\newtheorem{theorem}{Theorem}[section]
\newtheorem{lemma}[theorem]{Lemma}
\newtheorem{proposition}[theorem]{Proposition}
\newtheorem{corollary}[theorem]{Corollary}
\theoremstyle{definition}
\newtheorem{remark}[theorem]{Remark}
\newtheorem{construction}[theorem]{Construction}
\def\dfn#1{{\em #1}}
\DeclareMathOperator{\tb}{tb}
\DeclareMathOperator{\rot}{r}
\renewcommand{\sl}{\ensuremath{{\, sl}}}
\newcommand{\N}{\ensuremath{\mathbb{N}}}
\newcommand{\R}{\ensuremath{\mathbb{R}}}
\newcommand{\Z}{\ensuremath{\mathbb{Z}}}
\newcommand{\K}{\ensuremath{\mathcal{K}}}
\renewcommand{\L}{\ensuremath{\mathcal{L}}}
\numberwithin{equation}{section}
\title[Cables of positive torus knots]{Legendrian and transverse\\ cables of positive torus knots}
\author{John B. Etnyre}
\address{School of Mathematics \\ Georgia Institute of Technology}
\email{etnyre@math.gatech.edu}
\urladdr{\href{http://www.math.gatech.edu/~etnyre}{http://www.math.gatech.edu/\~{}etnyre}}
\author{Douglas J. LaFountain}
\address{Centre for Quantum Geometry of Moduli Spaces \\ Aarhus University}
\email{dlafount@imf.au.dk}
\urladdr{\href{http://pure.au.dk/portal/en/dlafount@imf.au.dk}{http://pure.au.dk/portal/en/dlafount@imf.au.dk}}
\author{B\"{u}lent Tosun}
\address{School of Mathematics \\ Georgia Institute of Technology}
\email{btosun3@math.gatech.edu}
\urladdr{\href{http://www.math.gatech.edu/users/btosun3}{http://www.math.gatech.edu/users/btosun3}}
\begin{document}

\maketitle

\begin{abstract}
In this paper we classify Legendrian and transverse knots
in the knot types obtained from positive torus knots by cabling. This
classification allows us to demonstrate several new
phenomena. Specifically, we show there are knot types that have
non-destabilizable Legendrian representatives whose Thurston-Bennequin
invariant is arbitrarily far from maximal. We also exhibit Legendrian
knots requiring arbitrarily many stabilizations before they become
Legendrian isotopic. Similar new phenomena are observed for transverse
knots. To achieve these results we define and study ``partially
thickenable" tori, which allow us to completely classify solid tori
representing positive torus knots.
\end{abstract}

 \section{Introduction}

An $(r,s)$-curve on the boundary of a
solid torus refers to the curve $s[\lambda]+r[\mu]$, where
$\lambda,\mu$ is the longitude-meridian basis for the homology of the
torus, and we denote this by the fraction $\frac sr.$ The $(r,s)$-cable of
a knot type $\mathcal K$, denoted $\mathcal K_{(r,s)}$, is the knot type 
obtained by taking the $(r,s)$-curve on the boundary of a tubular 
neighborhood of a representative of $\mathcal{K}$. Let 
$\mathcal K$ be a positive $(p,q)$-torus knot, where we may assume
$q > p > 1$ and $\gcd{(p,q)}=1$ and let $\mathcal K_{(r,s)}$ be its
$(r,s)$-cable, also with $\gcd{(r,s)}=1$. This paper concerns the
classification of Legendrian and transverse knots representing 
$\mathcal K_{(r,s)}$ and solid tori representing $\mathcal K$. 
Though the proofs of our
classification results are heavily dependent on the ambient 
contact manifold being $(S^3, \xi_{std})$, all the Legendrian and
transversal classification results hold in any tight contact manifold, 
as can be seen by consulting \cite{EtnyreHonda03}.

Studying Legendrian and transverse knots in cabled knot types has been
very fruitful. For example, in \cite{BakerEtnyreVanHornMorris10} 
cabling was used to better understand open book decompositions of 
contact structures; in particular, leading to non-positive monodromy 
maps supporting Stein fillable contact structures, monoids in the mapping
class group associated to contact geometry and procedures to construct
open books on manifolds after allowable transverse surgery (from an 
open book for the original contact manifold). Moreover, 
the first classification of a non-transversely simple
knot type was done in \cite{EtnyreHonda05} for the $(2,3)$-cable of
the $(2,3)$-torus knot. In that paper it was also shown that studying
solid tori with convex boundary that represent a given knot type (that
is, their core curves are in a given knot type) is key to
understanding cables; such an analysis for solid tori representing
negative torus knots yielded simple Legendrian and transverse
classifications for cables of negative torus knots. Tori representing
iterated cables of torus knots were further studied in
\cite{LaFountain1, LaFountain2} as well as \cite{Tosun??}. Building on
these works we completely classify embeddings of solid
tori representing positive torus knots and use this to give a complete 
classification of Legendrian and transverse knots in the knot types of 
cables of positive torus knots.
 
Before discussing the technical classification results we state
qualitative versions that demonstrate new phenomena in the geography
of Legendrian knots. We begin with some notation. Given a topological
knot type $\K$ and integers $t$ and $r$ we denote by $\mathcal{L}(\K)$
the set of Legendrian knots (up to Legendrian isotopy) topologically
isotopic to $\K$ and by
\[ \mathcal{L}_{(r,t)}(\K)=\{L\in\mathcal{L}(\K): \tb(L)=t \text{ and
} \rot(L)=r\}.
\] We similarly denote the set of transverse knots isotopic to $\K$ by
$\mathcal{T}(\K)$ and the ones having self-linking number $s$ by
$\mathcal{T}_s(\K)$.

We first consider cables of the right handed trefoil, that is, the
$(2,3)$-torus knot.

\begin{theorem}\label{qual1} 
Let $\K$ be the positive trefoil knot in $S^3.$ The knot $\K_{(r,s)}$
formed by $(r,s)$-cabling $\K$ is Legendrian simple if and only if
$\frac sr \not\in(1,\infty).$ Furthermore, given positive integers
$k$, $m$, and $n$, where $n > 1$ and $\gcd{(k,m)}=1$, there exists a
slope $\frac sr \in (1,\infty)$ such that $\mathcal{L}_{(u,t)}(\K_{(r,
s)})$ contains $n$ Legendrian knots for some pair of integers $(u,t)$
with $t= \overline{\tb}(\K_{(r, s)})-m$; moreover, one of these does
not destabilize, and they remain distinct when stabilized fewer than
$k$ times (and there are $k$ stabilizations that will make them
isotopic).
\end{theorem}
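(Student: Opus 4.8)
The plan is to deduce Theorem~\ref{qual1} from the classification of Legendrian representatives of $\K_{(r,s)}$, which in turn rests on the classification of solid tori representing the positive trefoil. Following the strategy of \cite{EtnyreHonda05}, every Legendrian representative of a cable $\K_{(r,s)}$ may be isotoped to a Legendrian divide or ruling curve on the convex boundary of a solid torus $N$ whose core is a Legendrian positive trefoil; conversely the Legendrian isotopy class of the cable is determined by the isotopy class of the pair $(N,\partial N)$ together with the slope of the cabling curve, and $\tb$ of the cable is read off from the dividing slope on $\partial N$ (equivalently, from the ``thickness'' of $N$). Thus the first step is to translate the classification of such solid tori directly into a classification of cables, so that the qualitative statements become bookkeeping about the possible tori.

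For the simplicity dichotomy I would use the contrast with negative torus knots. There every solid torus is uniformly thick, which forces $(\tb,\rot)$ to be a complete invariant. For the positive trefoil the new feature, established by the partial-thickenability analysis, is that solid tori occur in several distinct maximal (non-thickenable) isotopy classes. When $\frac sr\notin(1,\infty)$ the cabling slope falls in the range where any cable can be pushed onto a standard, uniformly thick neighborhood, so the classification collapses and $\K_{(r,s)}$ is Legendrian simple. When $\frac sr\in(1,\infty)$ the cabling curve is forced onto the boundary of a non-thickenable torus, and two cables lying on non-isotopic such tori remain non-isotopic even when they share the same $(\tb,\rot)$; reading this off the solid-torus classification gives the failure of simplicity, and hence the ``if and only if''.

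For the quantitative statement I would tune the slope $\frac sr\in(1,\infty)$ as a function of $k$, $m$, and $n$. The number of non-thickenable tori carrying a cable of a fixed Thurston--Bennequin invariant increases with the slope, so I would choose $\frac sr$ large enough that at the level $t=\overline{\tb}(\K_{(r,s)})-m$ there are exactly $n$ such tori, producing $n$ Legendrian cables sharing a common pair $(u,t)$; here the coprimality hypothesis $\gcd(k,m)=1$ is what lets me realize the required slope by a genuine cable with $\gcd(r,s)=1$. The stabilization behavior is then governed by the ``mountain range'' of the classification: a stabilization of a cable corresponds to passing to a thinner solid torus, and two of the $n$ representatives become Legendrian isotopic exactly when enough stabilizations push them into a common solid torus. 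I would arrange the slope so that this first common thickening occurs after precisely $k$ stabilizations, yielding distinctness under fewer than $k$ stabilizations and coincidence after $k$; and one chosen representative sits on a genuinely maximal torus at the top of its branch, so it does not destabilize.

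The hard part will be the solid-torus classification that underlies all of this, together with the sharp lower bound in the counting statement. Having $n$ maximal tori at a given level does not by itself guarantee that the $n$ cables stay distinct through $k-1$ stabilizations: I must rule out any unexpected isotopy that merges two branches prematurely, which requires the full convex-surface and Farey-graph analysis of exactly which tori admit a common thickening and of how many basic slices (equivalently, bypasses, equivalently stabilizations) separate them. Matching the three independent parameters $(k,m,n)$ to a single rational cabling slope is the combinatorial and number-theoretic crux, and the non-thickenable tori for the positive trefoil are precisely the ingredient that makes such a configuration possible.
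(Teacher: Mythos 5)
Your proposal follows essentially the same route as the paper: Theorem~\ref{qual1} is deduced directly from the full Legendrian classification of trefoil cables (Theorem~\ref{main23}, proved via Proposition~\ref{class23}), which itself rests on the classification of non-thickenable and partially thickenable solid tori, and the quantitative statement is obtained by tuning a slope in $(n-1,n)$ to the data $k,m,n$. The only piece you leave implicit is the paper's explicit choice $\frac sr=\frac{kn+m(n-1)}{k+m}$, for which $\gcd(r,s)=\gcd(k+m,m)=\gcd(k,m)=1$, $\left|rn-s\right|=m$ and $r-\left|rn-s\right|-1=k-1$, so that the $n-1$ maximal-$\tb$ knots stabilized $m$ times together with the non-destabilizable ruling curve $K_+$ give exactly the $n$ representatives, and the constants $m$ and $k$ drop out of items (3), (5) and (6) of the classification precisely as you describe.
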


\begin{remark} 
This theorem gives the first example of a knot type with
non-destabilizable Legendrian knots with Thurston-Bennequin invariant
arbitrarily far from the maximal Thurston-Bennequin invariant.  We
note that in \cite{EtnyreNgVertesi} it was shown there are knot types
that have arbitrarily many Legendrian knots with fixed classical
invariants, so the above theorem gives only the second family of knots
known to have this property. We also observe that this theorem gives
the first set of Legendrian knots with the same invariants that
requires arbitrarily many stabilizations before becoming Legendrian
isotopic.
\end{remark}

\begin{theorem}\label{qual2}
Let $\K$ be the positive trefoil knot in $S^3.$ The knot $\K_{(r,s)}$
formed by $(r,s)$-cabling $\K$ is transversely simple if and only if
$\frac sr \not\in(1,\infty).$ Furthermore, given positive integers
$k$, $m$, and $n$, where $n > 2$ and $\gcd{(k,m)}=1$, let
$p=k(n-1)+m(n-2)$.  Then there is some $\frac sr\in (1,\infty)$ such
that $\mathcal{T}(\K_{(r,s)})$ contains $(n-1)$ distinct transverse
knots with $sl = \overline{sl}(\K_{(r,s)}) - 2p$, of which $(n-2)$ are
non-destabilizable, and such that there is another non-destabilizable
knot with $sl = \overline{sl}(\K_{(r,s)}) - 2(p+m)$. Moreover, these
non-destabilizable knots must be stabilized until their self-linking
number is $\overline{sl}(\K_{(r,s)}) -2(p+m+k)$ before they become
transversely isotopic.
\end{theorem}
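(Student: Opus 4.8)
The plan is to obtain Theorem~\ref{qual2} as the transverse counterpart of the Legendrian classification of $\K_{(r,s)}$ proved in the body of the paper. The bridge is the standard correspondence that transverse isotopy classes of a knot type are exactly Legendrian isotopy classes modulo negative stabilization, together with the fact that the positive transverse push-off of a Legendrian knot $L$ with $\tb(L)=t$ and $\rot(L)=u$ has $sl=t-u$. Under this dictionary a negative stabilization preserves $t-u$ while a positive stabilization lowers it by $2$, so transverse stabilization corresponds to positive Legendrian stabilization and drops $sl$ by $2$. Consequently the transverse classification is read off from the Legendrian mountain range by collapsing the negative-stabilization edges and recording, at each value of $sl$, the resulting transverse classes and their images under positive (transverse) stabilization; a transverse knot is non-destabilizable precisely when no Legendrian representative of strictly larger $t-u$ positively stabilizes into its negative-stabilization class.

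For the ``if and only if'' statement I would invoke the solid-torus classification directly. When $\frac{s}{r}\notin(1,\infty)$ the classification provides a unique Legendrian (hence transverse) knot at each realizable value of the invariants, and transverse simplicity is immediate. When $\frac{s}{r}\in(1,\infty)$ the partially thickenable tori furnished by our analysis produce Legendrian representatives that are pairwise non-isotopic while sharing a common value of $t-u$; these descend to distinct transverse knots of equal $sl$, so $\K_{(r,s)}$ is not transversely simple.

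The quantitative assertion then becomes a matter of tuning the cabling slope so that the transverse mountain range has the prescribed profile. Given $k,m,n$ with $n>2$ and $\gcd(k,m)=1$, I would choose $\frac{s}{r}\in(1,\infty)$ for which the classification produces a family of distinct transverse representatives carried by the distinct partially thickenable tori, whose self-linking numbers and destabilization depths are governed by the combinatorial data recorded by those tori. The definition $p=k(n-1)+m(n-2)$ is exactly the bookkeeping that places $n-1$ of these representatives at the common level $sl=\overline{sl}(\K_{(r,s)})-2p$, of which $n-2$ are non-destabilizable and one is a transverse stabilization of a representative of higher $sl$, while a further non-destabilizable representative appears at $sl=\overline{sl}(\K_{(r,s)})-2(p+m)$. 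Finally, comparing the stabilization chains of these representatives in the transverse mountain range shows that they remain pairwise distinct until their self-linking number is lowered to $\overline{sl}(\K_{(r,s)})-2(p+m+k)$, at which point they become transversely isotopic.

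The main obstacle is not geometric---the classification and the existence of partially thickenable tori are assumed---but lies in the arithmetic bookkeeping: one must extract from the classification the exact value of $\overline{sl}(\K_{(r,s)})$ and the precise self-linking levels at which each transverse representative first appears and at which distinct representatives merge, and then verify that the single slope parameter $\frac{s}{r}$ can be chosen to realize simultaneously the relation $p=k(n-1)+m(n-2)$ and the merge depth $k$. Care is also needed at the Legendrian-to-transverse passage, since distinct Legendrian peaks can share a transverse class; keeping track of which Legendrian representatives become negative-stabilization equivalent is what pins down the count $(n-1)$ at the top level and isolates the single extra non-destabilizable knot below it.
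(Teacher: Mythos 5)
Your overall route is exactly the paper's: Theorem~\ref{tequivaletsl} converts the Legendrian classification (Theorem~\ref{main23}/Proposition~\ref{class23}) into the transverse one (Theorem~\ref{main23t}), using $\sl(L_+)=\tb(L)-\rot(L)$ so that negative stabilization is invisible and positive stabilization drops $\sl$ by $2$; the paper then simply says that ``choices similar to those in the previous proof'' of Theorem~\ref{qual1} give the quantitative statement. So there is no divergence of method.

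However, the one substantive step of the proof is precisely the piece you defer as ``arithmetic bookkeeping,'' namely exhibiting a slope that simultaneously realizes all the prescribed levels, and you should carry it out: take $\frac sr=\frac{kn+m(n-1)}{k+m}$, i.e.\ $s=kn+m(n-1)$ and $r=k+m$. Then $\gcd(r,s)=\gcd(k+m,k)=\gcd(m,k)=1$, and $\frac sr-(n-1)=\frac{k}{k+m}\in(0,1)$, so $\frac sr\in(n-1,n)$ and Theorem~\ref{main23t} applies with its integer parameter equal to $n-1$. One checks $s-r=k(n-1)+m(n-2)=p$, so the level $rs+r-s$ of that theorem is $\overline{\sl}(\K_{(r,s)})-2p$ and carries the $(n-1)-1=n-2$ non-destabilizable transverse knots together with the $p$-fold stabilization of the unique maximal one, giving $n-1$ knots in all (distinct there since $rs+r-s>rs-s-r$); further $|nr-s|=m$, so the extra non-destabilizable knot sits at $\overline{\sl}(\K_{(r,s)})-2(p+m)$; and $s=p+m+k$, so the merging level $rs-s-r=\overline{\sl}(\K_{(r,s)})-2s$ is $\overline{\sl}(\K_{(r,s)})-2(p+m+k)$. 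Without displaying such a slope and verifying that these constraints are simultaneously satisfiable (which is not a priori obvious, since a single rational parameter must hit three prescribed quantities), the argument is only a plan; with it, your proof agrees with the paper's.
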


\begin{remark}
In \cite{EtnyreNgVertesi} it was
shown that there are knot types, specifically certain twist knots,
that have arbitrarily many transverse knots with the same self-linking
number. The above theorem also gives such examples but, in addition,
demonstrates three new phenomena concerning transverse
knots that were not previously known.  
Specifically it gives the first example of knot types that
have transverse knots with the same self-linking number that require
arbitrarily many stabilizations before they become transversely
isotopic, and it also gives the first examples where there are
non-destabilizable transverse knots whose self-linking number is
arbitrarily far from maximal. Finally, the theorem also gives the
first knot type where there are non-destabilizable knots with distinct
self-linking numbers.
\end{remark}

With all the interesting and complicated behavior exhibited by cables
of the right handed trefoil knot, one would expect to see behavior at
least as complicated for cables of other positive torus
knots. Surprisingly, cables of such knots turn out to be relatively
simple.

\begin{theorem}\label{qual3}
Let $\K$ be a positive $(p,q)$-torus knot with $(p,q)\not=(2,3)$. Then
for any rational number $\frac sr$ and any $(u,t)$ with $t+u$ odd,
there are at most 3 Legendrian knots in $\mathcal{L}_{(u,t)}(\K_{(r,
s)})$ and at most 2 for all but one pair $(u,t)$.
\end{theorem}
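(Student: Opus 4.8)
The plan is to reduce the Legendrian classification of $\K_{(r,s)}$ to the classification of solid tori representing $\K$ established above, and then to count ruling curves of the cabling slope on their convex boundaries. First I would realize an arbitrary $L\in\mathcal{L}_{(u,t)}(\K_{(r,s)})$ as a Legendrian ruling curve on the convex boundary $T=\partial N$ of a solid torus $N$ whose core is a Legendrian $\K$: taking a standard neighborhood, making $\partial N$ convex, and passing to the ruling of slope $\frac sr$ shows every representative of the cable type arises this way. The dividing set of $T$ is $2n$ parallel curves of some slope $\sigma$ drawn from the finite list of boundary slopes permitted by the solid-torus classification, and the standard cabling formulas express $(\tb(L),\rot(L))=(t,u)$ in terms of $n$, $\sigma$, $\frac sr$, and the relative Euler data of $N$. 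Thus fixing $(u,t)$ restricts the admissible pairs $(N,T)$ to a short list, and on each such $T$ the ruling curves of slope $\frac sr$ form a single Legendrian isotopy class, since they are isotopic through the ruling. The hypothesis that $t+u$ be odd is automatic rather than restrictive—$sl$ is always odd for a knot in $S^3$, and $\tb\pm\rot$ are the self-linking numbers of the two transverse pushoffs—so no achievable invariant pair is excluded.

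Next I would organize the count by maximal Thurston–Bennequin invariant. The representatives with $t=\overline{\tb}(\K_{(r,s)})$ sit on the thickest tori allowed by the classification, and the purpose of the partially thickenable tori introduced above is precisely to enumerate these. The key input is that for a positive $(p,q)$-torus knot with $(p,q)\ne(2,3)$ the non-thickenable and partially thickenable tori at the relevant slope number at most three, whereas for the trefoil this count is unbounded—which is exactly what drives Theorems \ref{qual1} and \ref{qual2}. Comparing the contact structures on the complementary solid tori, and using that ruling curves on boundaries of tori which thicken into a common torus are Legendrian isotopic, collapses the top-level count to at most three classes, and to at most two except at the single slope/level at which all three partially thickenable models are simultaneously realized.

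Finally I would propagate the count downward by destabilization. For $L$ with non-maximal $t$, the Imbalance Principle produces a bypass on a sufficiently thick $T$ that destabilizes $L$, and iterating shows every representative is a stabilization of one of the top classes. Because $t+u$ is odd the stabilization tree is the standard one, so the number of classes at a given $(u,t)$ is governed by the number of top classes lying above it. Here one checks that two of the three top classes become Legendrian isotopic after a single stabilization: their underlying partially thickenable tori become genuinely thickenable once destabilized, identifying their rulings. Hence the three top classes collapse to at most two immediately below the exceptional pair, yielding at most two classes at every $(u,t)$ save the one distinguished pair, where there are three.

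The main obstacle is the second step: proving that for $(p,q)\ne(2,3)$ the partially thickenable tori contribute at most three—and generically at most two—distinct Legendrian classes, and that \emph{exactly one} invariant pair sees all three. This rests on the fine structure of the solid-torus classification, namely controlling which partially thickenable tori thicken into one another (so that their ruling curves are Legendrian isotopic) versus which remain genuinely non-thickenable (keeping their rulings distinct), together with the merging-after-one-stabilization phenomenon that caps the generic count at two. I expect the delicate bypass bookkeeping on these tori, rather than the cabling formulas for $(\tb,\rot)$ or the routine destabilization argument, to be where the real work lies, and to be the precise place where the hypothesis $(p,q)\ne(2,3)$ is used.
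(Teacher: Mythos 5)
Your skeleton---realize every representative of $\K_{(r,s)}$ on the convex boundary of a solid torus representing $\K$, count classes at the maximal Thurston--Bennequin level, and propagate downward by destabilization---is indeed the strategy of the paper, but the paper deploys it to prove the full classification (Theorem~\ref{main}, via Propositions~\ref{prop:classify1} and~\ref{prop:classify2} and the results of Section~\ref{Classification of solid tori}) and then reads Theorem~\ref{qual3} off from that statement. The two quantitative claims your count actually rests on are both false. First, it is not true that ``the non-thickenable and partially thickenable tori at the relevant slope number at most three'': by Corollary~\ref{cor:tori}, at a slope in $I_k$ there are exactly $2(pq-p-q-n+1)+2$ solid tori with two dividing curves, which is at least $4$ for every $(p,q)\neq(2,3)$. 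The bound of $3$ classes per pair $(u,t)$ arises by a different mechanism: all the tori that thicken to $N_1$ produce Legendrian knots (the $L_i$) that are separated by rotation number at the top and whose stabilizations become isotopic as soon as their classical invariants agree, so that entire family contributes at most \emph{one} class per $(u,t)$; only the two tori $N_n^{\pm}$ contribute the extra classes $K_{\pm}$, giving $1+1+1=3$. Establishing both halves of this---that the $L_i$-family collapses and that stabilizations of $K_\pm$ stay off of it---requires Propositions~\ref{curvesonpt1} and~\ref{curvesonpt2}, which control exactly which convex tori a stabilized Legendrian divide or ruling curve can sit on; nothing in your sketch supplies this.

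Second, your mechanism for the ``at most $2$ for all but one pair'' clause---that two of the three top classes merge after a \emph{single} stabilization because their tori become thickenable---is wrong and would contradict the headline results of the paper. The knots $S_\pm^xS_\mp^y(K_\pm)$ remain distinct from all other classes for every $x\le c$, where $c=(\frac sr\cdot e_n^a)-1$ or $c=(\frac sr\cdot e_n^a-\frac sr\cdot e_n)-1$, and $c$ can be made arbitrarily large by choosing $\frac sr$ deep in the interval of influence; this persistence is precisely what yields transverse knots requiring arbitrarily many stabilizations in Theorem~\ref{qual4}. Relatedly, the unique exceptional pair $(u,t)$ is not at the top level ``where all three models are simultaneously realized'': at $\tb=rs$ the maximal representatives are distinguished by rotation number, so each pair there carries at most two classes. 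The triple coincidence, when it occurs, happens strictly below the peaks, at the pair where an allowed stabilization of $K_+$, an allowed stabilization of $K_-$, and a stabilization of some $L_i$ first share both invariants; locating it and showing it is unique requires the explicit values of $\rot(K_\pm)$, $\tb(K_\pm)$ and $c$ from Propositions~\ref{prop:classify1} and~\ref{prop:classify2}, not the qualitative picture you describe.
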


\begin{theorem}\label{qual4}
Let $\K$ be a positive $(p,q)$-torus knot with $(p,q)\not=(2,3)$. Then
for any rational number $\frac sr$ there are at most two transverse
knots isotopic to the $(r,s)$-cable of $\K$ with the same self-linking
number. However, for any positive integers $n$ and $m$ with
$\gcd{(m,n)}=1$, there is a rational number $\frac sr>0$ for which
there is a non-destabilizable transverse knot with self-linking number
at most $\overline{\sl}(\K_{(r,s)})-2n$ and it must be stabilized
exactly $m$ times to become isotopic to the destabilizable transverse
knot with the same self-linking number.
\end{theorem}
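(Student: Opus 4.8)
The plan is to derive both halves of the statement from the classification of transverse knots in $\K_{(r,s)}$, which itself rests on the classification of solid tori representing $\K$. The key reduction is that every transverse representative of $\K_{(r,s)}$ can be isotoped to lie as a transverse $(r,s)$-curve on the convex boundary of a solid torus $N$ whose core is a representative of $\K$, so that its self-linking number is determined by $(r,s)$ together with the contact framing on $\partial N$, i.e.\ by how far $N$ can be thickened. I would first record the transverse--Legendrian correspondence: transverse knots isotopic to $\K_{(r,s)}$ with $\sl=s$ are in bijection with Legendrian knots with $\tb-\rot=s$ modulo negative stabilization, with transverse stabilization corresponding to positive Legendrian stabilization (so each drops $\sl$ by $2$). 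Since $\tb+\rot$ is odd for any knot in $S^3$, Theorem~\ref{qual3} applies along every diagonal $\tb-\rot=s$.

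For the upper bound, the conceptual point is that the solid tori representing $\K$ come in essentially two kinds relevant to a fixed self-linking level: those that are thickenable, which all enlarge to one standard maximal neighborhood and hence carry a single (destabilizable) transverse $(r,s)$-cable, and the partially thickenable ones, which contribute at most one more. Formally, I would translate Theorem~\ref{qual3} through the correspondence above: along each diagonal $\tb-\rot=s$ the Legendrian knots that are not negative stabilizations fall, by the mountain-range structure underlying the Legendrian classification, into at most two classes under ``sharing a common negative stabilization,'' giving at most two transverse knots with self-linking number $s$.

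For the existence statement I would produce the slope explicitly. Working among the positive slopes $\frac sr$ at which partially thickenable tori occur, I would choose $\frac sr$ so that $\K$ is represented by a partially thickenable solid torus sitting at a prescribed twisting number, tuned via $n$ so that the $(r,s)$-cable it carries is transverse with $\sl\le\overline{\sl}(\K_{(r,s)})-2n$ and is non-destabilizable --- non-destabilizability being exactly the failure of this torus to thicken. The companion destabilizable knot at the same level is the cable on the standard thickenable neighborhood. Transverse stabilization of the non-destabilizable knot corresponds to thickening its torus one step at a time; I would show from the solid-torus classification that after exactly $m$ steps the torus becomes thickenable and the two knots coincide, while fewer stabilizations keep them distinct, so selecting the slope whose ``gap to thickenability'' is $m$ yields the assertion.

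The principal obstacle is the existence part, and in particular pinning down the exact number $m$. This needs the full classification of solid tori representing positive torus knots: the precise set of twisting numbers at which partially thickenable tori exist, and the exact depth at which each is finally thickened by a bypass. Extracting a clean value $m$ (rather than merely ``some number of stabilizations'') while simultaneously forcing the self-linking offset past $2n$ is where the careful bookkeeping with the twisting/continued-fraction data of $\frac sr$ must be carried out; by contrast the upper bound is comparatively formal once the Legendrian classification and the transverse--Legendrian correspondence are in hand.
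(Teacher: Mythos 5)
Your first half is fine and follows the paper's own route: Theorem~\ref{tequivaletsl} turns Theorem~\ref{main} into Theorem~\ref{maint}, whose structure (a unique maximal transverse knot $T$, a unique non-destabilizable $T'$, and everything else destabilizing to one of these two towers) immediately gives at most two transverse knots at each self-linking level. But the existence half of your argument has a genuine gap: you correctly identify that one must \emph{simultaneously} force the self-linking offset of the non-destabilizable knot past $2n$ and make the number of stabilizations-to-coincidence exactly $m$, and then you declare this "the principal obstacle ... where the careful bookkeeping must be carried out" without carrying it out. That bookkeeping is the entire content of the second assertion of the theorem; without producing a slope that satisfies both constraints at once, nothing is proved. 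Your heuristic that transverse stabilization "thickens the torus one step at a time" until it becomes thickenable is also not the actual mechanism: by Propositions~\ref{curvesonpt1} and~\ref{curvesonpt2} the relevant count is the intersection number $\frac sr\cdot e_k^a$ (or $\frac sr\cdot e_k^a-\frac sr\cdot e_k$), i.e.\ the number of \emph{positive} stabilizations after which the knot can finally be placed on a torus of slope $e_k^a$ and hence inside $N_1$; the torus does not thicken in discrete increments indexed by stabilizations.

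The missing idea is a single explicit choice. Take $k\in\mathcal{I}$ (e.g.\ $k=2$, noting $pq-p-q$ is odd) and set $\frac sr=m\,e_k+n\,e_k^a$, where the sum is Farey addition (add numerators and add denominators of $e_k=\frac{k}{pq-p-q}$ and $e_k^a=\frac cd$). Since $\gcd(m,n)=1$ and $e_k,e_k^a$ are Farey neighbors, this fraction is in lowest terms and lies in $[e_k,e_k^a)$. Because $\left|e_k\cdot e_k^a\right|=1$, bilinearity of the intersection pairing gives $\left|\frac sr\cdot e_k^a\right|=m$ exactly, which by Theorem~\ref{maint} is the precise number of stabilizations after which $T'$ merges with the stabilizations of $T$; and a direct computation gives $\frac sr\cdot\frac1{pq-p-q}=s(pq-p-q)-r>n$, so $\sl(T')=\overline{\sl}(\K_{(r,s)})-2\left(s(pq-p-q)-r\right)\le\overline{\sl}(\K_{(r,s)})-2n$. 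This one line of Farey arithmetic is what your proposal needs to close the argument.
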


As indicated above the key to proving these classification results is
classifying solid tori with convex boundary realizing positive torus
knots. This classification, discussed below, is the first complete
such classification and exhibits features not seen before, such as the
existence of partially thickenable tori (see Subsection~\ref{Solid
tori}).

In the next two subsections we state the precise classification
theorems that lead to the above qualitative results.  In
Subsection~\ref{knotclassifications} we state knot classification
theorems for cables; in Subsection~\ref{Solid tori} we state
classification theorems for embeddings of solid tori.

\subsection{Classification results for cable knots} 
\label{knotclassifications}

We begin with cables of the right handed trefoil knot. 

\begin{theorem}\label{main23}
Let $\K$ be the $(2,3)$-torus knot. Then the $(r,s)$-cable of $\K$,
$\K_{(r,s)}$, is Legendrian simple if and only if $\frac sr \not\in
(1,\infty)$, and the classification of Legendrian knots in the knot
type $\K_{(r,s)}$ is given as follows.
\medskip
\begin{enumerate}
\item If $\frac sr \in (0,1]$ then there is a unique Legendrian knot
$L\in\mathcal{L}(\K_{(r,s)})$ with Thurston-Bennequin invariant
$\tb(L)=rs+s-r$ and rotation number $r(L)=0.$ All others are
stabilizations of $L.$
\medskip
\item If $\frac sr<0$, then the maximal Thurston-Bennequin invariant
for a Legendrian knot in $\mathcal{L}(\K_{(r,s)})$ is $rs$ and the
rotation numbers realized by Legendrian knots with this
Thurston-Bennequin invariant are
 
\[\{\pm (r + s(n+k)) \mbox{ } | \mbox{ } k=(1+n), (1+n)-2, \ldots, -(1+n) \},\]
where $n$ is the integer that satisfies
$$-n-1 < {\frac rs}< -n.$$
All other Legendrian knots $L\in\mathcal{L}(\K_{(r,s)})$ are
stabilizations of these. Two Legendrian knots with the same $\tb$ and
$\rot$ are Legendrian isotopic.
\medskip 
\item Suppose $\frac sr \in [n, n+1)$ for a positive integer $n$; then
$\K_{(r,s)}$ is not Legendrian simple and has the following
classification (see also Figure~\ref{fig:range}).
\smallskip
\begin{enumerate}
\item The maximal Thurston-Bennequin number is
$\overline{\tb}(\K_{(r,s)})=rs$.
\item There are $n$ Legendrian knots $L_\pm^j\in
\mathcal{L}(\K_{(r,s)}), j=1,\ldots, n$, with
\[
\tb(L_\pm^j)=rs\,\, \text{ and } \rot(L_\pm^j)=\pm(s-r).
\]
\item If $\frac sr \not=n$ then there are two Legendrian knots
$K_\pm\in \mathcal{L}(\K_{(r,s)})$ that do not destabilize but have
\[
\tb(K_\pm)=rs- \left|r(n+1)-s\right| \,\, \text{ and }  \rot(K_\pm)=\pm \left(s-r+ \left| r(n+1)-s\right|\right).
\]
\item All Legendrian knots in $\mathcal{L}(\K_{(r,s)})$ destabilize to
one of the $L^j_\pm$ or $K_\pm$.
\item\label{item511a} Let $c=r-1$.  For any $y\in\N \cup
\left\{0\right\}$ and $x\leq c$ the Legendrian
$S_\pm^xS_\mp^y(L^j_\pm)$ is not isotopic to a stabilization of any of
the other $L_\pm^{j}$'s the $L_\mp^j$, $K_\pm$ or $K_\mp$.
\item\label{item512a} Let $c'=r-\left|r(n+1)-s\right|-1$. For any
$y\in\N \cup \left\{0\right\}$ and $x\leq c'$ the Legendrian
$S_\pm^xS_\mp^y(K_\pm)$ is not isotopic to a stabilization of any of
the $L_\pm^{j}$'s or $K_\mp$.
\item Any two stabilizations of the $L_\pm^j$ or $K_\pm$, except those
mentioned in item~(\ref{item511a}) and~(\ref{item512a}), are
Legendrian isotopic if they have the same $\tb$ and $\rot$.
\end{enumerate}
\end{enumerate}
\end{theorem}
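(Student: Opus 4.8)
The plan is to deduce this classification from the classification of solid tori with convex boundary whose core realizes $\K$, given in Subsection~\ref{Solid tori}; this is the main input and the whole argument is organized around it. Following the strategy of \cite{EtnyreHonda05}, I would first show that every Legendrian representative $L$ of $\K_{(r,s)}$ may, after a contact isotopy, be assumed to be a Legendrian ruling curve of slope $\frac{s}{r}$ on the convex boundary $\partial N$ of a solid torus $N$ representing $\K$ (a Legendrian divide in the degenerate case $\frac{s}{r}=s_0$, where $s_0$ is the slope of the dividing set of $\partial N$). Thus $\mathcal{L}(\K_{(r,s)})$ is controlled by which tori $N$ occur, which boundary slopes their dividing sets realize, and how ruling curves of slope $\frac{s}{r}$ sit on them, with two rulings representing Legendrian isotopic knots exactly when their tori can be made to agree after thickening.

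Next I would compute the classical invariants of a ruling curve $L$ of slope $\frac{s}{r}$ on a convex $\partial N$ with dividing slope $s_0$: the Thurston-Bennequin invariant is the surface (cabling) framing corrected by one-half the geometric intersection number of $L$ with the dividing set, while the rotation number is read off from the relative Euler class of $\xi$ over a surface bounded by $L$, computed additively from the signed basic slices building up $N$. Maximizing $\tb$ over all admissible tori and ruling slopes then yields $\overline{\tb}(\K_{(r,s)})$, giving the stated maxima---$rs+s-r$ in part (1), realized on a standard neighborhood of a maximal-$\tb$ Legendrian trefoil, and $rs$ in parts (2) and (3)---together with the accompanying rotation numbers $\rot(L_\pm^j)=\pm(s-r)$ of (3)(b).

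The case division on $\frac{s}{r}$ records which tori are forced to appear. When $\frac{s}{r}\le 1$, every solid torus carrying a ruling of slope $\frac{s}{r}$ thickens to the standard neighborhood of the maximal-$\tb$ Legendrian $\K$, and the finitely many sign choices in the intervening $T^2\times I$ layer discretize the rotation numbers as in parts (1) and (2); fixing $(\tb,\rot)$ then pins down these choices up to isotopy and proves Legendrian simplicity in those ranges. When $\frac{s}{r}\in[n,n+1)$ with $n\ge 1$ the partially thickenable tori of Subsection~\ref{Solid tori} intervene: these carry a ruling of slope $\frac{s}{r}$ but cannot be thickened past an obstruction slope, and it is exactly these that produce the knots $K_\pm$ of (3)(c), with $\tb=rs-|r(n+1)-s|$, together with the failure of simplicity. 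Distinctness of the various $L_\pm^j$ and $K_\pm$ follows because they bound non-isotopic solid tori detected by the classification of Subsection~\ref{Solid tori}; in particular the $n$ knots $L_+^j$ share the invariants $\tb=rs$ and $\rot=s-r$ yet are separated by the tori they bound, which is the precise source of non-simplicity, and the non-destabilizability in (3)(c) is the statement that the underlying partially thickenable torus admits no further thickening.

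The main obstacle is the non-simple regime, items (3)(e)--(g): determining the exact number of stabilizations after which two of these knots become Legendrian isotopic. This demands understanding how a single stabilization $S_\pm$ alters the solid-torus picture---either flipping the sign of a basic slice or finally permitting a thickening---and then counting, via the relative Euler class, how many positive and negative stabilizations are required before a partially thickenable torus can be absorbed into a standard neighborhood. The bookkeeping that produces the precise thresholds $c=r-1$ and $c'=r-|r(n+1)-s|-1$, and the proof that below them the knots $S_\pm^xS_\mp^y(L^j_\pm)$ and $S_\pm^xS_\mp^y(K_\pm)$ remain mutually non-isotopic while at or beyond them they coincide with the appropriate stabilizations of the other representatives, is the delicate heart of the argument and is where the partial-thickenability analysis must be pushed hardest.
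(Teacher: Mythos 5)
Your outline follows the same route as the paper: reduce everything to the classification of solid tori representing $\K$, realize Legendrian representatives as divides or ruling curves on convex tori, compute $\tb$ from intersections with the dividing set and $\rot$ from relative Euler classes of the intervening basic slices, and let the non-thickenable and partially thickenable tori account for the failure of simplicity. Two corrections to the picture you paint. First, the knots $K_\pm$ are not produced by partially thickenable tori carrying a $\frac sr$-ruling: they are $(r,s)$-ruling curves on the \emph{non-thickenable} tori $\partial N_{n+1}^\pm$, whose dividing slope $n+1$ lies just above $\frac sr$; it is the $n$ knots $L^j_\pm$ (Legendrian divides on convex tori of slope $\frac sr$ sitting inside $N_j^\pm$ for $j=1,\dots,n$) that are carried by the partially thickenable tori. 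Second, and more seriously, the part you defer as ``the delicate heart of the argument'' is a genuine gap, not just bookkeeping. The relative Euler class computation you invoke does give the thresholds $c$ and $c'$ (it tells you how many positive stabilizations are needed before the knot can be pushed onto a torus of dividing slope $e_{j}^a=\infty$ and hence absorbed into $N_1$), but it does not by itself show that \emph{below} those thresholds the knots remain non-isotopic, nor that $K_\pm$ fails to destabilize. For that one must show that \emph{every} convex torus on which $S_\pm^xS_\mp^y(L^j_\pm)$ or $S_\pm^xS_\mp^y(K_\pm)$ sits bounds a partially or non-thickenable solid torus inside the relevant $N_j^\pm$ --- a statement about all ambient isotopies, not about one preferred torus. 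In the paper this is Propositions~\ref{curvesonpt1} and~\ref{curvesonpt2}, proved by Isotopy Discretization: one interpolates between two candidate tori by a chain of convex tori differing by single bypass attachments and checks inductively that each intermediate torus still bounds a partially (or non-) thickenable solid torus, using Lemma~\ref{lem:basicrange} and the Farey tessellation to rule out a jump of the dividing slope to $e_j^a$. Without an argument of this kind, items (3)(c) (non-destabilizability), (3)(e), (3)(f), and (3)(g) are asserted rather than proved, and in particular the claim that the $n$ knots $L_+^j$ with identical classical invariants are pairwise non-isotopic has no justification: knowing that they bound non-isotopic tori does not a priori prevent a Legendrian isotopy that moves one off its torus.
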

\begin{figure}[ht]
        \relabelbox \small  {\centerline{\epsfbox{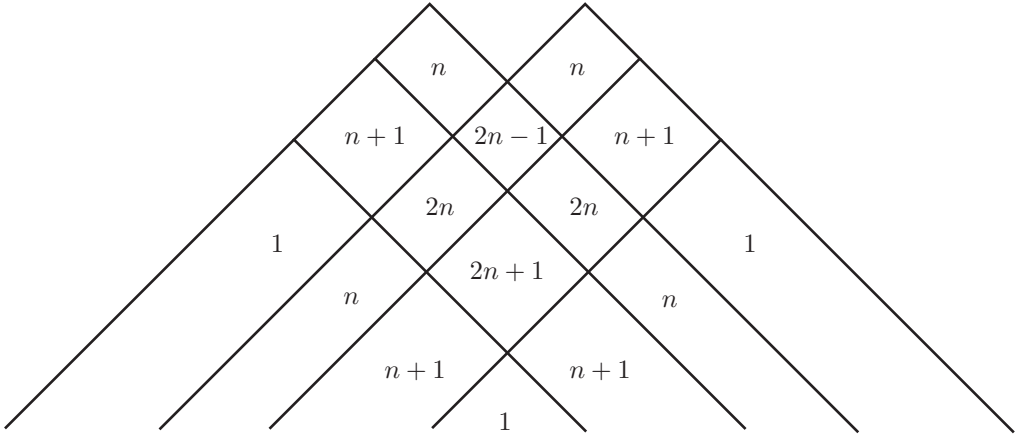}}}
  \relabel {1}{$n$}
  \relabel {2}{$n$}
  \relabel {3}{$n+1$}
    \relabel {4}{$2n-1$}
  \relabel {5}{$n+1$}
  \relabel {6}{$1$}
    \relabel {7}{$2n$}
  \relabel {8}{$2n$}
  \relabel {9}{$1$}
  \relabel{a}{$n$}
  \relabel{b}{$2n+1$}
    \relabel{c}{$n$}
  \relabel{d}{$n+1$}
    \relabel{e}{$n+1$}
  \relabel{f}{$1$}
  \endrelabelbox
        \caption{The image of $\L(\K_{(r,s)})\to \Z^2:L\mapsto
(\rot(L),\tb(L))$ for non-simple cablings of the positive trefoil with 
$\frac sr\in (n,n+1)$. The
number of Legendrian knots realizing each point in $\Z^2$ whose
coordinates sum to an odd number is indicated in the figure. The exact
width of each region is determined by Theorem~\ref{main23}.}
        \label{fig:range}
\end{figure}

The transverse classification is now an immediate corollary. 
\begin{theorem}\label{main23t}
Let $\K$ be the $(2,3)$-torus knot. If $\frac sr\not \in (1,\infty)$
then $\K_{(r,s)}$ is transversely simple and all transverse knots are
stabilizations of the one with maximal self-linking number $rs+s-r$.

If $\frac sr \in [n,n+1)$ for a positive integer $n$ then $\K_{(r,s)}$
is not transversely simple and has the following classification.
\begin{enumerate}
\item  The maximal self-linking number is $rs+s-r,$ and there is a
unique transverse knot in $\mathcal{T}(\K_{(r,s)})$ with this
self-linking number.
\item There are $n-1$ distinct transverse knots in
$\mathcal{T}(\K_{(r,s)})$ that do not destabilize and have
self-linking number $rs+r-s.$
\item If $\frac rs\not =n$ then there is a unique transverse knot in
$\mathcal{T}(\K_{(r,s)})$ that does not destabilize and has
self-linking number $rs+r-s-2|(n+1)r-s|$.
\item All other transverse knots in $\mathcal{T}(\K_{(r,s)})$
destabilize to one of the ones listed above.
\item None of the transverse knots listed above become transversely
isotopic until they have been stabilized to have self-linking number
$rs-s-r.$ There is a unique transverse knot in
$\mathcal{T}(\K_{(r,s)})$ with self-linking number less than or equal
to $rs-s-r.$
\end{enumerate}
\end{theorem}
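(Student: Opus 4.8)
The plan is to derive the transverse classification directly from the Legendrian classification of Theorem~\ref{main23}, by invoking the standard correspondence between transverse and Legendrian knots (see \cite{EtnyreHonda05} and the references therein). Recall that every transverse knot is the transverse push-off $T(L)$ of some Legendrian $L$, that $\sl(T(L))=\tb(L)-\rot(L)$, that $T(L)$ is unchanged under negative stabilization of $L$ while positive Legendrian stabilization corresponds to transverse stabilization, and that $T$ descends to a bijection between Legendrian knots modulo negative stabilization and transverse knots up to transverse isotopy. Thus the transverse classification is obtained by taking the Legendrian mountain range recorded in Theorem~\ref{main23}(3) (and Figure~\ref{fig:range}) and collapsing it along negative stabilization; a transverse knot fails to destabilize precisely when its negative-stabilization class contains no positive stabilization.

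First I would read off the self-linking numbers of the distinguished Legendrians using $\sl=\tb-\rot$: the knots $L_-^j$ and $K_-$ all have $\sl=rs+s-r$, the knots $L_+^j$ have $\sl=rs+r-s$, and $K_+$ has $\sl=rs+r-s-2|r(n+1)-s|$, which are exactly the three self-linking values in items (1)--(3). The key structural point is an asymmetry built into items (e)--(g): for $L_-^j$ and $K_-$ the push-off is taken in the negative-stabilization direction, which is the one bounded by $c=r-1$ (resp.\ $c'=r-|r(n+1)-s|-1$), so after at most $r$ negative stabilizations item (g) forces these knots to become Legendrian isotopic; hence their push-offs coincide and yield the single transverse knot of maximal self-linking $rs+s-r$ of item (1). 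Since all of these sit at the maximal Thurston--Bennequin number $rs$, none is a positive stabilization, so this maximal transverse knot is non-destabilizable, as are the push-offs of $L_+^j$ and of $K_+$ at the next two self-linking levels (items (2) and (3)).

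The main obstacle is the precise enumeration at the level $\sl=rs+r-s$: one must show there are exactly $n-1$ \emph{non-destabilizable} transverse knots there, separating them from the single transverse knot at that self-linking that arises as the $(s-r)$-fold transverse stabilization of the maximal knot. This requires deciding, for each negative-stabilization class meeting the diagonal $\tb-\rot=rs+r-s$, whether it contains a positive stabilization, and it is here that the full force of the Legendrian classification must be used: the exact widths $c$ and $c'$ of the exceptional regions in items (e) and (f), together with the overlapping multiplicities displayed in Figure~\ref{fig:range}, so that the transverse count at this level is not simply the number of Legendrian peaks $L_+^j$. Finally, item (5) follows from the same width bookkeeping: the exceptional regions emanating from $L_+^j$ have width $c=r-1$ and those from $K_+$ have width $c'$, and a short computation shows that stabilizing each non-destabilizable transverse knot down to self-linking number $rs-s-r$ lands all of them in the range where item (g) applies, so they become mutually transversely isotopic there and are unique thereafter.
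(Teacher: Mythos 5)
Your overall strategy is exactly the paper's: the published proof of Theorem~\ref{main23t} is a one-line reduction to Theorem~\ref{main23} via Theorem~\ref{tequivaletsl}, and your translation dictionary ($\sl=\tb-\rot$ on positive push-offs, negative Legendrian stabilization invisible transversely, positive stabilization equal to transverse stabilization), together with your identification of the unique maximal transverse knot as the common push-off of the $L_-^j$ and $K_-$ via the widths $c$ and $c'$, is the right bookkeeping. The problem is at the one point where the theorem is genuinely delicate, the count of $n-1$ in item (2). You assert that since the $L_+^j$ sit at the maximal Thurston--Bennequin invariant, ``none is a positive stabilization, so'' their push-offs are non-destabilizable. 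That inference is false: the transverse push-off $T(L)$ destabilizes if and only if \emph{some} negative stabilization $S_-^y(L)$ is Legendrian isotopic to a positive stabilization, not only if $L$ itself is one. Taken at face value your claim would produce $n$ non-destabilizable transverse knots at the level $\sl=rs+r-s$, contradicting the stated $n-1$. What actually happens is that $L_+^1$, the peak constructed inside $N_1$ rather than inside a non-thickenable torus $N_j^\pm$ with $j\geq 2$, is not protected by Proposition~\ref{curvesonpt1}: as in Step III of the proof of Proposition~\ref{class23}, its stabilizations merge with those of $L_-^1$ whenever the classical invariants agree, so $S_-^{s-r}(L_+^1)$ is Legendrian isotopic to $S_+^{s-r}(L_-^1)$ and hence $T(L_+^1)=S^{s-r}(T(L_-^1))$ is the destabilizable knot at that level; only the $n-1$ push-offs $T(L_+^j)$ with $j\geq 2$ are non-destabilizable.

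You do then flag this enumeration as ``the main obstacle'' and correctly observe that the answer is not simply the number of peaks $L_+^j$, but you stop at saying one must decide which negative-stabilization classes contain positive stabilizations without actually deciding it; since items (2) and (5) are the substance of the theorem, the proof is incomplete without that step, and as written your two statements about the level $\sl=rs+r-s$ contradict each other. The same care is needed for item (3): $T(K_+)$ is non-destabilizable not because $K_+$ admits no Legendrian destabilization, but because item (f) of Theorem~\ref{main23} with $x=0\leq c'$ shows that no $S_-^y(K_+)$ is a stabilization of any other peak, and an invariant count rules out its being a positive stabilization of a stabilization of $K_+$ itself. Your verification of item (5) via the widths $c$ and $c'$ is correct in outline once these points are in place.
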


For the classification of cables of other positive torus knots we need
some notation. Given a rational number $u=\frac sr>0$ let $u^a$ be the
largest rational number with an edge in the Farey tessellation to $u.$
See Figure~\ref{fig:nonsimple}. (The $a$ superscript stands for
"anti-clockwise", as $u^a$ is anti-clockwise of $u$ in the Farey
tessellation.) Similarly the smallest rational number with an edge in
the Farey tessellation to $u$ will be denoted by $u^c.$ A formula for
computing these numbers will be given in
Subsection~\ref{sec:continuedfrac}. We will refer to the interval
$(u^c,u^a)$ as the {\em interval of influence} for $u$.
\begin{figure}[ht]
        \relabelbox \small  {\centerline{\epsfbox{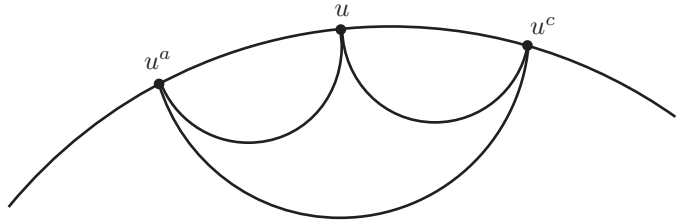}}}
  \relabel {a}{$u$}
  \relabel {b}{$u^a$}
  \relabel {c}{$u^c$}
  \endrelabelbox
        \caption{Given a rational number $u$, the numbers $u^a$ and
$u^c$ are determined by the above figure in the Farey tessellation.}
        \label{fig:nonsimple}
\end{figure}

Given a positive $(p,q)$-torus knot and $k$ a positive integer, define
\[
e_k=\frac k {pq-p-q}
\]
We will see in Subsection~\ref{Solid tori} that such $e_k$ represent
boundary slopes of {\em non-thickenable} solid tori, and that the
half-intervals of influence $(e_k,e_k^a)$ will represent boundary
slopes of {\em partially thickenable} solid tori when
$\gcd{(k,pq-p-q)}=1$.  We will refer to the $e_k$ as {\em exceptional
slopes}. If we think of the fractions $e_k^*$ as representing
curves on a torus, we denote the homological intersection of $(r,s)$
curves with the $e_k^*$ curves by
\[
\frac sr \cdot e_k^*.
\]

We can now state the precise classification theorems for cables of
general positive $(p,q)$-torus knots.

\begin{theorem}\label{main}
Let $\K$ be a $(p,q)$-torus knot with $(p,q)\not=(2,3)$. Let 
\[
\mathcal{I}=\{n\in \Z: n>1 \text{ and }  \gcd(n,pq-p-q)=1\}
\]
 and 
\[
J=\cup_{n\in \mathcal{I}} J_n
\] 
where $J_n=(e_n^c, e_n^a)$ is the interval of influence for the
exceptional slope $e_n$ defined above. The $J_n$ are all disjoint.

The classification of Legendrian knots in the knot type $\K_{(r,s)}$
is then given as follows.
\medskip 
\begin{enumerate}
\item If $\frac sr\not \in J$ then $\K_{(r,s)}$ is Legendrian
simple. Moreover, in this case we have the following classification.
\smallskip 
\begin{enumerate}
\item If $\frac sr \in (0,\frac{1}{pq-p-q}]$ then there is a unique
Legendrian knot $L\in\mathcal{L}(\K_{(r,s)})$ with Thurston-Bennequin
invariant $\tb(L)=rs+s(pq-p-q)-r$ and rotation number $r(L)=0.$ All
others are stabilizations of $L.$
\item If $\frac sr>\frac{1}{pq-p-q}$ or $\frac sr<0$, then the maximal
Thurston-Bennequin invariant for a Legendrian knot in
$\mathcal{L}(\K_{(r,s)})$ is $rs$ and the rotation numbers realized by
Legendrian knots with this Thurston-Bennequin invariant are
\[
\{\pm (r + s(n+k)) \mbox{ } | \mbox{ } k=(pq-p-q-n), (pq-p-q-n)-2, \ldots, -(pq-p-q-n) \},
\]
where $n$ is the least integer bigger than $\frac rs$.  All other
Legendrian knots $L\in\mathcal{L}(\K_{(r,s)})$ are stabilizations of
these. Two Legendrian knots with the same $\tb$ and $\rot$ are
Legendrian isotopic.
\end{enumerate}
\medskip
\item If $\frac sr\in J_n$ then there is some $k\geq 0$ such that
$\frac 1{k-1}> \frac sr >\frac 1{k}$ and $\K_{(r,s)}$ is not
Legendrian simple. The classification of Legendrian knots in
$\K_{(r,s)}$ is as follows.
\smallskip
\begin{enumerate}
\item The maximal Thurston-Bennequin invariant of $K_{(r,s)}$ is $rs$.
\item For each integer $i$ in the set 
\[
\{\pm (r + s(-k+l)) \mbox{ } | \mbox{ } l=(pq-p-q-k), (pq-p-q-k)-2,\ldots, -(pq-p-q-k) \},
\]
there is a Legendrian $L_i\in \mathcal{L}(\K_{(r,s)})$ with 
\[
\tb(L_i)=rs\,\, \text{ and } \rot(L_i)=i.
\]
\item There are two Legendrian knots $K_\pm\in
\mathcal{L}(\K_{(r,s)})$ satisfying
\[\tb(K_\pm)=rs \,\, \text{ and }  \rot(K_\pm)=\pm(r- s(pq-p-q))\]
if $\frac sr\in [e_n,e_n^a)$; however, if $\frac sr\in(e_n^c, e_n)$ then
\[
\tb(K_\pm)=rs- \left|\frac sr \cdot e_n\right| \,\, \text{ and }  \rot(K_\pm)=\pm r(n-1)
\]
and $K_\pm$ is not destabilizable.
\item All Legendrian knots in $\mathcal{L}(\K_{(r,s)})$ destabilize to
one of the $L_i$ or $K_\pm.$
\item\label{item??} Let 
\[
c=\begin{cases}(\frac sr \cdot e_n^a)-1& \frac sr\in [e_n,e_n^a)\\
(\frac sr\cdot e_n^a-\frac sr\cdot e_n)-1& \frac sr\in (e_n^c, e_n).
\end{cases}
\]
For any $y\in\N \cup \left\{0\right\}$ and $x\leq c$ the Legendrian
$S_\pm^xS_\mp^y(K_\pm)$ is not isotopic to a stabilization of any of
the $L_i$ or $K_\mp$.
\item Any two stabilizations of the non-destabilizable
Thurston-Bennequin invariant Legendrian knots in
$\mathcal{L}(\K_{(r,s)})$, except those mentioned in
item~(\ref{item??}), are Legendrian isotopic if they have the same
$\tb$ and $\rot$.
\end{enumerate}
\end{enumerate}
\end{theorem}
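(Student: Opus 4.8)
The plan is to prove Theorem~\ref{main} by reducing the classification of Legendrian cables to the classification of solid tori representing the underlying torus knot $\K$, following the strategy of Etnyre--Honda that was used for negative torus knots and for the $(2,3)$-cable in \cite{EtnyreHonda05}. The fundamental observation is that any Legendrian representative of $\K_{(r,s)}$ sits on the convex boundary of a solid torus $N$ whose core is a representative of $\K$, after possibly adjusting the dividing curves. Conversely, a solid torus with convex boundary and the appropriate boundary slope, together with a choice of ruling curve of slope $\frac sr$, determines a Legendrian cable. Thus the main tool I would invoke is the solid-tori classification promised in Subsection~\ref{Solid tori}, which identifies exactly which boundary slopes are achievable and, crucially, isolates the non-thickenable tori at the exceptional slopes $e_k$ and the partially thickenable tori on the half-intervals $(e_k,e_k^a)$.

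\medskip
First I would compute the classical invariants. For a ruling curve of slope $\frac sr$ on a convex torus with boundary slope determined by the solid torus and with dividing set of a given number of curves, the Thurston--Bennequin invariant and rotation number are computed by the standard formulas relating $\tb$ and $\rot$ of the cable to the twisting and the intersection data with the dividing curves; this is where the quantity $\frac sr\cdot e_n^*$ enters. The maximal-$\tb$ computation giving $\tb=rs$ in case (2)(a), and $\tb=rs+s(pq-p-q)-r$ in case (1)(a), should follow from maximizing over the allowable dividing sets on the maximally thickened torus, using that the maximal such torus has boundary slope $\frac{1}{pq-p-q}$ (the contact width of $\K$). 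The rotation number sets listed in (1)(b) and (2)(b) come from enumerating the possible signs assigned to the bypasses / Legendrian rulings on the intermediate convex tori, which is the usual ``shuffling'' count that produces the arithmetic progressions with step $2$.

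\medskip
The heart of the argument, and the main obstacle, is case (2): when $\frac sr\in J_n$ the relevant solid torus is only \emph{partially} thickenable, and this is precisely the new phenomenon that forces non-simplicity. Here I would argue that a Legendrian $K_\pm$ sitting on a non-thickenable or partially thickenable torus at slope $e_n$ cannot be destabilized, because any destabilization would have to pass to a thicker solid torus, contradicting the non-thickenability established in the solid-tori classification. The two cases $\frac sr\in[e_n,e_n^a)$ versus $\frac sr\in(e_n^c,e_n)$ must be separated according to whether the ruling slope lies on the thickenable or the non-thickenable side of the partially thickenable torus, which accounts for the different formulas for $\tb(K_\pm)$ and $\rot(K_\pm)$ in item (2)(c). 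For the non-destabilization-after-stabilization statements in items (2)(d)--(2)(f), I would track how stabilizations move a Legendrian knot across the Farey tessellation: each stabilization corresponds to adding a bypass and thereby increasing the number of dividing curves or changing the boundary slope, and two stabilized knots become isotopic only once they can both be placed on a \emph{common} thickened torus. The count $c$ in item (2)(e), expressed via $\frac sr\cdot e_n^a$, measures exactly how many stabilizations of one sign are required before $K_\pm$ can be moved onto a thicker torus and hence merged with the standard-slope representatives; establishing this bound is the delicate point, and it relies on the precise width of the interval of influence $(e_n^c,e_n^a)$ controlling how far the non-thickenability obstruction persists.

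\medskip
Finally, to assemble the full classification I would verify that every Legendrian representative of $\K_{(r,s)}$ arises on some solid torus in the classification list (so the $L_i$ and $K_\pm$ are exhaustive), prove item (2)(d) by destabilizing down to a maximally thickened torus, and then use the contact-geometric uniqueness of solid tori with fixed boundary slope and dividing set to conclude that two representatives with the same $\tb$, $\rot$, and the same stabilization history are Legendrian isotopic, giving items (1)(b) and (2)(f). The simplicity claim in case (1) then follows because off the intervals $J_n$ all the relevant solid tori are fully thickenable, so no non-destabilizable exotic $K_\pm$ can appear and the invariants $(\tb,\rot)$ determine the knot.
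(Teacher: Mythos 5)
Your proposal follows essentially the same route as the paper: reduce the classification of Legendrian cables to the classification of solid tori representing $\K$ (non-thickenable tori at the exceptional slopes $e_n$ and partially thickenable tori on $[e_n,e_n^a)$), compute $\tb$ and $\rot$ via the standard cable formulas and relative Euler class data, and control (de)stabilizations by tracking which convex tori a stabilized knot can sit on, with the count $c$ coming from $\frac sr\cdot e_n^a$ exactly as in the paper's Propositions on knots on partially thickenable tori. You correctly isolate the delicate points (non-destabilizability of $K_\pm$ and the persistence of the obstruction under fewer than $c+1$ stabilizations of one sign), which the paper handles by Isotopy Discretization arguments showing such knots can only lie on partially or non-thickenable tori inside $N_n^\pm$.
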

From this theorem we can easily derive the transverse classification.
\begin{theorem}\label{maint}
Let $\K$ be a $(p,q)$-torus knot with $(p,q)\not=(2,3)$. Using
notation from Theorem~\ref{main} we have the following classification
of transverse knots in $\mathcal{T}(\K_{(r,s)})$.
\smallskip
\begin{enumerate}
\item If $\frac sr\not \in J_n$ for any $n\in \mathcal{I}$ then $\K_{(r,s)}$ is transversely
simple and all transverse knots in this knot type are stabilizations
of the one with self-linking number $rs-r+s(pq-p-q)$.
\item If $\frac sr\in J_n$ for some $n\in \mathcal{I}$ then
$\K_{(r,s)}$ is not transversely simple. There is a unique transverse
knot $T$ in this knot type with maximal self-linking number, which is
$rs-r+s(pq-p-q)$. There is also a unique non-destabilizable knot $T'$
in this knot type and it has self-linking number $rs+r-s(pq-p-q)$. All
other transverse knots in $\mathcal{T}(\K_{(r,s)})$ destabilize to
either $T$ or $T'$ and the stabilizations of $T$ and $T'$ stay
non-isotopic until they are stabilized to the point that their
self-linking numbers are
\[
rs+r-s(pq-p-q)-2\left(\frac sr \cdot e_n^a\right)
\]
in the case of $\frac sr \in [e_n,e_n^a)$, and
\[
rs+r-s(pq-p-q)-2\left(\frac sr \cdot e_n^a - \frac sr \cdot e_n\right)
\]
in the case of $\frac sr \in (e_n^c,e_n)$.

\end{enumerate}
\end{theorem}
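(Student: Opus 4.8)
The plan is to deduce Theorem~\ref{maint} from the Legendrian classification of Theorem~\ref{main} by invoking the standard dictionary between transverse and Legendrian knots. Recall that the positive transverse push-off assigns to each Legendrian $L$ a transverse knot $T_+(L)$ with $\sl(T_+(L))=\tb(L)-\rot(L)$, that every transverse knot arises this way, and (Epstein--Fuchs--Meyer) that $T_+(L_0)$ and $T_+(L_1)$ are transversely isotopic if and only if $L_0$ and $L_1$ become Legendrian isotopic after sufficiently many negative stabilizations. Since $S_-$ fixes $\tb-\rot$ while $S_+$ lowers it by $2$, under this dictionary negative stabilization is invisible and one transverse stabilization corresponds to a single positive Legendrian stabilization (dropping $\sl$ by $2$). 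Thus transverse knots are exactly the negative-stabilization classes in the Legendrian mountain range of Theorem~\ref{main}, and the whole transverse classification can be read off from that picture.

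First I would treat the simple case $\frac sr\notin J$. Here Theorem~\ref{main}(1) says every Legendrian destabilizes to a maximal-$\tb$ representative and that any two Legendrians with equal $(\tb,\rot)$ are isotopic; consequently each line $\tb-\rot=\mathrm{const}$ meets a single negative-stabilization class, so there is exactly one transverse knot for each value of $\sl$ and $\K_{(r,s)}$ is transversely simple. The maximal self-linking number is $\max(\tb-\rot)$, attained at the maximal-$\tb$ Legendrian of smallest rotation number; substituting the rotation-number formulas of Theorem~\ref{main}(1) gives $rs-r+s(pq-p-q)$, as claimed.

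Next I would handle the non-simple case $\frac sr\in J_n$. The non-destabilizable Legendrians are the $L_i$ together with $K_\pm$, and I would compute $\sl=\tb-\rot$ for each. The minimal-rotation data yields a single maximal-$\sl$ class $T$, with $\sl=rs-r+s(pq-p-q)$, automatically non-destabilizable; the symmetric, maximal-rotation data yields a second class $T'$ with $\sl=rs+r-s(pq-p-q)$. I would then show these are the only non-destabilizable transverse knots: for each interior peak, and for the non-top $K_\pm$ when $\frac sr\in(e_n^c,e_n)$, the mountain-range filling guaranteed by Theorem~\ref{main}(2) lets one realize a suitable negative stabilization as a genuine positive stabilization, so the corresponding push-off transversely destabilizes (to $T$ or $T'$), whereas no such filling lies above $T'$. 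Finally, the depth at which $T$ and $T'$ merge is the transverse shadow of item~(\ref{item??}) of Theorem~\ref{main}: since $S_\pm^xS_\mp^y(K_\pm)$ fails to be a stabilization of the remaining peaks precisely for $x\le c$, the two classes stay distinct until $c+1$ positive stabilizations have been applied, and as each transverse stabilization lowers $\sl$ by $2$ this translates into the self-linking values $rs+r-s(pq-p-q)-2(\frac sr\cdot e_n^a)$ and $rs+r-s(pq-p-q)-2(\frac sr\cdot e_n^a-\frac sr\cdot e_n)$ recorded in the statement.

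The main obstacle is precisely this last bookkeeping of negative-stabilization classes in the non-simple range, where distinct Legendrians can share the same $(\tb,\rot)$. Deciding which push-offs are transversely non-destabilizable, and at what depth $T$ and $T'$ coincide, cannot be settled from the classical invariants alone; it requires the fine non-isotopy data of item~(\ref{item??}) and the subsequent item of Theorem~\ref{main} to determine, lattice point by lattice point, when a negative stabilization of $K_\pm$ is genuinely Legendrian isotopic to a positive stabilization of some $L_i$ rather than merely sharing its $\tb$ and $\rot$.
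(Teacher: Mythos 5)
Your proposal follows exactly the paper's route: the paper's entire proof of Theorem~\ref{maint} consists of invoking Theorem~\ref{tequivaletsl} (transverse classification is equivalent to Legendrian classification up to negative stabilization) and declaring the result a corollary of Theorem~\ref{main}. The detailed bookkeeping of negative-stabilization classes, push-off self-linking numbers, and the depth at which $T$ and $T'$ merge that you spell out is precisely the (unwritten) content of that corollary, so the two approaches coincide.
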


We now turn from classification results for cables of positive torus
knots, to classification results for embeddings of solid tori
representing the positive torus knots themselves.

 \subsection{Classification results for solid tori}
 \label{Solid tori}

Let $S$ be a solid torus in a manifold $M.$ We say $S$ \dfn{is in the
knot type} $\K$, or \dfn{represents} $\K$, if the core curve of $S$ is
in the knot type $\K.$
 
We say a solid torus $S$ with convex boundary in a contact manifold
$(M,\xi)$ \dfn{thickens} if there is a solid torus $S'$ that contains
$S$ such that $S'$ has convex boundary with dividing slope different
from $S.$ The existence of non-thickenable tori was first observed in
\cite{EtnyreHonda05}; the following theorem shows that non-thickenable
tori exist for all positive $(p,q)$-torus knots.

\begin{theorem}\label{thm:nonthick}
Let $S$ be a solid torus in the knot type of a positive $(p,q)$-torus
knot. In the standard tight contact structure $\xi_{std}$ on $S^3$
suppose that $\partial S$ is convex with two dividing curves of slope
$\frac sr.$ Then $S$ thickens unless $\frac sr$ is an exceptional
slope
 \[
e_k=\frac{k}{pq-p-q},
\] 
for some positive integer $k,$ in which case it might or might not
thicken.

Moreover for each positive integer $k>1$ there are, up to contact
isotopy, exactly two solid tori $N_k^\pm$ with convex boundary having
$2n_k$ dividing curves of slope $e_k$ that do not thicken, where
$n_k=\gcd(pq-p-q, k)$. For $k=1$ there is exactly one solid torus
$N_1$ with convex boundary having two dividing curves of slope $e_1.$
This solid torus is a standard neighborhood of a Legendrian
$(p,q)$-torus knots with maximal Thurston-Bennequin invariant and it
does not thicken.
 \end{theorem}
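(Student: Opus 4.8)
The plan is to study how $S$ sits relative to the genus-one Heegaard torus $T$ of $S^3=V_1\cup_T V_2$ on which the $(p,q)$-torus knot $\K$ lies, and to use Honda's classification of tight contact structures on solid tori and on $T^2\times I$ to decide exactly when a bypass is available to enlarge $S$. First I would normalize the picture: make $\partial S$ convex with dividing slope $\frac sr$ and a minimal number of dividing curves, make $T$ convex, and arrange $\partial S$ and $T$ to intersect efficiently (or be disjoint and parallel once $S$ has been maximally thickened inside a neighborhood of $T$). Measuring slopes in the Seifert framing, recall that the cabling slope given by $T$ is $pq$ and that $pq-p-q=\overline{\tb}(\K)$, so that $e_1=\frac1{pq-p-q}$ is exactly the dividing slope of the standard neighborhood of a maximal Thurston--Bennequin Legendrian $\K$.

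Next, thickening $S$ amounts to moving the dividing slope of a convex torus bounding $S$ along edges of the Farey tessellation toward the longitude. Using the Legendrian realization principle together with the classification of tight structures on $T^2\times I$, I would show that so long as $\frac sr$ is not one of the exceptional slopes there is always a convex annulus, hence a bypass, available---either from a $T^2\times I$ layer between $\partial S$ and $T$, or by breaking through $T$ into one of the solid tori $V_i$---that changes the dividing slope, so $S$ thickens. The combinatorics of the Farey tessellation (and the continued-fraction formulas for $u^a,u^c$ promised in Subsection~\ref{sec:continuedfrac}) pinpoints the slopes where this bypass search must halt: these are precisely the $e_k=\frac k{pq-p-q}$, where any attempt to enlarge would force the dividing curves of $\partial V_1$ or $\partial V_2$ to violate tightness in the Seifert-fibered complement, whose two exceptional fibers of orders $p$ and $q$ obstruct the required bypass.

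With non-thickenability pinned to the slopes $e_k$, I would then count the non-thickenable tori at each $e_k$. Cutting $S$ along a convex meridional disc or annulus and applying Honda's classification to the resulting pieces of $V_1$ and $V_2$, the tight structures on the complement compatible with slope $e_k$ that do not thicken are governed by $n_k=\gcd(pq-p-q,k)$: a non-thickenable torus must carry $2n_k$ dividing curves (more than the minimal two when $n_k>1$, for otherwise one could thicken by reducing their number), and the sign choices in the relative Euler class---equivalently the rotation-number data coming from the two exceptional fibers---collapse to exactly two configurations $N_k^\pm$ for $k>1$, which are exchanged by the complex-conjugation symmetry of $(S^3,\xi_{std})$ and distinguished by the sign of this invariant. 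For $k=1$ we have $n_1=1$, hence two dividing curves, and the unique configuration is the standard neighborhood $N_1$ of the maximal-$\tb$ Legendrian $\K$, which does not thicken because this Legendrian admits no destabilization.

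I expect the main obstacle to be the analysis at the slopes $e_k$: proving that the bypass search genuinely terminates there---rather than merely that the obvious bypasses fail---and that no tight gluing of the complementary Seifert pieces permits a slope change. This is where the interaction between the two exceptional fibers and the convex-surface bookkeeping is most delicate, and where one must rule out hidden bypasses by a state-traversal and convexity argument. The accompanying determination that there are exactly $2n_k$ dividing curves and exactly two tori $N_k^\pm$, carried out via the relative Euler class, is the second delicate point and requires a careful comparison of the tight structures on $V_1$ and $V_2$ across the exceptional slope.
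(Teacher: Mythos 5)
Your first two steps---normalizing against the Heegaard torus, viewing the complement as neighborhoods of the two Hopf-link cores joined by an annulus, and running the Imbalance Principle to produce bypasses whenever the slope is not of the form $e_k=\frac{k}{pq-p-q}$---are essentially the paper's Lemma~\ref{basecasethickening}, and your relative Euler class count of the two sign configurations $N_k^\pm$ matches the uniqueness argument of Lemma~\ref{mustbeNk}. But the heart of the theorem is the step you explicitly defer: proving that the tori with slope $e_k$ and $2n_k$ dividing curves genuinely do \emph{not} thicken. Here your proposed ``state-traversal and convexity argument'' inside $S^3$ cannot close the gap, because non-thickenability is a statement about \emph{all} solid tori containing $N_k^\pm$, not about the failure of the visible bypasses in any particular position relative to $T$, $V_1$, $V_2$. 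The paper's proof (Lemma~\ref{lem:nonthicken}) requires a different idea: the complement $M_k$ is Seifert fibered and, since the torus knot is fibered with periodic monodromy, has a $pq$-fold cover $\widetilde M_k\cong S^1\times\Sigma$; one shows the $S^1$ fibers are Legendrian with twisting $-(pq(k-1)+p+q)$, notes that a thickening would produce a Legendrian fiber with strictly greater twisting, and then traps that curve---after cutting $\Sigma$ into a polygon, computing the boundary slope by edge-rounding, and tiling finitely many copies as in Honda's Lemma 3.2 of \cite{Honda00b}---inside a standard neighborhood of a Legendrian with twisting $-(pq(k-1)+p+q)$, contradicting the twisting bound. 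Without this covering-space argument (or a substitute of equal strength) the theorem is not proved; you have only shown that thickening is forced \emph{away} from the exceptional slopes.

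There are two further genuine problems. First, your justification for $k=1$---that $N_1$ does not thicken ``because this Legendrian admits no destabilization''---conflates two different notions: non-destabilizability is automatic for any maximal-$\tb$ representative and only rules out thickening to standard neighborhoods (integer reciprocal slope, two dividing curves), whereas non-thickenability must exclude containment in convex tori of \emph{any} other dividing slope or number of dividing curves; distinguishing these notions is precisely the subtlety of the theorem, and the $k=1$ case needs the same covering argument as $k>1$. Second, existence is not addressed: before counting non-thickenable tori one must actually exhibit them inside the \emph{standard tight} $S^3$, which the paper does in Construction~\ref{constructNk} by gluing universally tight structures on the solid torus and the annulus neighborhood and identifying the resulting complement with a standard neighborhood of a Legendrian Hopf link; your Euler-class count presupposes this construction rather than supplying it.
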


A key feature in the knot classification results above in
Subsection~\ref{knotclassifications} is a complete understanding of
not only non-thickenable tori but also \dfn{partially thickenable}
tori, that is tori with convex boundary that thicken, but not to a
maximally thick torus in the given knot type. The existence of such
tori has not been observed before, but it is clear that such tori will
be key to future Legendrian classification results. In addition it is
likely they will be important in understanding contact surgeries.  The
following theorem shows that partially thickenable tori exist for all
positive $(p,q)$-torus knots.

\begin{theorem}\label{thm:partially}
Let $\K$ be a positive $(p,q)$-torus knot and let
$e_k=\frac{k}{pq-p-q}$ be the exceptional slopes. Let $I_k=[e_k,
e_k^a)$ and $\mathcal{I}=\{n\in \Z: n>1 \text{ and }
\gcd(n,pq-p-q)=1\}$.  All solid tori below will represent the knot
type $\K$.
\medskip
\begin{enumerate}
\item If $(p,q)=(2,3)$ then $\mathcal{I}=\N-\left\{1\right\}$ and we
have the following.
\smallskip
\begin{enumerate}
\item The intervals $I_k=(k,\infty),$ so $I_k\subset I_{k+1}.$
\item Any solid torus $S$ with convex boundary thickens to $N_k^\pm$
or to $N_1$ (that is a neighborhood of the maximal Thurston-Bennequin
invariant $(2,3)$-torus knot).
\item Any solid torus inside $N_k^\pm$ with convex boundary having
dividing slope greater than $k$ (that is in $I_k$) does not thicken
past the slope $e_k.$
\item Any solid torus inside $N_k^\pm$ with convex boundary having
negative (or infinite) dividing slope will thicken to a neighborhood
of the maximal Thurston-Bennequin invariant $(2,3)$-torus knot.
\end{enumerate}
\medskip
\item If $(p,q)\not=(2,3)$ then we have the following.
\smallskip
\begin{enumerate}
\item For any $k\not\in \mathcal{I}$, any solid torus $S$ inside
$N_k^\pm$ with either boundary slope different from $e_k$, or less
than $2n_k$ dividing curves, thickens to $N_1$.
\item All the $I_k$ with $k\in \mathcal{I}$ are disjoint.
\item Any solid torus $S$ with convex boundary having dividing slope
in $I_k$ thickens to $N_k^\pm$ or to $N_1$ (that is a neighborhood of
the maximal Thurston-Bennequin invariant $(p,q)$-torus knot).
\item Any solid torus inside $N_k^\pm$ for some $k\in \mathcal{I}$,
and with convex boundary having dividing slope in $I_k$, does not
thicken past the slope $e_k.$
\item Any solid torus inside $N_k^\pm$ with convex boundary having
dividing slope outside of $I_k$ (that is greater than  or equal to $e_k^a$ 
or negative) will thicken to a neighborhood of the maximal
Thurston-Bennequin invariant $(p,q)$-torus knot.
\end{enumerate}
\end{enumerate}
\end{theorem}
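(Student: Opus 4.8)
The plan is to derive Theorem~\ref{thm:partially} from the classification of non-thickenable tori in Theorem~\ref{thm:nonthick} together with convex surface theory carried out in the complement of the torus knot. Recall that a positive $(p,q)$-torus knot $\K$ sits on the Heegaard torus $\Sigma$ splitting $S^3$ into solid tori $V_p$ and $V_q$, so that for a solid torus $S$ representing $\K$ the complement $S^3\setminus S$ is assembled from $V_p\setminus S$ and $V_q\setminus S$ glued along an annulus coming from $\Sigma$. First I would fix this decomposition and, using the longitude–meridian framing of $\partial S$, record how the dividing slope of a convex torus moves under bypass attachment from the outside, tracking the motion along edges of the Farey tessellation. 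The point is that thickening $S$ means enlarging it inside $S^3\setminus S$, i.e. finding bypasses in the complement pieces, and the meridional data of $V_p\setminus S$ and $V_q\setminus S$ combine so that the exceptional slopes $e_k=k/(pq-p-q)$ are exactly the slopes at which such a bypass is obstructed; this is the content Theorem~\ref{thm:nonthick} provides, and everything below is organized around where bypasses can and cannot be found.

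The thickening statements then follow from a case analysis on the boundary slope $\frac sr$. If $\frac sr$ lies outside every interval of influence $I_k$ (in particular if it is negative or at least $e_k^a$), I would show that a sequence of bypasses drawn from $V_p\setminus S$ and $V_q\setminus S$ moves the dividing slope monotonically along Farey edges with no obstruction, until $\partial S$ becomes the boundary of a standard neighborhood of the maximal Thurston--Bennequin Legendrian $(p,q)$-torus knot, namely $N_1$; this yields parts (1d), (2e), and the relevant slopes of (2a). If instead $\frac sr\in I_k$, the same analysis thickens $S$ toward $e_k$, and the process terminates either at one of the model tori $N_k^\pm$ or at $N_1$ according to the continued-fraction/parity data of the intermediate toric annuli; this gives (1b) and (2c). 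That a torus already contained in $N_k^\pm$ with slope in $I_k$ cannot be pushed past $e_k$ is exactly the non-thickenability of $N_k^\pm$ asserted in Theorem~\ref{thm:nonthick}, which gives (1c) and (2d).

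The combinatorial claims are separate and, in principle, cleaner. The disjointness of the $I_k$ for $k\in\mathcal{I}$ (part (2b)) and the nesting of the $I_k$ in the trefoil case (part (1a), where $pq-p-q=1$ so $e_k=k$ and $e_k^a=\infty$) I would verify directly from the formula for $e_k^a$ and $e_k^c$ in the Farey tessellation together with the condition $\gcd(k,pq-p-q)=1$: coprimality guarantees that $e_k=k/(pq-p-q)$ is already in lowest terms, hence has a genuine edge to its computed neighbors, and it forces distinct exceptional slopes to have non-overlapping intervals of influence. The failure of coprimality, i.e. $k\notin\mathcal{I}$, is precisely what collapses $I_k$ and makes the would-be partial thickening thicken all the way to $N_1$; establishing this dichotomy is how I would finish part (2a) for the remaining slopes and configurations with fewer than $2n_k$ dividing curves.

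The main obstacle I anticipate is the precise bookkeeping in the bypass argument: one must show not merely that thickenings exist and are obstructed where claimed, but that the dividing slope stops exactly at $e_k$, neither before nor after, and that it lands on the correct model $N_k^\pm$ rather than running on to $N_1$. This requires matching the slope dynamics under edge-rounding and bypass attachment against the exact count of non-thickenable tori supplied by Theorem~\ref{thm:nonthick}, and it is here that the arithmetic distinction $\gcd(k,pq-p-q)=1$ versus $k\notin\mathcal{I}$ becomes decisive in deciding whether an intermediate torus is a genuine partial thickening or a stepping stone to the maximal neighborhood. The framework of \cite{EtnyreHonda03, EtnyreHonda05} for tight contact structures on solid tori and toric annuli is exactly what is needed to control these transitions.
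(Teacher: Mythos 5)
Your overall framework (reducing to Theorem~\ref{thm:nonthick}, Farey-tessellation combinatorics for the disjointness statements, and bypass hunting in the complement for the thickening statements) matches the paper's organization, and your treatment of parts (1a), (2b), and the ``thickens to $N_1$'' statements is in the right spirit. But there is a genuine gap at the central claim, parts (1c) and (2d). You write that a solid torus $N$ contained in $N_k^\pm$ with dividing slope in $I_k$ cannot be thickened past $e_k$ because this ``is exactly the non-thickenability of $N_k^\pm$ asserted in Theorem~\ref{thm:nonthick}.'' It is not. Non-thickenability of $N_k^\pm$ is a statement about the specific torus $\partial N_k^\pm$; the smaller torus $N$ is a different solid torus, and a hypothetical thickening $N''\supset N$ with slope $e_{k'}$, $k'<k$, carries no forced containment relation with $N_k^\pm$ in either direction, so no contradiction with Theorem~\ref{thm:nonthick} arises directly. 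Indeed, by Lemma~\ref{basecasethickening} the torus $N$ certainly \emph{does} thicken to some exceptional slope; the entire issue is to show the process is trapped at $e_k$.

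The paper's Proposition~\ref{prop:pt} is devoted precisely to closing this gap, and its proof is the technical heart of the theorem: one realizes the hypothetical thickening past $e_k$ by a collection of bypass disks $D_i$ along an annulus, and then shows---via a contact-isotopy argument on the $2$-complex $X=L_1\cup L_2\cup A$ whose neighborhood is the complement of $N_k^\pm$, using Lemma~\ref{moveA} and an innermost-curve/outermost-``star'' analysis of $D_i\cap X$---that the bypasses can be pushed into $N_k^\pm\setminus N$. There they would produce a convex torus of slope outside $[e_k,e_k^a]$ inside the minimally twisting layer $N_k^\pm\setminus N$, contradicting tightness (Lemma~\ref{lem1}). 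Without some argument of this kind your proof does not establish (1c) or (2d), and consequently the ``or to $N_k^\pm$'' alternative in (1b) and (2c) is also unsupported. Separately, for part (2a) you should supply the actual mechanism from Remark~\ref{rem:pt}: when $\gcd(k,pq-p-q)>1$ one can thin $N_k^\pm$ to a torus with the same slope but fewer dividing curves, which permits destabilizing $L_1$ and $L_2$ and hence thickening past $e_k$; ``coprimality collapses $I_k$'' as you phrase it is not by itself an argument.
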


From this theorem we can classify solid tori in the knot types of
positive torus knots.

\begin{corollary}\label{cor:tori}
Let $\K$ be a positive $(p,q)$-torus knot and let
$e_k=\frac{k}{pq-p-q}$ be the exceptional slopes. Let $I_k=[e_k,
e_k^a)$ and $\mathcal{I}=\{n\in \Z: n>1 \text{ and }
\gcd(n,pq-p-q)=1\}$.
\medskip  
\begin{enumerate}
\item If $(p,q)=(2,3)$, then given a slope $s > 1$ there is some
integer $n$ such that $n\leq s<n+1$ and there are exactly $2n$ solid
tori representing the knot type $\K$ with convex boundary having
dividing slope $s$ and two dividing curves, only two of which thicken
to a standard neighborhood of a Legendrian knot.
\medskip 
\item If $(p,q)\not=(2,3)$, then given any slope $s\geq \frac
1{pq-p-q}$ we have the following.
\smallskip 
\begin{enumerate}
\item If there is some integer $n>0$ such that $\frac 1n< s< \frac
1{n-1}$ and $s\not\in I_k$ for any $k\in \mathcal{I}$, then there are
exactly $2(pq-p-q-n+1)$ solid tori representing the knot type $\K$
with convex boundary having dividing slope $s$ and two dividing curves
each of which thickens to a standard neighborhood of a Legendrian knot
with $\tb=n$.
\item If there is some integer $n>0$ such that $\frac 1n< s< \frac
1{n-1}$ and $s\in I_k$ for any $k\in \mathcal{I}$, then there are
exactly $2(pq-p-q-n+1)+2$ solid tori representing the knot type $\K$
with convex boundary having dividing slope $s$ and two dividing
curves, all but two of which thicken to a standard neighborhood of a
Legendrian knot with $\tb=n$.
\item If there is some $n>0$ such that $s=\frac 1n$, then there are
exactly $pq-p-q-n+1$ solid tori representing the knot type $\K$ with
convex boundary having dividing slope $s$ and two dividing curves and
they each represent a standard neighborhood of a Legendrian knot with
$\tb=n$.
\end{enumerate}
\medskip
\item Given any negative slope $s$ there is some negative integer $n <
0$ such that $\frac 1{n+1} <s< \frac 1{n}$. A solid torus with convex
boundary having dividing slope $s$ and two dividing curves will
thicken to a solid torus that is a standard neighborhood of a
$\tb=n+1$ Legendrian knot.
\end{enumerate}
\end{corollary}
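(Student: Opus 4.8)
The plan is to derive Corollary~\ref{cor:tori} directly from the thickening results of Theorem~\ref{thm:nonthick} and Theorem~\ref{thm:partially}, together with the standard ``counting via the Farey tessellation'' technology for convex tori in solid tori. The core philosophy is that a solid torus $S$ with convex boundary having dividing slope $s$ and exactly two dividing curves is, up to isotopy, determined by how $S$ sits inside the maximal non-thickenable (or partially thickenable) tori that contain it; since Theorem~\ref{thm:partially} tells us precisely what each such $S$ thickens to, the count of isotopy classes reduces to enumerating the possible ``thickening destinations'' together with the embeddings realizing a given slope $s$. For most of the regime the destinations are either the $N_k^\pm$ or $N_1$ (the maximal-$\tb$ neighborhood), so the proof is essentially a bookkeeping argument over the intervals of influence $I_k$.

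First I would set up the counting in the positive regime $s \ge \frac{1}{pq-p-q}$. Given $s$, I would locate the integer $n$ with $\frac 1n < s < \frac 1{n-1}$ (equivalently with $\tb = n$ on the relevant Legendrian representative) and count the distinct ways a convex torus of slope $s$ can be nested inside a standard neighborhood of a maximal-$\tb=n$ Legendrian torus knot. This is the standard argument: edges in the Farey tessellation between $s$ and the basic slopes determine the number of non-isotopic solid tori, and the bypass/thickening analysis from Theorem~\ref{thm:partially} guarantees each such torus thickens to $N_1$, giving the count $2(pq-p-q-n+1)$ in case (2a). For case (2b), when $s \in I_k = [e_k, e_k^a)$ for some $k \in \mathcal{I}$, one adds the two extra tori coming from the non-thickenable (or partially thickenable) pair $N_k^\pm$ produced by Theorem~\ref{thm:partially}, which do not thicken to $N_1$; this yields the extra $+2$ and the statement that all but these two thicken to a $\tb=n$ neighborhood. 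Case (2c), $s = \frac 1n$, is the boundary case where the tori are exactly standard neighborhoods of $\tb=n$ Legendrian knots, so the count halves to $pq-p-q-n+1$ (no ambiguity in nesting direction). The $(2,3)$ case (1) and the negative-slope case (3) are handled identically but using the corresponding parts of Theorem~\ref{thm:partially}, with $\mathcal{I} = \N \setminus \{1\}$ forcing every $I_k$ to contribute in case (1).

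The main obstacle will be making the counting argument genuinely rigorous rather than heuristic: I must verify that two convex solid tori of the same boundary slope $s$ that thicken to the \emph{same} destination torus are in fact contact isotopic, and that those thickening to \emph{distinct} destinations are not. The forward direction (distinct destinations give distinct tori) is immediate since contact isotopy is preserved under thickening. The converse---that the thickening destination together with the slope $s$ is a complete invariant---requires showing that inside each fixed $N_k^\pm$ (or $N_1$) the convex tori of slope $s$ are classified up to isotopy, which is where the detailed bypass attachment and continued-fraction analysis of Subsection~\ref{sec:continuedfrac} must be invoked. The delicate point is the transition across the half-interval of influence $(e_k, e_k^a)$, where partial thickenability means a torus thickens past some slopes but not past $e_k$ itself; I would need to check carefully that the count of embeddings on each side of $e_k$ assembles correctly and that the endpoints $s = e_k$ versus $s \in (e_k, e_k^a)$ are treated consistently with the half-open interval convention $I_k = [e_k, e_k^a)$.
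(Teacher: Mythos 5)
Your proposal follows essentially the same route as the paper's proof: the paper likewise sorts a convex torus of slope $s$ by which $N_m^\pm$ or $N_1$ contains it, pins down its position via the unique basic slice (between $\partial N_m^\pm$ and an interior torus of slope $e_m^a$, respectively the two basic slices inside $N_1$) in which a convex torus of each slope is unique up to contactomorphism, and then upgrades contactomorphism to contact isotopy using Eliashberg's theorem (Theorem~\ref{thm:htpyequiv} and Subsection~\ref{sssec:isotopy}); this produces exactly the counts you describe. The one ingredient you leave implicit is that last contactomorphism-to-isotopy step, which is precisely how the paper closes the ``same destination and same slope implies isotopic'' direction that you correctly flag as the main obstacle.
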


We conclude this introduction with an outline of what follows.  In
Section~\ref{sec:pre} we collect needed preliminaries, including facts
about continued fractions and convex surfaces, and we outline a
strategy for classifying Legendrian knots.  In
Section~\ref{Classification of solid tori} we classify embeddings
of solid tori representing positive torus knots.  In
Section~\ref{simple cables} we provide classifications for all simple
cables of positive torus knots, and in Sections~\ref{non-simple cables
non-trefoil} and ~\ref{non-simple cables trefoil} we establish
classifications for all non-simple cables of positive torus knots.

\bigskip
\noindent
{\bf Acknowledgments.}  The first and third authors were partially supported by
NSF Grant DMS-0804820. The second author was partially supported by
QGM (Centre for Quantum Geometry of Moduli Spaces) funded by the
Danish National Research Foundation. Some of the work presented in
this paper was carried out in the Spring of 2010 while the first and
third author were at MSRI, we gratefully acknowledge their support for
this work.

\section{Preliminaries}\label{sec:pre}

In this section we first prove some important facts about continued
fractions in Subsection~\ref{sec:continuedfrac}. The remaining
sections recall various facts concerning the classification of
Legendrian and transverse knots from \cite{EtnyreHonda01b}. The reader
is assumed to be familiar with the basic notions associated to convex
surfaces and Legendrian and transverse knots, but these sections are
included for the convenience of the reader and to make the paper as
self-contained as possible. All this information can be found in
\cite{EtnyreHonda01b, Honda00a}.

\subsection{Continued fractions, the Farey tessellation, and intersection of curves on a torus}\label{sec:continuedfrac}
In this section we collect various facts about continued fractions and
the Farey tessellation (see Figure~\ref{fig:ft}) that will be needed
throughout our work.

Given a rational number $u>0$ we may represent it as a continued
fraction
\[
u=a_0 - \cfrac 1{a_1- \cfrac 1{a_2 \ldots -\cfrac 1{a_n}}}
\]
with $a_0\geq 1$ and the other $a_i>1$. We will denote this as
$u=[a_0; a_1,\ldots, a_n]$. If we know that $u=[a_0;a_1,\ldots, a_n]$
then we define
\[
u^a= [a_0; a_1,\ldots, a_{n-1}],
\]
with the convention that if $n=0$ then $u^a=\infty$; we also define
\[
u^c=[a_0;a_1,\ldots, a_n-1].
\]
\begin{lemma}\label{lem:eaec}
The number $u^a$ is the largest rational number bigger than $u$ with
an edge to $u$ in the Farey tessellation and $u^c$ is the smallest
rational number less than $u$ with an edge to $u$ in the Farey
tessellation. Moreover there is an edge in the Farey tessellation
between $u^a$ and $u^c$ and $u$ is the mediant of $u^a$ and $u^c$,
that is if $u^a=\frac{p^a}{q^a}$ and $u^c=\frac{p^c}{q^c}$ then
\[
u=\frac{p^a+p^c}{q^a+q^c}.
\] 
\end{lemma}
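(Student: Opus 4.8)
The plan is to compute everything from the convergents of the negative continued fraction, read off via the matrix representing the continued fraction algorithm. Write $u=[a_0;a_1,\ldots,a_n]$ and let $h_j/k_j=[a_0;a_1,\ldots,a_j]$ be the $j$-th convergent, defined by $h_j=a_jh_{j-1}-h_{j-2}$, $k_j=a_jk_{j-1}-k_{j-2}$ with seed values $h_{-1}=1,h_{-2}=0,k_{-1}=0,k_{-2}=-1$. The one computation I would carry out is the inductive identity
\[
\begin{pmatrix} a_0 & -1 \\ 1 & 0 \end{pmatrix}\begin{pmatrix} a_1 & -1 \\ 1 & 0\end{pmatrix}\cdots\begin{pmatrix} a_j & -1 \\ 1 & 0\end{pmatrix}=\begin{pmatrix} h_j & -h_{j-1} \\ k_j & -k_{j-1}\end{pmatrix},
\]
which is immediate from the recursion and, applied as a M\"obius transformation to $\infty$, shows the first column records $h_j/k_j$. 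Taking determinants gives $h_{j-1}k_j-h_jk_{j-1}=1$ for all $j$; hence each convergent is in lowest terms and consecutive convergents span an edge of the Farey tessellation.

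With $u=h_n/k_n$, the definitions unwind to $u^a=h_{n-1}/k_{n-1}$, and since replacing $a_n$ by $a_n-1$ subtracts one copy of $(h_{n-1},k_{n-1})$ from $(h_n,k_n)$, to $u^c=(h_n-h_{n-1})/(k_n-k_{n-1})$. From the determinant identity I would then verify by three short cross-multiplications that each of $\{u,u^a\}$, $\{u,u^c\}$, $\{u^a,u^c\}$ satisfies the adjacency condition $|pq'-p'q|=1$ (this also shows $u^a,u^c$ are already in lowest terms), and that $u=(h_{n-1}+(h_n-h_{n-1}))/(k_{n-1}+(k_n-k_{n-1}))$ is literally the mediant of $u^a$ and $u^c$; this settles the adjacency and mediant assertions. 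A quick induction using $a_0\ge1$ and $a_i\ge2$ gives $0=k_{-1}<k_0<k_1<\cdots$, so the denominators are positive and strictly increasing, and the determinant identity yields $u^a-u=1/(k_{n-1}k_n)>0$ and $u-u^c=1/(k_n(k_n-k_{n-1}))>0$, so $u^c<u<u^a$.

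The real content is the extremal (``largest'' and ``smallest'') claims, and this is where I expect the main work. A reduced fraction $p/q$ with $q>0$ is Farey-adjacent to $u=h_n/k_n$ precisely when $pk_n-qh_n=\pm1$, and since $p/q-u=(pk_n-qh_n)/(qk_n)$, every neighbor above $u$ solves $pk_n-qh_n=1$ and every neighbor below $u$ solves $pk_n-qh_n=-1$. As $(h_{n-1},k_{n-1})$ solves the first equation and $(h_n-h_{n-1},k_n-k_{n-1})$ the second, and $\gcd(h_n,k_n)=1$, the positive-denominator solutions are exactly
\[
(h_{n-1}+th_n,\;k_{n-1}+tk_n)\quad\text{and}\quad(h_n-h_{n-1}+th_n,\;k_n-k_{n-1}+tk_n),\qquad t\in\Z.
\]
Because $0<k_{n-1}<k_n$ and $0<k_n-k_{n-1}<k_n$, in each family the smallest positive denominator occurs at $t=0$; and since the value $p/q=u\pm 1/(qk_n)$ is monotone in $q$, the largest neighbor above $u$ and the smallest neighbor below $u$ are both attained at $t=0$, namely $u^a$ and $u^c$. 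The only separate point is the degenerate case $n=0$: then $u=a_0$ is an integer, $u^a=h_{-1}/k_{-1}=1/0=\infty$, and $\infty$ is the Farey vertex above every rational, which I would record as matching the stated convention.
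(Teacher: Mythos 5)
Your proof is correct, and it runs on the same fuel as the paper's --- the convergent recursion $h_j=a_jh_{j-1}-h_{j-2}$, $k_j=a_jk_{j-1}-k_{j-2}$ and the determinant identity $h_{j-1}k_j-h_jk_{j-1}=1$ --- but it is organized genuinely differently. The paper proves that identity by direct induction (your matrix product is a cleaner package for the same computation) and then runs two further \emph{tail} inductions, comparing $[a_k;a_{k+1},\ldots,a_n]$ with $[a_k;a_{k+1},\ldots,a_n-1]$ and with $[a_k;a_{k+1},\ldots,a_{n-1}]$, in order to produce the edges $\{u,u^c\}$ and $\{u^a,u^c\}$. Your observation that replacing $a_n$ by $a_n-1$ simply subtracts $(h_{n-1},k_{n-1})$ from $(h_n,k_n)$ eliminates both of those inductions: all three adjacencies become one-line cross-multiplications, and the mediant identity is literally visible rather than deduced. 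The other real difference is at the extremal (``largest''/``smallest'') claims, where the paper is terse: it concludes from the triangle $\{u^c,u,u^a\}$ together with the remark that $u^a$ and $u^c$ have smaller numerators and denominators than $u$, leaving the underlying Farey combinatorics to the reader. Your parametrization of all solutions of $pk_n-qh_n=\pm1$, combined with the monotonicity of $u\pm 1/(qk_n)$ in $q$, makes that step self-contained, and your explicit treatment of the $n=0$ case records the convention $u^a=\infty$ that the paper never revisits in its proof. The trade-off is only length: the paper's ending is shorter if one grants the standard structure of the tessellation, while yours proves everything from scratch.
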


\begin{proof}
Define $\frac{p_k}{q_k}=[a_0;a_1,\ldots, a_k]$ and $p_{-1}=1,
q_{-1}=0.$ One may easily verify using induction that
\[
p_{k+1}=a_{k+1}p_k-p_{k-1}, \text{ and }\,  q_{k+1}=a_{k+1}q_k-q_{k-1}.
\]
From this one can inductively deduce that
\[
p_{k+1}q_k-p_kq_{k+1}=-1.
\]
Thus there is an edge in the Farey tessellation between $u=\frac
pq=\frac {p_n}{q_n}$ and
$u^a=\frac{p^a}{q^a}=\frac{p_{n-1}}{q_{n-1}}.$ Similarly, let
$\frac{c_k}{d_k}=[a_k;a_{k+1},\ldots, a_n]$ and
$\frac{c'_k}{d'_k}=[a_k;a_{k+1},\ldots, a_n-1]$ and notice that
$c_nd'_n-d_nc'_n=a_n-(a_n-1)=1.$ Now we see that
\[
\frac{c_k}{d_k}=a_k-\frac{1}{c_{k+1}/{d_{k+1}}}= \frac{a_{k}c_{k+1}-d_{k+1}}{c_{k+1}}
\]
and a similar expression for $\frac{c'_k}{d'_k}$ and induction yield
$c_kd'_k-d_kc'_k=1.$ In particular, there is an edge in the Farey
tessellation between $u=\frac{c_0}{d_0}$ and $u^c=\frac{c'_0}{d'_0}.$

Finally by setting $\frac{c''_k}{d''_k}=[a_k; a_{k+1},\ldots,
a_{n-1}]$ and noticing that $c''_{n-1}d'_{n-1}-d''_{n-1}c'_{n-1}=1$,
we can use the above formulas, and analogous ones, to inductively
prove that $c''_{k-1}d'_{k-1}-d''_{k-1}c'_{k-1}=1$. This establishes
an edge in the Farey tessellation between $u^c=\frac{c'_0}{d'_0}$ and
$u^a=\frac{c''_0}{d''_0}$. Since there is an edge in the Farey
tessellation between each pair of numbers in the set $\{u, u^a, u^c\}$
the lemma is established by noticing that the numerators (and
denominators) of $u^a$ and $u^c$ are both smaller than the numerator
(and denominator) of $u.$
\end{proof}

We recall that if we choose a basis for $H_1(T^2;\Z)$ then there is a
one-to-one correspondence between embedded essential oriented curves
on $T^2$ and rational numbers $\frac pq$, written in lowest common
terms. Moreover given two rational numbers $\frac pq$ and $\frac rs$
we denote their homological intersection (which also happens to be the
signed minimal intersection number) between the corresponding curves
on $T^2$ by $\frac pq \cdot \frac rs$ and it can be computed by
\[
\frac pq \cdot \frac rs= ps-rq.
\]
Notice that this number is only well defined up to sign (since the
orientation on the curve corresponding to a fraction is not
determined). Throughout this work we will only be concerned with the
absolute value of this number (if the exact number is ever needed we
will specify the orientations on the homology class corresponding to a
fraction).

\begin{lemma}\label{lem:basicrange}
Fix some positive integer $n$ and set $e_k=\frac k n$ for $k \in
\left\{1,2, \ldots\right\}$ and $\mathcal{I}=\{k\in \Z: k>1 \text{ and
} \gcd(n,k)=1\}$.  If $n\not=1$ then the intervals $J_k=(e_k^c,
e_k^a)$ for $k\in \mathcal{I}$ are all disjoint. If $n=1$ then the
intervals are nested $J_{k+1}\subset J_k$.

If $r$ is a positive rational number less than $e_k^c$ or greater than
$e_k^a$ then for any $s\in \overline{J_k}$ we have
\[
|r\cdot s|\geq \min\{|r\cdot e_k^a|, |r\cdot e_k^c|\}
\]
with equality only if $s=e_k^a$ or $e_k^c$. 

If $r\in (e_k^c,e_k)$ and $s\in (e_k, e_k^a),$ then 
\[
|r\cdot s| > |r\cdot e_k^a|.
\]
\end{lemma}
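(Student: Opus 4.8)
The plan is to translate everything into integer vectors in the plane, using the basis that the Farey triangle on $e_k$ provides. By Lemma~\ref{lem:eaec} there are edges in the Farey tessellation between each pair among $e_k^c, e_k, e_k^a$, and $e_k$ is their mediant. Representing a slope $u=\frac ab$ by the primitive vector $v_u=(a,b)$, the intersection formula turns into $u\cdot u'=\det(v_u,v_{u'})$ (vectors as columns). The Farey edge between $e_k^c$ and $e_k^a$ gives $|\det(v_{e_k^a},v_{e_k^c})|=1$, so $\alpha:=v_{e_k^a}$ and $\gamma:=v_{e_k^c}$ form a $\Z$-basis of $\Z^2$, and the mediant relation reads $v_{e_k}=\alpha+\gamma$.

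First I would write $v_r=x_1\alpha+y_1\gamma$ and $v_s=x_2\alpha+y_2\gamma$ with $x_i,y_i\in\Z$ (integers because $\alpha,\gamma$ is a lattice basis), normalized so that all coefficients are nonnegative; this is possible since both $r$ and $s$ lie in the open cone spanned by $\alpha$ and $\gamma$, which is exactly the interval $(e_k^c,e_k^a)$. The key step is to pin down the ranges of these integers from the slope hypotheses. Along this cone the slope is monotonic, increasing from $e_k^c$ (the ray of $\gamma$, where $x=0$) to $e_k^a$ (the ray of $\alpha$, where $y=0$), and the mediant $e_k$ sits exactly where $x=y$. Hence $r\in(e_k^c,e_k)$ forces $1\le x_1<y_1$, while $s\in(e_k,e_k^a)$ forces $x_2>y_2\ge 1$. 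This identification of the inequalities is the part that requires the most care, since reversing the slope/ratio correspondence would flip the final estimate; I expect it to be the main obstacle.

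With the coordinates in hand the rest is a short determinant computation. Since $|\det(\alpha,\gamma)|=1$, expanding gives $|r\cdot s|=|x_1y_2-y_1x_2|$ and $|r\cdot e_k^a|=|\det(v_r,\alpha)|=y_1$. Because $x_1<y_1$ and $y_2<x_2$ we have $x_1y_2<y_1x_2$, so $|r\cdot s|=y_1x_2-x_1y_2$, and it remains to verify $y_1x_2-x_1y_2>y_1$, i.e. $y_1(x_2-1)>x_1y_2$. Using $x_2-1\ge y_2$ and $y_1>x_1$ I would estimate
\[
y_1(x_2-1)-x_1y_2 \;\ge\; y_1y_2-x_1y_2 \;=\; (y_1-x_1)\,y_2 \;\ge\; 1,
\]
where the last bound uses $y_1-x_1\ge 1$ and $y_2\ge 1$. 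This yields the strict inequality $|r\cdot s|>|r\cdot e_k^a|$, completing the argument.
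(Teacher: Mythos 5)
The part you do prove is proved correctly, and by essentially the paper's own mechanism: the paper disposes of the third claim by noting that $r\cdot s$ is a positive integral linear combination of $r\cdot e_k$ and $r\cdot e_k^a$, which is exactly your determinant computation written in the basis $v_{e_k},v_{e_k^a}$ rather than your $\alpha,\gamma$. Your identification of the coefficient ranges is also sound: the value of $x\alpha+y\gamma$ is a monotone function of $x/y$ running from $e_k^c$ to $e_k^a$ with the mediant $e_k$ at $x=y$ (this follows from Lemma~\ref{lem:eaec}), so $r\in(e_k^c,e_k)$ gives $1\le x_1<y_1$ and $s\in(e_k,e_k^a)$ gives $x_2>y_2\ge1$, and then $y_1x_2-x_1y_2\ge y_1+(y_1-x_1)y_2\ge y_1+1>y_1$ is a valid strict estimate.

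The genuine gap is that the lemma makes three assertions and your proposal addresses only the last one. First, the statement about the intervals themselves --- that the $J_k$, $k\in\mathcal{I}$, are pairwise disjoint when $n\neq1$ and nested when $n=1$ --- is nowhere discussed; it needs a separate argument (the paper's: every rational in $(e_k^c,e_k)\cup(e_k,e_k^a)$ is a positive integral combination of $v_{e_k}$ with one of $\gamma,\alpha$, hence has denominator strictly greater than $n$, whereas each $e_{k'}$ with $\gcd(k',n)=1$ has denominator exactly $n$; the $n=1$ case is the explicit computation $e_k^c=k-1$, $e_k^a=\infty$). This is not a decoration: the disjointness is precisely what is invoked in Theorem~\ref{main} and Theorem~\ref{thm:partially}. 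Second, the middle claim, where $r$ lies \emph{outside} $[e_k^c,e_k^a]$ and $s$ ranges over $\overline{J_k}$, is also untouched, and your normalization ``both $r$ and $s$ lie in the open cone'' is simply false there: for such $r$ the coefficients $x_1,y_1$ have opposite signs. That sign pattern is in fact what makes the claim true --- one gets $|r\cdot s|=|x_1|y_2+|y_1|x_2$ with no cancellation, hence $|r\cdot s|\ge|x_1|+|y_1|>\min\{|x_1|,|y_1|\}$ unless $(x_2,y_2)$ is $(1,0)$ or $(0,1)$, i.e.\ unless $s=e_k^a$ or $s=e_k^c$, which also yields the equality characterization --- so your method extends, but the case analysis must actually be carried out. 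As written, the proposal is an (essentially correct) proof of one third of the lemma.
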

\begin{proof}
If $n=1$ then it is clear that $e_k=k$ and one easily checks that
$e_k^c=k-1$ and $e_k^a=\infty.$ So $J_k=(k-1, \infty)$.

If $n\not=1$ then we notice that any number in $(e^c_k, e_k)$ is a
mediant of $e_k$ and $e_k^c$ and hence has denominator strictly bigger
than $n$ (since the denominator of $e_k$ is $n$), thus $e_{k'}$ cannot
be in this interval for any $k'\in \mathcal{I}$. Similarly $e_{k'}$
cannot be in the interval $(e_k, e_k^a)$. Thus the intervals $J_k,
i\in \mathcal{I}$ are disjoint.

For the second statement notice that $r\cdot e_k^a$ and $r\cdot e_k^c$
have the same sign and $r\cdot s$ will be some non-negative integral
linear combination of $r\cdot e_k^a$ and $r\cdot e_k^c$.  For the last
statement note that $r\cdot s$ will be some positive integral linear
combination of $r\cdot e_k$ and $r\cdot e_k^a$.
\end{proof}

\subsection{Convex surfaces and bypasses}\label{sec:cvx}
In this subsection we discuss the main tools we will be using
throughout the paper --- convex surfaces. We assume the reader is
familiar with convex surfaces as used in \cite{EtnyreHonda01b,
Honda00a}; but, for the convenience of the reader, we recall the 
fundamental facts from the theory that we will use in this paper.

\subsubsection{Convex surfaces}
Recall a surface $\Sigma$ in a contact manifold $(M,\xi)$ is
\dfn{convex} if it has a neighborhood $\Sigma\times I$, where
$I=(-\epsilon, \epsilon)$ is some interval, and $\xi$ is $I$-invariant
in this neighborhood. Any closed surface can be $C^\infty$-perturbed
to be convex. Moreover if $L$ is a Legendrian knot on $\Sigma$ for
which the contact framing is non-positive with respect to the framing
given by $\Sigma$, then $\Sigma$ may be perturbed in a $C^0$ fashion
near $L$, but fixing $L$, and then again in a $C^\infty$ fashion away
from $L$ so that $\Sigma$ is convex.

Given a convex surface $\Sigma$ with $I$-invariant neighborhood let
$\Gamma_\Sigma \subset \Sigma$ be the multicurve where $\xi$ is
tangent to the $I$ factor. This is called the \dfn{dividing set} of
$\Sigma.$ If $\Sigma$ is oriented it is easy to see that
$\Sigma\setminus \Gamma=\Sigma_+\cup \Sigma_-$ where $\xi$ is
positively transverse to the $I$ factor along $\Sigma_+$ and
negatively transverse along $\Sigma_-$. If $L$ is a Legendrian curve
on a $\Sigma$ then the framing of $L$ given by the contact planes,
relative to the framing coming from $\Sigma$, is given by $-\frac 12
(L\cdot \Gamma)$. Moreover if $L=\partial \Sigma$ then the rotation
number of $L$ is given by $\rot(L)=\chi(\Sigma_+)-\chi(\Sigma_-)$.

\subsubsection{Convex tori}
A convex torus $T$ is said to be in \dfn{standard form} if $T$ can be
identified with $\R^2/\Z^2$ so that $\Gamma_T$ consists of $2n$
vertical curves (note $\Gamma_T$ will always have an even number of
curves and we can choose a parameterization to make them vertical) and
the characteristic foliations consists of $2n$ vertical lines of
singularities ($n$ lines of sources and $n$ lines of sinks) and the
rest of the foliation is by non-singular lines of slope $s$. See
Figure~\ref{fig:ct}.
\begin{figure}[ht]
        \relabelbox \small  {\centerline{\epsfbox{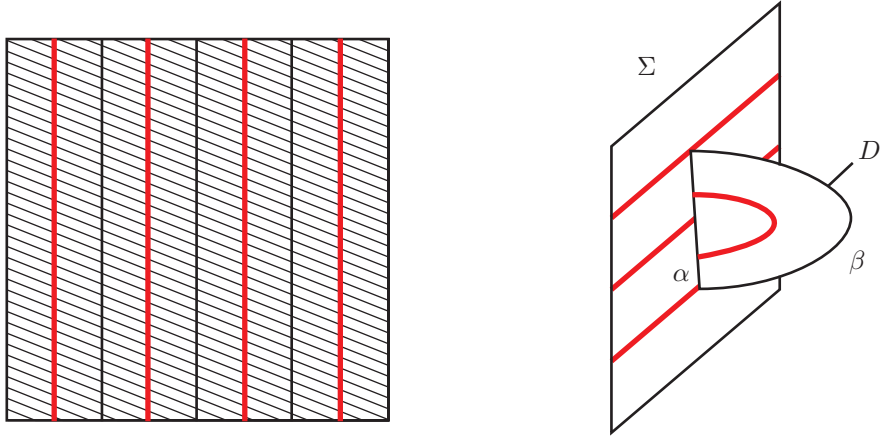}}}
          \relabel{D}{$D$}
  \relabel {S}{$\Sigma$}
  \relabel {a}{$\alpha$}
  \relabel {b}{$\beta$}
  \endrelabelbox
        \caption{Standard convex tori shown on the left and a bypass
shown on the right. The thicker curves are dividing curves.}
        \label{fig:ct}
\end{figure}
The lines of singularities are called \dfn{Legendrian divides} and the
other curves are called \dfn{ruling curves}. We notice that the Giroux
Flexibility Theorem allows us to isotope any convex torus into
standard form, \cite{EtnyreHonda01b, Honda00a}.

\subsubsection{Bypasses and tori}
Let $\Sigma$ be a convex surface and $\alpha$ a Legendrian arc in
$\Sigma$ that intersects the dividing curves $\Gamma_\Sigma$ in 3
points $p_1,p_2,p_3$ (where $p_1, p_3$ are the end points of the
arc). Then a \dfn{bypass for $\Sigma$ (along $\alpha$)}, see
Figure~\ref{fig:ct}, is a convex disk $D$ with Legendrian boundary
such that
\begin{enumerate}
\item $D\cap \Sigma=\alpha,$
\item $tb(\partial D)=-1,$
\item $\partial D= \alpha\cup \beta,$
\item $\alpha\cap\beta=\{p_1,p_3\}$ are corners of $D$ and elliptic
singularities of $D_\xi.$
\end{enumerate}
A surface $\Sigma$ locally separates the ambient manifold. If a bypass
is contained in the (local) piece of $M\setminus \Sigma$ that has
$\Sigma$ as its oriented boundary then we say the bypass will be
attached to the front of $\Sigma$ otherwise we say it is attached to
the back of $\Sigma$.

When a bypass is attached to a torus $T$ then either the dividing
curves do not change, their number increases by two, or decreases by
two, or the slope of the dividing curves changes. The slope of the
dividing curves can change only when there are two dividing
curves. (See \cite{Honda00a} for more details.) If the bypass is
attached to $T$ along a ruling curve then either the number of
dividing curves decreases by two or the slope of the dividing curves
changes. To understand the change in slope we need the following.  Let
$\mathbb{D}$ be the unit disk in $\R^2.$ Recall the \dfn{Farey
  tessellation} of $\mathbb{D}$ is constructed as follows.  Label the
point $(1,0)$ on $\partial \mathbb{D}$ by $0=\frac01$ and the point
$(-1,0)$ with $\infty=\frac10.$ Now join them by a geodesic.  If two
points $\frac{p}{q},$ $\frac{p'}{q'}$ on $\partial \mathbb{D}$ with
non-negative $y$-coordinate have been labeled then label the point on
$\partial\mathbb{D}$ half way between them (with non-negative
$y$-coordinate) by $\frac{p+p'}{q+q'}.$ Then connect this point to
$\frac{p}{q}$ by a geodesic and to $\frac{p'}{q'}$ by a
geodesic. Continue this until all positive fractions have been
assigned to points on $\partial \mathbb{D}$ with non-negative
$y$-coordinates. Now repeat this process for the points on $\partial
\mathbb{D}$ with non-positive $y$-coordinate except start with
$\infty=\frac{-1}{0}.$ See Figure~\ref{fig:ft}.
\begin{figure}[ht]
  \relabelbox \small {\epsfxsize=2.8in\centerline{\epsfbox{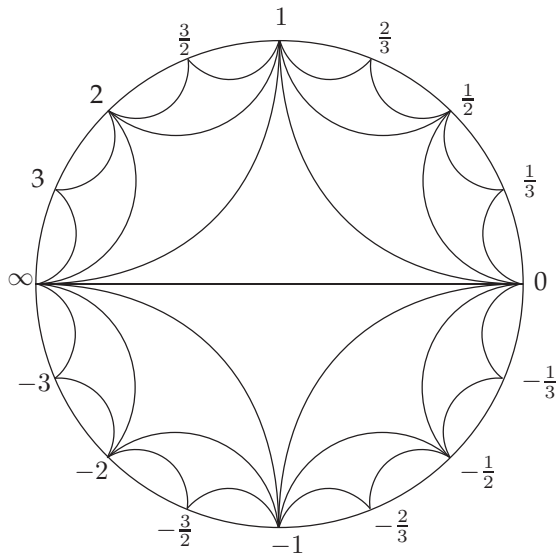}}} 
  \relabel {0}{0}
  \relabel {i}{$\infty$}
  \relabel {1}{$1$}
  \relabel {2}{2}
  \relabel {3}{3}
  \relabel {-1}{$-1$}
  \relabel {-2}{$-2$}
  \relabel {-3}{$-3$}
  \relabel {12}{$\frac12$}
  \relabel {13}{$\frac13$}
  \relabel {1}{}
  \relabel {-12}{$-\frac12$}
  \relabel {-13}{$-\frac13$}
  \relabel {23}{$\frac23$}
  \relabel {-23}{$-\frac23$}
  \relabel {32}{$\frac32$}
  \relabel {-32}{$-\frac32$}
  \endrelabelbox
        \caption{The Farey tessellation.}
        \label{fig:ft}
\end{figure}

The key result we need to know about the Farey tessellation is given
in the following theorem.
\begin{theorem}[Honda 2000, \cite{Honda00a}]\label{bpaferry}
Let $T$ be a convex torus in standard form with $|\Gamma_{T}|=2,$
dividing slope $s$ and ruling slope $r\not=s.$ Let $D$ be a bypass for
$T$ attached to the front of $T$ along a ruling curve. Let $T'$ be the
torus obtained from $T$ by attaching the bypass $D.$ Then
$|\Gamma_{T'}|=2$ and the dividing slope $s'$ of $\Gamma_{T'}$ is
determined as follows: let $[r,s]$ be the arc on $\partial\mathbb{D}$
running from $r$ counterclockwise to $s,$ then $s'$ is the point in
$[r,s]$ closest to $r$ with an edge to $s.$

If the bypass is attached to the back of $T$ then the same algorithm
works except one uses the interval $[s,r]$ on
$\partial\mathbb{D}$. \qed
\end{theorem}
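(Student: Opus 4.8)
The plan is to reduce the statement to an explicit computation in a normal form and then transport the answer around the mapping-class-group orbit using the Farey tessellation. First I would settle that $|\Gamma_{T'}|=2$. By the facts recalled just above, a bypass attached along a ruling curve either decreases the number of dividing curves by two or changes the dividing slope. Since $|\Gamma_T|=2$ and the dividing set of a convex torus in a tight neighborhood is always a nonempty collection of parallel essential curves (so it cannot be emptied), the first alternative is impossible; hence $|\Gamma_{T'}|=2$ and the slope genuinely changes, $s'\neq s$. This reduces the theorem to identifying $s'$.

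Next I would exploit naturality under the torus mapping class group. Any $\phi\in SL(2,\Z)$ is realized by a diffeomorphism of $T^2$ that extends to a contactomorphism of the $I$-invariant neighborhood, carries convex tori to convex tori and ruling and dividing slopes to their images, and acts on $\Q\cup\{\infty\}=\partial\mathbb{D}$ as an automorphism of the Farey tessellation, since it preserves the adjacency relation $|ps'-rq|=1$ and the counterclockwise cyclic order. As both the hypotheses and the conclusion are thus $SL(2,\Z)$-equivariant, I may normalize the dividing slope to $s=\infty=\frac10$. In these coordinates the dividing set is two vertical curves, the Farey neighbors of $s$ are exactly the integers, and the theorem becomes the concrete claim that a front bypass along a ruling curve of slope $r$ changes the dividing slope to the integer in the counterclockwise arc $[r,\infty]$ nearest $r$, namely $\lceil r\rceil$ for $r>0$, while a back bypass uses the arc $[\infty,r]$ and gives $\lfloor r\rfloor$.

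I would then pin down $s'$ in two stages. That $s'$ is a Farey neighbor of $s$ (an integer here) follows because the layer between $T$ and $T'$ can be taken to be a minimally twisting basic slice, whose two boundary slopes are Farey-adjacent; equivalently it drops out of the local model as the minimal possible slope change. To determine which neighbor, I would take the attaching arc $\alpha$ to be a sub-arc of a ruling curve meeting $\Gamma_T$ in three points, standardize the characteristic foliation by Giroux flexibility, and apply the standard local bypass move, which reconnects the three dividing arcs crossing $\alpha$ in the elementary fashion. Pushing $T$ across $D$ and performing Honda's edge-rounding on the resulting corners produces an explicit multicurve on $T'$; tracking how these reconnected arcs close up on the torus, together with the orientation data separating the front of $T$ from its back, identifies the new slope as the predicted nearest integer on the correct side. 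Undoing the normalization by $\phi^{-1}$, which preserves the tessellation and the counterclockwise arc, then yields the statement for arbitrary $s$.

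I expect the crux to be this final identification: passing from the purely local reconnection of three dividing arcs to the global answer ``the neighbor of $s$ closest to $r$ in the arc $[r,s]$.'' The work is essentially bookkeeping --- one must verify that after edge-rounding the arcs reassemble into exactly two parallel essential curves, and that the resulting slope depends on $r$ exactly as the tessellation predicts, with the orientation conventions giving counterclockwise (front) rather than clockwise (back). A convenient way to organize the computation is to treat first the base case in which $r$ and $s$ are Farey-adjacent, where $s'=r$ emerges directly from the local model, and then to observe that the reconnection, and hence $s'$, depends only on which arc between consecutive Farey neighbors of $s$ contains $r$. Getting these orientation conventions right, rather than reversed, is the step most prone to error.
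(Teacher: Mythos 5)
This theorem is quoted from Honda \cite{Honda00a}; the paper supplies no argument for it (the proof-box is attached to the statement itself), so the only available comparison is with Honda's original proof. Your skeleton matches that proof: $|\Gamma_{T'}|=2$ because the dividing set of a closed convex surface is never empty, which rules out the ``decrease by two'' alternative; the problem is equivariant under orientation-preserving diffeomorphisms of $T^2$, which extend over an invariant neighborhood and act on $\partial\mathbb{D}$ as automorphisms of the Farey tessellation preserving the counterclockwise order, so one may normalize $s=\infty$, where the claim becomes that a front bypass along a ruling curve of slope $r$ produces slope $\lceil r\rceil$ and a back bypass produces $\lfloor r\rfloor$; and the answer is then to be read off from the local reconnection of the dividing set followed by edge-rounding.

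The gap is that the decisive step is announced rather than carried out: essentially all of the content of the theorem lives in the ``bookkeeping'' you defer. One must actually draw $\Gamma_T$ as two vertical curves, take the attaching arc along a ruling curve of slope $r$ meeting $\Gamma_T$ in three points, perform the reconnection, round edges, and count that the result is two parallel essential curves of the asserted slope --- and this computation is also the only place where the front/back versus counterclockwise/clockwise correspondence, which you rightly flag as the error-prone point, gets pinned down; as written you have not verified that front goes with $\lceil r\rceil$ rather than $\lfloor r\rfloor$. There is also a mild circularity in your two-stage logic: you derive ``$s'$ is a Farey neighbor of $s$'' from the assertion that the bypass layer is a minimally twisting basic slice, but in Honda's development the fact that a single-bypass layer is a basic slice with Farey-adjacent boundary slopes is a \emph{consequence} of precisely this lemma, not an input to it. Likewise, the claim that the answer depends only on which arc between consecutive Farey neighbors of $s$ contains $r$ is true but is itself an output of the computation. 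The outline is the right one, but at the key moment the proposal reduces the theorem to itself.
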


\subsubsection{The Imbalance Principle}
As we see that bypasses are useful in changing dividing curves on a
surface we mention a standard way to try to find them called the
Imbalance Principle. Suppose that $\Sigma$ and $\Sigma'$ are two
disjoint convex surfaces and $A$ is a convex annulus whose interior is
disjoint from $\Sigma$ and $\Sigma'$ but its boundary is Legendrian
with one component on each surface. If $|\Gamma_\Sigma\cdot \partial
A|>|\Gamma_{\Sigma'}\cdot A|$ then there will be a dividing curve on
$A$ that cuts a disk off of $A$ that has part of its boundary on
$\Sigma$. It is now easy to use the Giroux Flexibility Theorem to show
that there is a bypass for $\Gamma$ on $A$.

\subsubsection{Discretization of Isotopy}\label{sssec:discritize}
We will frequently need to analyze what happens to the contact
geometry when we have a topological isotopy between two convex
surfaces $\Sigma$ and $\Sigma'$. This can be done by the technique of
\dfn{Isotopy Discretization} \cite{Colin97} (see also
\cite{EtnyreHonda01b} for its use in studying Legendrian knots). Given
an isotopy between $\Sigma$ and $\Sigma'$ one can find a sequence of
convex surfaces $\Sigma_1=\Sigma, \Sigma_2, \ldots, \Sigma_n=\Sigma'$
such that
\begin{enumerate}
\item all the $\Sigma_i$ are convex and
\item $\Sigma_i$ and $\Sigma_{i+1}$ are disjoint and $\Sigma_{i+1}$ is
obtained from $\Sigma_i$ by a bypass attachment.
\end{enumerate}
Thus if one is trying to understand how the contact geometry of
$M\setminus \Sigma$ and $M\setminus \Sigma'$ relate, one just needs to
analyze how the contact geometry of the pieces of $M\setminus
\Sigma_i$ changes under bypass attachment. In particular, many
arguments can be reduced from understanding a general isotopy to
understanding an isotopy between two surfaces that cobound a product
region.

There is also a relative version of Isotopy Discretization where
$\Sigma$ and $\Sigma'$ are convex surfaces with Legendrian boundary
consisting of ruling curves on a convex torus. If $\partial
\Sigma=\partial \Sigma'$ and there is a topological isotopy of
$\Sigma$ to $\Sigma'$ relative to the boundary then we can find a
discrete isotopy as described above.

\subsection{Classifying knots in a knot type}\label{sec:strategy}
In this section we briefly recall the standard strategy for
classifying Legendrian knots in a given knot type $\K$ as laid out in
\cite{Etnyre99, EtnyreHonda01b}. We begin by recalling the ``normal
form'' for a neighborhood of a Legendrian or transverse knot and the
relation between them.

\subsubsection{Standard neighborhoods of knots}

Given a Legendrian knot $L$, a \dfn{standard neighborhood} of $L$ is a
solid torus $N$ that has convex boundary with two dividing curves of
slope $1/\tb(L)$ (and of course we will usually take $\partial N$ to
be a convex torus in standard form). Conversely given any such solid
torus it is a standard neighborhood of a unique Legendrian knot. Up to
contactomorphism one can model a standard neighborhood as a
neighborhood $N'$ of the $x$-axis in $\R^3/(x\mapsto x+1)\cong
S^1\times \R^2$ with contact structure $\xi_{std}=\ker(dz-y\, dx).$
Using this model we can see that $L_\pm=\{(x, \pm\epsilon, 0)\}\subset
N'$ is a $(\pm)$-transverse curve. The image of $L_+$ in $N$ is called
the \dfn{transverse push-off} of $L$ and $L_-$ is called the
\dfn{negative transverse push-off}. One may easily check that $L_\pm$
is well-defined and compute that
\[
\sl(L_\pm)=\tb(L)\mp \rot(L).
\]

One may understand stabilizations and destabilizations of a Legendrian
knot $K$ in terms of the standard neighborhood. Specifically, inside
the standard neighborhood $N$ of $L$, $L$ can be positively stabilized
to $S_+(L)$, or negatively stabilized to $S_-(L)$. Let $N_\pm$ be a
neighborhood of the stabilization of $L$ inside $N.$ As above we can
assume that $N_\pm$ has convex boundary in standard form. It will have
dividing slope $\frac{1}{\tb(L)-1}.$ Thus the region $N\setminus
N_\pm$ is diffeomorphic to $T^2\times[0,1]$ and the contact structure
on it is easily seen to be a \dfn{basic slice}, see
\cite{Honda00a}. There are exactly two basic slices with given
dividing curves on their boundary and as there are two types of
stabilization of $L$ we see that the basic slice $N\setminus N_\pm$ is
determined by the type of stabilization done, and vice versa. Moreover
if $N$ is a standard neighborhood of $L$ then $L$ destabilizes if the
solid torus $N$ can be thickened to a solid torus $N_d$ with convex
boundary in standard form with dividing slope $\frac 1{\tb(L)+1}.$
Moreover the sign of the destabilization will be determined by the
basic slice $N_d\setminus N$. Finally, we notice that using
Theorem~\ref{bpaferry} we can destabilize $L$ by finding a bypass for
$N$ attached along a ruling curve whose slope is clockwise of
$1/(\tb(L)+1)$ (and anti-clockwise of $0$).

A neighborhood of a transverse knot $T$ can be modeled by the solid
torus $S_a=\{(\phi, (r,\theta))| r\leq a\}\subset S^1\times \R^2$ for
sufficiently small $a$, where $(r,\theta)$ are polar coordinates on
$\R^2$ and $\phi$ is the angular coordinate on $S^1$, with the contact
structure $\xi_{cyl}=\ker (d\phi + r^2\, d\theta)$.  Notice that the
tori $\partial S_b$ inside of $S_a$ have linear characteristic
foliations of slope $-b^2.$ Thus for all integers $n$ with $\frac 1{\sqrt
  n}<a$ we have tori $T_n=\partial S_{1/\sqrt{n}}$ with linear
characteristic foliation of slope $-\frac 1n.$ Let $L_n$ be a leaf of
the characteristic foliation of $T_n.$ Any Legendrian $L$ Legendrian
isotopic to one of the $L_n$ so constructed will be called a
\dfn{Legendrian approximation} of $T.$

\begin{lemma}[Etnyre-Honda 2001, \cite{EtnyreHonda01b}]\label{legapprox}
If $L_n$ is a Legendrian approximation of the transverse knot $T$ then
$(L_n)_+$ is transversely isotopic to $T.$ Moreover, $L_{n+1}$ is
Legendrian isotopic to the negative stabilization of $L_n.$\qed
\end{lemma}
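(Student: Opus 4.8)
The plan is to carry out everything inside the explicit model $(S^1\times\R^2,\xi_{cyl}=\ker(d\phi+r^2\,d\theta))$, in which $T=\{r=0\}$ is the transverse core, the tori $T_n=\partial S_{1/\sqrt n}$ carry the linear characteristic foliation of slope $-1/n$, and $L_n$ is a leaf of that foliation. Note that the Reeb vector field of $\alpha=d\phi+r^2\,d\theta$ is $\partial_\phi$, so $T$ is the Reeb orbit $\{r=0\}$ oriented by $+\partial_\phi$; both assertions will be read off from how the $L_n$ sit relative to this core and to one another.

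For the first statement I would produce an explicit transverse isotopy. After a transverse isotopy I may assume $(L_n)_+$ is a curve $\gamma(u)=(\phi(u),r_0,\theta(u))$ of constant radius $r_0<1/\sqrt n$ that crosses every leaf of the foliation positively, i.e.\ $\dot\phi+r_0^2\dot\theta>0$; since it is topologically the core it winds once around $+\phi$, and one checks it may be arranged so that $\dot\phi>0$ everywhere. Then the radial contraction $\gamma_t(u)=(\phi(u),(1-t)r_0,\theta(u))$, $t\in[0,1)$, satisfies $\alpha(\dot\gamma_t)=\dot\phi+(1-t)^2r_0^2\dot\theta$; since this is a linear, hence extremal-at-the-endpoints, function of $(1-t)^2\in[0,1]$, it is bounded below by $\min\{\dot\phi,\ \dot\phi+r_0^2\dot\theta\}>0$ for every $t$. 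Hence $\gamma_t$ stays positively transverse while shrinking onto the core, giving that $(L_n)_+$ is transversely isotopic to $T$. The one point requiring care here is the co-orientation bookkeeping: I must confirm that it is the positive push-off $(L_n)_+$, and not $(L_n)_-$, that pushes to the inside and limits onto $T$ with its given orientation, so that the statement is matched rather than its mirror.

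For the second statement I would first record that the surface framing of the slope $-1/n$ leaf gives $\tb(L_n)=-n$, so that (after perturbing to standard convex form, with two dividing curves of slope $-1/n=1/\tb(L_n)$) the torus $T_n$ is the boundary of a standard neighborhood of $L_n$, and likewise $T_{n+1}$ for $L_{n+1}$ with $\tb(L_{n+1})=-(n+1)=\tb(L_n)-1$. Since $-1/n$ and $-1/(n+1)$ are joined by an edge of the Farey tessellation and $\xi_{cyl}$ is tight, the product region $\overline{S_{1/\sqrt n}\setminus S_{1/\sqrt{n+1}}}\cong T^2\times[0,1]$ is a basic slice; by the discussion of standard neighborhoods recalled above this exhibits $S_{1/\sqrt{n+1}}$ as a neighborhood of a single stabilization $S_\pm(L_n)$, so $L_{n+1}$ is Legendrian isotopic to $S_\pm(L_n)$ for exactly one sign. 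To pin the sign down I would invoke the first statement: applied to both $n$ and $n+1$ it gives that $(L_n)_+$ and $(L_{n+1})_+$ are both transversely isotopic to $T$, whence $\sl((L_{n+1})_+)=\sl((L_n)_+)$. Since $\sl((S_+(L_n))_+)=\tb(L_n)-\rot(L_n)-2$ while $\sl((S_-(L_n))_+)=\tb(L_n)-\rot(L_n)$, only the negative stabilization is consistent, and so $L_{n+1}$ is Legendrian isotopic to $S_-(L_n)$.

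As in the first part, the genuine obstacle throughout is the sign/orientation bookkeeping: correctly matching the co-orientation of $\xi_{cyl}$ to the conventions $\sl(L_\pm)=\tb(L)\mp\rot(L)$ and to the stabilization signs. Everything else---the transverse contraction, the framing computation $\tb(L_n)=-n$, and the identification of the product region as a basic slice via Farey-adjacency---is routine once the model is fixed, and the self-linking comparison then settles the sign in the second part without any separate computation of the basic-slice type.
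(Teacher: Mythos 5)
The paper does not actually prove this lemma --- it is quoted from \cite{EtnyreHonda01b} with only the model $(S_a,\xi_{cyl})$ set up beforehand --- so the comparison is really with the standard argument from that reference, and your proposal follows essentially that route: work in the explicit model, contract radially for the first claim, and identify $S_{1/\sqrt{n}}\setminus S_{1/\sqrt{n+1}}$ as a basic slice for the second. The argument is sound. Two remarks. First, the step you treat as routine --- that $(L_n)_+$ may be transversely isotoped to a constant-radius curve with $\dot\phi>0$ --- is where the actual content of the first claim lives: the push-off is defined inside a thin standard neighborhood of the leaf $L_n$, which straddles $T_n$, so getting it onto a concentric torus $\partial S_{r_0}$ with $r_0<1/\sqrt{n}$ requires an explicit isotopy in the model (the radial contraction afterwards is, as you say, trivial). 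Second, the co-orientation worry you flag for part one is less dangerous than you fear: transverse isotopy preserves the sign of transversality, and $T=\{r=0\}$ oriented by $+\partial_\phi$ is positively transverse, so only the positively transverse push-off could possibly be isotopic to it; the only thing to confirm is that the curve $\{(x,+\epsilon,0)\}$ in the paper's normal form is the positively transverse one, which is a one-line check of conventions. Your method of pinning down the stabilization sign in part two --- comparing $\sl((L_{n+1})_+)=\sl((L_n)_+)$ against $\sl(L_+)=\tb(L)-\rot(L)$ to exclude $S_+$ --- is a clean alternative to computing the relative Euler class of the basic slice directly, and it is exactly the mechanism underlying Theorem~\ref{tequivaletsl}.
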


This lemma is a key ingredient in the following result from which our
transverse classification results will follow from our Legendrian
classification results.

\begin{theorem}[Etnyre-Honda 2001, \cite{EtnyreHonda01b}]\label{tequivaletsl}
The classification of transverse knots up to transverse isotopy is
equivalent to the classification of Legendrian knots up to negative
stabilization and Legendrian isotopy.
\end{theorem}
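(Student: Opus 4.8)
The plan is to realize the equivalence as a pair of mutually inverse bijections. Write $\sim$ for the equivalence relation on $\mathcal{L}(\K)$ generated by Legendrian isotopy together with negative stabilization $L\sim S_-(L)$, and consider the two maps $\Psi\colon \mathcal{L}(\K)/{\sim}\to\mathcal{T}(\K)$ sending the class of $L$ to the transverse isotopy class of its positive transverse push-off $L_+$, and $\Phi\colon\mathcal{T}(\K)\to\mathcal{L}(\K)/{\sim}$ sending $T$ to the class of any Legendrian approximation of $T$. The theorem reduces to showing that $\Psi$ and $\Phi$ are well defined and satisfy $\Psi\circ\Phi=\mathrm{id}$ and $\Phi\circ\Psi=\mathrm{id}$. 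The one preliminary fact that glues the two constructions together is that, in the local model, every Legendrian knot $L$ is itself a Legendrian approximation of its positive push-off $L_+$; I would verify this directly from the normal-form description of a standard neighborhood, taking $T=L_+$ and $L$ as the leaf $L_n$ on the appropriate torus.

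Granting this, well-definedness of $\Psi$ is short. Legendrian isotopic knots have contact isotopic standard neighborhoods, so their push-offs are transversely isotopic, and hence $L_+$ depends only on the Legendrian isotopy class of $L$. To see that $\Psi$ respects negative stabilization, set $T=L_+$; since $L$ is a Legendrian approximation of $T$, the ``moreover'' part of Lemma~\ref{legapprox} gives that $S_-(L)$ is (Legendrian isotopic to) the next approximation $L_{n+1}$, and the first part gives $(S_-(L))_+\simeq T=L_+$. Thus $\Psi$ descends to the quotient. The composite $\Psi\circ\Phi=\mathrm{id}$ is then immediate: a Legendrian approximation $L$ of a transverse knot $T$ has $L_+$ transversely isotopic to $T$ by the first statement of Lemma~\ref{legapprox}.

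The substance of the argument, and the main obstacle, is the well-definedness of $\Phi$: I must show that any two Legendrian approximations $L,L'$ of a fixed transverse knot $T$ become Legendrian isotopic after each is negatively stabilized finitely many times. Here I would invoke the standard uniqueness of a transverse knot's standard neighborhood up to contact isotopy (see \cite{EtnyreHonda01b}) to arrange that the two relevant neighborhoods $S'\subset S$ of $T$ are nested with convex boundaries in standard form, with $L$, $L'$ realized as ruling curves on $\partial S$, $\partial S'$. Comparing $\partial S$ and $\partial S'$ with the preferred family $T_n=\partial S_{1/\sqrt n}$ of the cylindrical model and applying Lemma~\ref{legapprox} shows that passing one level outward through this family replaces an approximation by its negative stabilization. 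The remaining work is to handle tori not lying on this preferred family: using Isotopy Discretization (Subsection~\ref{sssec:discritize}) together with the basic-slice description of stabilization, I would analyze the product region $T^2\times[0,1]$ between the two tori one bypass at a time and check that each bypass that changes the approximation does so by a \emph{negative} stabilization. The delicate point is precisely controlling these signs, ruling out positive stabilizations appearing and keeping every intermediate torus a neighborhood boundary of $T$; this is where the sign data carried by the basic slices, read against the relation $\sl(L_\pm)=\tb(L)\mp\rot(L)$, must be made to do the work.

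Finally, to obtain $\Phi\circ\Psi=\mathrm{id}$, I would start from a class $[L]$, form $T=L_+$, and compute $\Phi(T)$ as the class of an approximation of $L_+$; by the preliminary fact $L$ is such an approximation, so $\Phi(\Psi([L]))=[L]$ once $\Phi$ is known to be well defined. Assembling the four points — the well-definedness of $\Psi$ and of $\Phi$ and the two composite identities — yields the asserted equivalence of the two classification problems, and in particular lets every transverse classification in the paper be deduced from its Legendrian counterpart by quotienting out negative stabilization.
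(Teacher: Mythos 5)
The paper itself offers no proof of this statement: it is imported verbatim from \cite{EtnyreHonda01b} (note the attribution in the theorem header), so the only possible comparison is with the original Etnyre--Honda argument, whose overall architecture your proposal correctly reproduces --- the positive transverse push-off and Legendrian approximation inducing mutually inverse bijections between $\mathcal{T}(\K)$ and $\mathcal{L}(\K)$ modulo negative stabilization. The formal pieces of your plan are fine: granting Lemma~\ref{legapprox} and your preliminary fact that $L$ is itself a Legendrian approximation of $L_+$ (a model computation), the well-definedness of $\Psi$ and both composite identities follow exactly as you say.

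The genuine gap is that the well-definedness of $\Phi$ --- that any two Legendrian approximations of a fixed transverse knot admit a common negative stabilization --- is never actually proved, and this is not a peripheral verification: it is the entire content of the theorem, since everything else is formal once Lemma~\ref{legapprox} is granted. Your sketch defers precisely the decisive point (``the delicate point is precisely controlling these signs \dots must be made to do the work''), which is a promissory note rather than an argument. Moreover, the mechanism you propose is not obviously the right one. Isotopy discretization between the two neighborhood boundaries produces intermediate convex tori that need not bound standard neighborhoods of the transverse knot, and a bypass attachment on a torus does not carry a Legendrian curve on the old torus to one on the new torus, so ``check that each bypass that changes the approximation does so by a negative stabilization'' does not parse as stated: the relation between approximations on different tori is mediated by the transverse core inside, not by following a single Legendrian through the isotopy. (A smaller slip of the same kind: the approximations $L_n$ are leaves of linear foliations --- Legendrian divides after perturbing the tori $T_n$ to be convex --- not ruling curves.) To close the gap one needs a concrete input and must use it: either the uniqueness of the standard neighborhood of a transverse knot up to contact isotopy, with enough control to carry an arbitrary approximation into the preferred family $\{L_n\}$ of a single model where Lemma~\ref{legapprox} finishes the job, or else the classification of tight contact structures on the thickened torus between the two boundaries, showing it factors into basic slices all of negative sign (a relative Euler class computation). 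As written, your proof establishes only the easy half of the equivalence.
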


\subsubsection{Classification strategy}\label{sssec:strategy}
The classification of Legendrian knots in a given knot type can be
done in a (roughly) three step process.

\smallskip

\noindent
{\bf Step I ---} {\em Identify the maximal Thurston-Bennequin
invariant of $\K$ and classify Legendrian knots realizing this.}

\noindent
{\bf Step II ---} {\em Identify and classify the non-maximal
Thurston-Bennequin Legendrian knots in $\K$ that do not destabilize
and prove that all other knots destabilize to one of these identified
knots.}

\noindent
{\bf Step III ---} {\em Determine which stabilizations of the maximal
Thurston-Bennequin invariant knots and non-destabilizable knots are
Legendrian isotopic.}

\smallskip 

As stabilization of a Legendrian knot is well defined and positive and
negative stabilizations commute, it is clear that these steps will
yield a classification of Legendrian knots in the knot type $\K$.

Step II is facilitated by the observation above that bypasses attached
to appropriate ruling curves of a standard neighborhood of a
Legendrian knot yield destabilizations. Similarly, if $L$ is a
Legendrian knot contained in a convex surface $\Sigma$ (and the
framing given to $L$ by $\Sigma$ is less than or equal to the framing
given by a Seifert surface) and there is a bypass for $L$ on $\Sigma$
then this leads to a destabilization of $L$. Moreover one can find
such a bypass in some cases by the Imbalance Principle discussed
above.

\subsubsection{Contact isotopy and contactomorphism}\label{sssec:isotopy}
We begin by recalling a result of Eliashberg concerning the
contactomorphism group of the standard contact structure $\xi_{std}$
on $S^3$. Fix a point $p$ in $S^3$ and let $\hbox{\em Diff}_0(S^3)$ be
the group of orientation-preserving diffeomorphisms of $S^3$ that fix
the plane $\xi_{std}(p),$ and let $\hbox{\em Diff}_{\xi_{std}}$ be the
group of diffeomorphisms of $S^3$ that preserve $\xi_{std}$.
\begin{theorem}[Eliashberg 1992, \cite{Eliashberg92a}]\label{thm:htpyequiv}
	The natural inclusion of 
	$$\hbox{Diff}_{\xi_{std}}\hookrightarrow \hbox{Diff}_0(S^3)$$ is a
	weak homotopy equivalence. \qed
\end{theorem}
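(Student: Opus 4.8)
The plan is to deduce Theorem~\ref{thm:htpyequiv} from the weak contractibility of the space of tight contact structures on $S^3$, established by Eliashberg's method of filling by holomorphic discs. First I would set up a fibration. Let $\mathcal{O}$ be the space of co-oriented contact structures on $S^3$ that agree with $\xi_{std}$ at the marked point $p$ and are isotopic to $\xi_{std}$, and let $\mathrm{Diff}_0(S^3)$ act on it by pushforward $\phi\cdot\xi=\phi_*\xi$ (this is well defined because elements of $\mathrm{Diff}_0(S^3)$ fix $p$ and the plane $\xi_{std}(p)$). By Gray stability together with Eliashberg's classification of tight contact structures on $S^3$, every tight contact structure homotopic to $\xi_{std}$ as a plane field is actually isotopic to it, so this action is transitive onto the relevant component, with stabilizer exactly $\mathrm{Diff}_{\xi_{std}}$. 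Checking that the evaluation map $\phi\mapsto\phi_*\xi_{std}$ is a locally trivial fibration then yields
\[
\mathrm{Diff}_{\xi_{std}}\hookrightarrow\mathrm{Diff}_0(S^3)\longrightarrow \mathcal{O}.
\]

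The heart of the argument, and the step I expect to be the main obstacle, is to prove that $\mathcal{O}$ is weakly contractible; once this is known the theorem follows immediately from the long exact sequence of the fibration. I would establish weak contractibility by the filling-by-holomorphic-discs technique. Realize $(S^3,\xi_{std})$ as the boundary of the standard symplectic ball $B^4\subset\C^2$, with $\xi_{std}$ the field of complex tangencies. Given any $\xi\in\mathcal{O}$, choose a symplectic form on $B^4$ together with a tamed almost complex structure $J$ whose field of complex tangencies along $\partial B^4$ induces $\xi$. The Bishop/Gromov--Eliashberg theory then produces a smooth foliation of $B^4$ by $J$-holomorphic discs with boundary on $S^3$, whose leaf space is $S^2$ and whose boundary circles foliate $S^3$; this canonically straightens $\xi$ to $\xi_{std}$. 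The essential point is that \emph{all} the auxiliary choices (the compatible $J$, the resulting disc family, and its parametrization) range over contractible or canonically determined spaces, so the construction depends continuously on $\xi$.

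To turn this into weak contractibility I would run the construction in parametrized form: given a map $f\colon S^k\to\mathcal{O}$, apply the holomorphic filling simultaneously to the family $\{f(\theta)\}_{\theta\in S^k}$ to obtain a continuous family of contactomorphisms trivializing $f$, hence a nullhomotopy. The analytic input that must be made uniform over parameters is where the real difficulty lies: one needs (i) existence and Gromov compactness of the moduli of holomorphic discs with no bubbling or boundary degeneration, (ii) transversality, so that the moduli space is cut out cleanly and is diffeomorphic to $S^2$, and (iii) smooth dependence of the entire disc filling on both the boundary data and the auxiliary $J$, all holding in families. Granting these parametrized analytic facts---which is exactly the technical content of \cite{Eliashberg92a}---the long exact sequence yields that $\pi_k(\mathrm{Diff}_{\xi_{std}})\to\pi_k(\mathrm{Diff}_0(S^3))$ is an isomorphism for every $k$, establishing the weak homotopy equivalence.
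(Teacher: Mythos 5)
The paper does not prove this statement at all: it is imported verbatim from Eliashberg's paper \cite{Eliashberg92a} (note the \verb"\qed" attached to the statement), so there is no internal proof to compare against. Your outline is, however, a faithful reconstruction of the standard architecture of Eliashberg's argument: one fibers $\hbox{Diff}_0(S^3)$ over the space of tight contact structures agreeing with $\xi_{std}(p)$ at $p$ via $\phi\mapsto\phi_*\xi_{std}$ (transitivity onto that space coming from Gray stability plus the uniqueness of the tight contact structure on $S^3$, with fiber $\hbox{Diff}_{\xi_{std}}$), and then the theorem reduces via the long exact sequence to the weak contractibility of the base, which is exactly the hard analytic theorem Eliashberg proves by parametrized filling with holomorphic discs. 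Two caveats. First, a descriptive slip: the disc filling of $B^4$ rel $S^3$ used in this argument is by the family of parallel complex lines $\{z_1=c\}\cap B^4$, whose leaf space is a closed $2$-disc (with the boundary circle corresponding to the point-leaves where the discs degenerate), not $S^2$; the $S^2$ you may be thinking of is the leaf space in the closed-manifold setting. Second, you correctly identify that the entire analytic weight of the proof --- parametrized existence, Gromov compactness without bubbling, transversality, and smooth dependence of the filling on $\xi$ and $J$ --- is precisely the content of \cite{Eliashberg92a}, and you defer it there; since the paper under review defers the whole theorem to the same reference, this is an acceptable level of rigor, but be aware that your ``proof'' is really a proof modulo the main theorem of the cited paper rather than an independent argument.
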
 

Using this fact it is clear that if one has a contactomorphism $\phi$
of $(S^3, \xi_{std})$ that takes a set $S\subset S^3$ to $S'\subset
S^3$, then there is a contact isotopy of $(S^3, \xi_{std})$ that takes
$S$ to $S'$. In particular, if one is trying to show that two
embeddings of a contact structure on a torus are contact isotopic then
one merely needs to construct a contactomorphism that takes one torus
to the other.  Similarly to show two Legendrian knots are Legendrian
isotopic one only needs to construct a contactomorphism that takes one
knot to the other (or takes a standard neighborhood of one of the
knots to the other, that is understand the contactomorphism type of
the complement of the standard neighborhood).

\subsection{Computations of $\tb, \rot$ and $\overline{\tb}$}\label{sec:cablefacts}
In this subsection we collect various facts that are useful in
computing the classical invariants of Legendrian knots on tori.

\subsubsection{Rotation numbers for curves on convex tori}\label{sec:rotationfunction}
Let $T$ be a convex torus in a contact manifold $(M,\xi),$ where $\xi$
has Euler class 0.  Now we define an invariant of homology classes of
curves on $T.$ Let $v$ be any globally non-zero section of $\xi$ and
$w$ a section of $\xi\vert_T$ that is transverse to and twists (with
$\xi$) along the Legendrian ruling curves and is tangent to the
Legendrian divides. If $\gamma$ is a closed oriented curve on $T$ then
set $f_T(\gamma)$ equal to the rotation of $v$ relative $w$ along
$\gamma.$ One may check the following properties (cf. \cite{Etnyre99,
EtnyreHonda01b}).
\begin{enumerate}
	\item The function $f_T$ is well-defined on homology classes.
	\item The function $f_T$ is linear.
	\item The function $f_T$ is unchanged if we isotope $T$
through convex tori in standard form.
	\item If $\gamma$ is a $(r,s)$-ruling curve or Legendrian
divide then $f_T(\gamma)=r(\gamma).$
\end{enumerate}

\subsubsection{Legendrian knots on tori}
We recall two simple lemmas from \cite{EtnyreHonda05}. The first
concerns the computation of the Thurston-Bennequin invariant for 
cables.

\begin{lemma} \label{tbcomp}   
Let $\K$ be a knot type and $N$ a solid torus representing $\K$ whose
boundary is a standard convex torus.  Suppose that $L\in
\mathcal{L}(\K_{(p,q)})$ is contained in $\partial N$.
\begin{enumerate}
\item Suppose $L_{(p,q)}$ is a Legendrian divide and 
$\mbox{slope}(\Gamma_{\partial N(\K)})={\frac qp}$.  Then 
\[
\tb(L_{(p,q)})=pq.
\] 
\item Suppose $L_{(p,q)}$ is a Legendrian ruling curve 
and $\mbox{slope} (\Gamma_{\partial N(\K)})={\frac{q'}{p'}}$. Then  
\[
\tb(L_{(p,q)})=pq- |pq'-qp'|.
\] 
\end{enumerate}\qed
\end{lemma}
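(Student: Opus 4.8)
The plan is to write $\tb(L_{(p,q)})$ as a sum of two framings and compute each separately. Recall that $\tb(L)$ measures the contact framing of $L$ against the Seifert framing, and I will factor this through the framing that $\partial N$ induces on $L$. Writing $\mathrm{fr}_\Sigma$ for the framing coming from $\Sigma=\partial N$ and $\mathrm{fr}_0$ for the Seifert framing, I have
\[
\tb(L_{(p,q)}) = \big(\mathrm{fr}_\Sigma - \mathrm{fr}_0\big) + \big(\text{contact framing} - \mathrm{fr}_\Sigma\big).
\]
The second term is exactly the twisting of the contact planes relative to $\Sigma$ along $L$, which by the fact recalled in Subsection~\ref{sec:cvx} equals $-\tfrac12\,(L\cdot \Gamma_{\partial N})$, the intersection counted geometrically. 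So the whole computation reduces to (a) evaluating the surface framing $\mathrm{fr}_\Sigma-\mathrm{fr}_0$ of a $(p,q)$-curve, and (b) counting $L\cdot\Gamma_{\partial N}$ in each of the two cases.

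For step (a), I would show that the $\partial N$-framing of any $(p,q)$-curve on $\partial N$ exceeds its Seifert framing by exactly $pq$, independently of the knot type $\K$. Pushing a parallel surface copy $C'$ of $L_{(p,q)}$ slightly into $N$, its class in $H_1(N)$ is $q[\text{core}]$, since in $N$ the meridian $\mu$ is null-homologous while the longitude $\lambda$, taken to be the Seifert longitude of $\K$, is homologous to the core. Therefore
\[
\mathrm{fr}_\Sigma - \mathrm{fr}_0 = \mathrm{lk}(L_{(p,q)}, C') = q\cdot \mathrm{lk}(L_{(p,q)}, \text{core}) = q\cdot p = pq,
\]
where $\mathrm{lk}(L_{(p,q)},\text{core})=p$ because the meridian links the core once and the Seifert longitude links it zero times, so only the $\mu$-coefficient contributes. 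The crucial point is that using the Seifert longitude in the basis makes this number independent of how $\K$ is knotted.

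It then remains to assemble the two cases. In case (1), $L_{(p,q)}$ is a Legendrian divide, hence isotopic on $\partial N$ to the dividing curves, so $L\cdot\Gamma_{\partial N}=0$, the relative twisting vanishes, and $\tb(L_{(p,q)})=pq$. In case (2), $L_{(p,q)}$ is a ruling curve of slope $q/p$ while the two dividing curves have slope $q'/p'$; the geometric intersection of a single $(p,q)$-curve with a single $(p',q')$-curve equals the absolute homological intersection $|pq'-qp'|$ by the formula of Subsection~\ref{sec:continuedfrac}, and there are two dividing curves, so $L\cdot\Gamma_{\partial N}=2|pq'-qp'|$. Thus the relative twisting is $-|pq'-qp'|$ and $\tb(L_{(p,q)})=pq-|pq'-qp'|$.

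The main obstacle, and the only genuinely geometric step, is (a): verifying that the surface framing of the $(p,q)$-cable is $pq$ and, in particular, that it does not depend on the ambient knot type $\K$. Everything else is either a direct application of the quoted twisting formula or elementary intersection-number bookkeeping, where the one subtlety is remembering the factor of two coming from the two dividing curves on a standard convex torus.
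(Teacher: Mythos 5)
Your argument is correct and is essentially the intended one: the paper states this lemma without proof (it is recalled from Etnyre--Honda's cabling paper), and the standard argument is exactly your decomposition of $\tb$ into the surface framing contribution $pq$ (computed via the homology class of the push-off in $N$ and the Seifert longitude) plus the contact twisting $-\tfrac12\,(L\cdot \Gamma_{\partial N})$. The only point worth flagging is that your case~(2) implicitly assumes $\partial N$ carries exactly two dividing curves; that is the situation in which the stated formula $pq-|pq'-qp'|$ is meant to apply, and your case~(1) is valid for any number of dividing curves.
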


A simple consequence of the discussion in
Subsection~\ref{sec:rotationfunction} yields the following computation
of the rotation number for cables.

\begin{lemma}  \label{rcomp}
Let $\K$ be a knot type and $N$ a solid torus representing $\K$ whose
boundary is a standard convex torus.  Suppose that $L\in
\mathcal{L}(\K_{(p,q)})$ is contained in $\partial N$.  Then $$
r(L_{(p,q)})= p \cdot r(\partial D) + q \cdot r(\partial \Sigma),$$
where $D$ is a convex meridional disk of $N$ with Legendrian boundary
on a contact-isotopic copy of the convex surface $\partial N$, and
$\Sigma$ is a convex Seifert surface with Legendrian boundary in
$\mathcal{L}( \K)$ which is contained in a contact-isotopic copy of
$\partial N(\K)$.  \qed
\end{lemma}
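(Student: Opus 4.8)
The plan is to reduce the entire computation to the rotation function $f_T$ on homology classes of curves on a convex torus introduced in Subsection~\ref{sec:rotationfunction}, and to exploit its linearity. Since $\partial N$ is a standard convex torus and $L=L_{(p,q)}$ is a Legendrian curve lying on it, $L$ is either a ruling curve or a Legendrian divide; hence the property that $f_T$ agrees with the rotation number on such curves gives $r(L_{(p,q)})=f_{\partial N}([L_{(p,q)}])$ directly. The whole argument then consists of expanding this single quantity.

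First I would fix the longitude--meridian basis $\lambda,\mu$ for $H_1(\partial N;\Z)$ and, using the cabling convention $s[\lambda]+r[\mu]$ recorded in the introduction, write the homology class of the cable as $[L_{(p,q)}]=q[\lambda]+p[\mu]$. This is the one place where orientations must be tracked, since $f_T$ is only defined once orientations on the curves are chosen. Applying linearity of $f_T$ then yields
\[
r(L_{(p,q)})=f_{\partial N}\bigl(q[\lambda]+p[\mu]\bigr)=q\,f_{\partial N}([\lambda])+p\,f_{\partial N}([\mu]).
\]

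Next I would identify the two terms with the rotation numbers in the statement. The meridian class $[\mu]$ is represented by $\partial D$, a Legendrian meridional curve lying on a contact-isotopic copy of $\partial N$; since $f_T$ is unchanged under isotopy through standard convex tori, and since it agrees with the rotation number on ruling curves and Legendrian divides, we obtain $f_{\partial N}([\mu])=r(\partial D)$. Identically, the longitude class $[\lambda]$ is represented by $\partial\Sigma$, a Legendrian longitude in $\mathcal{L}(\K)$ lying on a contact-isotopic copy of $\partial N$, and the same two properties give $f_{\partial N}([\lambda])=r(\partial\Sigma)$. Substituting produces $r(L_{(p,q)})=p\,r(\partial D)+q\,r(\partial\Sigma)$, as claimed.

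Because the content is entirely packaged in the four listed properties of $f_T$, there is no serious obstacle here. The only point requiring genuine care is that $\partial D$ and $\partial\Sigma$ do not literally lie on $\partial N$ but on contact-isotopic copies of it, so one must invoke the isotopy-invariance of $f_T$ rather than reading it off $\partial N$ directly; this is exactly what makes the hypotheses about $D$ and $\Sigma$ living on such copies relevant. A secondary bookkeeping matter is confirming that the chosen orientations assign the coefficient $p$ to $r(\partial D)$ and $q$ to $r(\partial\Sigma)$, and not the reverse, which is dictated by the convention $s[\lambda]+r[\mu]$ together with the identification of $\partial D$ with the meridian and $\partial\Sigma$ with the longitude.
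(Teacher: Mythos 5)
Your argument is correct and is exactly the route the paper intends: the lemma is stated as ``a simple consequence of the discussion in Subsection~\ref{sec:rotationfunction},'' i.e.\ the linearity of $f_T$, its agreement with the rotation number on ruling curves and Legendrian divides, and its invariance under isotopy through standard convex tori, applied to the decomposition $[L_{(p,q)}]=q[\lambda]+p[\mu]$ forced by the paper's cabling convention. The paper omits the details (citing \cite{EtnyreHonda05}), and your write-up supplies precisely those details with the coefficients attached to $r(\partial D)$ and $r(\partial\Sigma)$ correctly.
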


We end with a lemma that was established as Claim~4.2 in
\cite{EtnyreHonda05}. Recall that the {\em contact width} of a knot
type $\K$ is given by
\[
w(\K) = \textrm{sup}\frac{1}{\textrm{slope}(\Gamma_{\partial S})}
\]
where here $S$ ranges over all solid tori with convex boundary
representing $\K$.

\begin{lemma}\label{lem:maxtb}
Given a knot type $\K$, suppose $(r,s)$ is a pair of relatively prime
integers such that $\frac rs<w(\K)$. Then the maximal
Thurston-Bennequin invariant of $\K_{(r,s)}$ is
\[
\overline{\tb}(\K_{(r,s)})=rs.
\]\qed
\end{lemma}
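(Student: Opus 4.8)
The plan is to prove the two inequalities $\overline{\tb}(\K_{(r,s)})\ge rs$ and $\overline{\tb}(\K_{(r,s)})\le rs$ separately. Both amount to placing Legendrian representatives of $\K_{(r,s)}$ as ruling curves or Legendrian divides on a convex torus bounding a solid torus in the knot type $\K$, and then reading off the Thurston--Bennequin invariant from Lemma~\ref{tbcomp}.

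For the lower bound I would first invoke the hypothesis $\frac rs<w(\K)$ and the definition of the contact width to extract a solid torus $S$ representing $\K$ with convex boundary in standard form whose dividing slope $m$ satisfies $\frac1m>\frac rs$; in the longitude--meridian basis this places $m$ strictly on the arc of the Farey tessellation running from $m$ to the cabling slope $\frac sr$. After arranging the ruling slope of $\partial S$ to be $\frac sr$, I would produce bypasses from the inside of $S$ by applying the Imbalance Principle of Subsection~\ref{sec:cvx} to a convex annulus cobounded by $\partial S$ and a parallel push-off, and then appeal to Theorem~\ref{bpaferry} to conclude that each such bypass moves the dividing slope along the Farey tessellation toward $\frac sr$. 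Because the inequality $\frac rs<w(\K)$ is strict, $\frac sr$ is an interior slope and is reached after finitely many steps, yielding a convex torus $T'\subset S$ with dividing slope exactly $\frac sr$. A Legendrian divide on $T'$ then represents $\K_{(r,s)}$, and Lemma~\ref{tbcomp}(1) gives it Thurston--Bennequin invariant $rs$, so $\overline{\tb}(\K_{(r,s)})\ge rs$.

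For the upper bound I would take an arbitrary $L\in\mathcal{L}(\K_{(r,s)})$ and choose a solid torus $V$ representing $\K$ so that $L$ lies on $\partial V$ as an embedded $(r,s)$-curve. Once $\partial V$ is made convex with $L$ a Legendrian ruling curve (or divide) of slope $\frac sr$, the framing that $\partial V$ induces on $L$ differs from the Seifert framing by exactly $rs$, while the contact framing of $L$ relative to $\partial V$ equals $-\tfrac12(L\cdot\Gamma_{\partial V})\le 0$ by the formula recalled in Subsection~\ref{sec:cvx}. Combining these (equivalently, applying Lemma~\ref{tbcomp} with its correction term $|rq'-sp'|\ge 0$) gives $\tb(L)=rs-\tfrac12|L\cdot\Gamma_{\partial V}|\le rs$. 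Since $L$ was arbitrary this yields $\overline{\tb}(\K_{(r,s)})\le rs$, and together with the lower bound the lemma follows.

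The hard part will be the convexification step underlying the upper bound: to apply Lemma~\ref{tbcomp} I must genuinely place a given Legendrian representative on the boundary of a convex solid torus in the knot type $\K$, yet the perturbation-to-convex result of Subsection~\ref{sec:cvx} only applies when the contact framing of $L$ relative to $\partial V$ is non-positive, i.e. precisely when $\tb(L)\le rs$, which is what I am trying to prove. The real content is therefore to exclude $\tb(L)>rs$ directly: if the contact framing of $L$ relative to the cabling torus were positive, then comparing $L$ with a nearby ruling curve of slope $\frac sr$ across a convex annulus would, via the Imbalance Principle, produce a bypass forcing an overtwisted disk and contradicting tightness of the ambient contact structure. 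Once this obstruction is ruled out, the remaining Farey-tessellation and framing computations are routine.
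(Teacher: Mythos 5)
The paper does not actually prove this lemma: it is quoted with a \qed as Claim~4.2 of \cite{EtnyreHonda05}, so the comparison must be with that argument and with the parallel computation in the sketch of Theorem~\ref{poscableclass}. Your overall skeleton (lower bound by exhibiting a Legendrian divide on a convex torus of dividing slope $\frac sr$, upper bound by controlling the twisting of an arbitrary representative relative to the cabling torus) is the right one, but both halves rest on mechanisms that do not work as described. For the lower bound, the Imbalance Principle applied to a convex annulus between $\partial S$ and a \emph{parallel push-off of itself} produces no bypasses at all: the two boundary components meet the dividing sets in the same number of points, so there is no imbalance. The correct tool is the classification of tight contact structures on solid tori and thickened tori \cite{Honda00a}: once $\frac rs<w(\K)$ gives a solid torus $S$ with convex boundary whose dividing slope lies strictly between $\frac sr$ and the meridian in the Farey sense, that classification directly furnishes a convex torus inside $S$ parallel to $\partial S$ with dividing slope exactly $\frac sr$, and a Legendrian divide on it has $\tb=rs$ by Lemma~\ref{tbcomp}(1). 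This is how the paper produces such tori everywhere else (e.g.\ ``there is a convex torus $T$ inside $S$ with two dividing curves of slope $\frac 1n$'' in the proof of Theorem~\ref{poscableclass}).

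For the upper bound you correctly isolate the real difficulty --- one cannot convexify $\partial V$ fixing $L$ unless $t(L,\partial V)\leq 0$, which is equivalent to the inequality being proven --- but your proposed resolution is circular in exactly the same way: building a convex annulus from $L$ to a nearby ruling curve again requires the twisting of $L$ relative to that annulus (which equals $t(L,\partial V)$) to be non-positive, and in any case a single bypass does not ``force an overtwisted disk.'' Note also that the Bennequin inequality cannot rescue you here: with $\chi=r-s(pq-p-q)-rs$ the bound reads $\tb\leq rs+s\,w(\K)-r$, which is \emph{strictly weaker} than $rs$ precisely when $\frac rs<w(\K)$ (this is why the sketch of Theorem~\ref{poscableclass}, which does use Bennequin, only covers the complementary regime $\frac rs>w(\K)$). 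The genuine argument, as in \cite{EtnyreHonda01b, EtnyreHonda05}, bounds $t(L,\partial V)$ by passing to the complement of a standard neighborhood of $L$, where $\partial V$ becomes a properly embedded annulus that can be made convex, and then invoking the classification of tight contact structures on the resulting thickened tori to show the twisting of any Legendrian curve in the isotopy class of an essential curve on such a torus is non-positive. Without some version of that step the inequality $\overline{\tb}(\K_{(r,s)})\leq rs$ is not established.
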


\section{Solid tori in $S^3$}
\label{Classification of solid tori}
In Subsection~\ref{ss:nont} we classify non-thickenable tori in the
knot types of the positive torus knots, and in
Subsection~\ref{subsec:partiallythick} we classify the partially
thickenable tori. Subsection~\ref{knotsontori} discusses Legendrian
knots sitting on these tori as ruling curves and Legendrian dividing
curves.

\subsection{Non-thickenable tori}\label{ss:nont}

When considering tori $N$ that realize the knot type of $(p,q)$-torus
knot $\K,$ there are two different ``natural" coordinates to use. The
first is the longitude-meridian coordinates where the longitude comes
from the intersection of a Seifert surface with $\partial N.$ This
longitude will be called the $\infty$-longitude, and these coordinates
will be called the $\mathcal{C}$ coordinates. The other coordinate
system has the longitude given by the framing coming from the Heegaard
torus that $\K$ sits on in $S^3.$ This longitude will be called the
$\infty'$-longitude and these coordinates will be called the
$\mathcal{C}'$ coordinates. Except where stated otherwise we will
always use the more standard $\mathcal{C}$ coordinates.

\begin{lemma}
\label{basecasethickening}
Suppose that the solid torus $N$ represents the knot type of a
positive $(p,q)$-torus knot $\K$.  If $N$ has convex boundary then $N$
will thicken unless it has dividing slope
\[
e_k=\frac k {pq-p-q}
\]
for some $k \in \left\{1,2,\ldots\right\}$, and $2n_k$ dividing curves
where $n_k=\gcd (pq-p-q, k).$
\end{lemma}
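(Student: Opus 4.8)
The plan is to study $N$ against the genus-one Heegaard torus $\Sigma$ carrying $\K$, and to produce, whenever the boundary data of $N$ is not of the exceptional form, a bypass that enlarges $N$ and changes its dividing slope. After making $\Sigma$ convex and putting $\partial N$ in standard form, cutting $S^3$ along $\Sigma$ exhibits the complement $S^3\setminus\operatorname{int} N$ as a union of two solid tori $W_1=V_1\setminus\operatorname{int} N$ and $W_2=V_2\setminus\operatorname{int} N$, glued along the annulus $\Sigma\setminus N$, where $V_1,V_2$ are the two solid tori cobounded by $\Sigma$. The cores of $V_1$ and $V_2$ are unknots, so Honda's classification of tight contact structures on solid tori pins down exactly which convex tori can sit inside each $W_i$. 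Since enlarging $N$ is the same as shrinking one of the $W_i$, I would translate the entire question into: for which dividing slopes and dividing numbers is it impossible to find a convex torus inside either $W_i$ carrying a new dividing slope?

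The engine for finding such tori is the Imbalance Principle together with Theorem~\ref{bpaferry}. Working in the $\mathcal{C}'$ (Heegaard-framing) coordinates, in which the meridians of $V_1$ and $V_2$ have slopes determined by $p$ and $q$, I would take a convex annulus $A$ running from a ruling curve of $\partial N$ across $W_i$ to a convex torus near the core of $V_i$. Whenever the dividing curves of $\partial N$ meet $\partial A$ more often than is forced near the core, the Imbalance Principle yields a bypass on $A$; attaching it either drops the number of dividing curves or moves the dividing slope, and by Theorem~\ref{bpaferry} the slope travels along an edge of the Farey tessellation in the direction of the $V_i$-meridian, i.e.\ toward the extremal value $e_1$. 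Iterating, $N$ thickens unless this process terminates \emph{simultaneously} on both sides of $\Sigma$, which, after converting back to the standard $\mathcal{C}$ coordinates and applying the continued-fraction bookkeeping of Lemmas~\ref{lem:eaec} and~\ref{lem:basicrange}, happens only when the dividing slope is one of the extremal values $e_k=k/(pq-p-q)$. Thus any $\partial N$ whose slope is not some $e_k$ admits a slope-changing bypass, and $N$ thickens.

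It then remains to treat slope exactly $e_k$ and to show that thickening is still possible unless the number of dividing curves is precisely $2n_k$ with $n_k=\gcd(pq-p-q,k)$. Here I would compare the dividing curves that $\partial N$ induces on $\partial W_1$ and $\partial W_2$ after edge-rounding with the dividing set $\Sigma$ is forced to carry in the tight structure on $S^3$: a slope-$e_k$ curve is the primitive class $(k/n_k)\lambda+((pq-p-q)/n_k)\mu$, and only when exactly $2n_k$ dividing curves are present does the total homology of the dividing set close up compatibly on both sides with no leftover imbalance. Any other count produces the imbalance needed for a slope-changing bypass, so $N$ again thickens. The main obstacle is exactly this last bookkeeping step: one must control how the number of dividing curves on $\partial N$ interacts with the rigid dividing set on the convex Heegaard torus, and verify that every non-exceptional configuration genuinely admits a bypass that \emph{changes the slope} rather than merely reducing the dividing number. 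Carrying this out requires a careful case analysis of bypass attachments on both $V_1$ and $V_2$, together with the classification of tight contact structures on $T^2\times I$ from \cite{Honda00a}.
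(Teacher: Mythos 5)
Your coarse strategy---decompose the exterior of $N$ along the Heegaard torus and use the Imbalance Principle to produce slope-changing bypasses---is the same one the paper uses, but two of your steps are genuinely broken, and the second one is the entire content of the lemma. The structural problem first: ``enlarging $N$ is the same as shrinking one of the $W_i$'' is false. If $N'$ is a thickening of $N$, then $N'\setminus\operatorname{int}N\cong T^2\times[0,1]$, so $\partial N'$ is isotopic to $\partial N$ in the knot exterior and in particular incompressible there; but every torus embedded in the solid torus $W_i$ is compressible inside $W_i$, so $\partial N'$ cannot lie in a single $W_i$. A thickening necessarily spreads across $V_1$, $V_2$ and the neighborhood of the annulus $\Sigma\setminus N$ simultaneously, so the problem cannot be localized to one side of $\Sigma$, and Honda's classification of tight structures on the $W_i$ does not ``pin down'' the possible thickenings. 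Moreover, when your annuli are imbalanced the other way, the bypass lands on the torus near the core of $V_i$, not on $\partial N$; such a bypass thickens the core's neighborhood (increasing the Thurston--Bennequin invariant of a Legendrian core) rather than $N$. This is not fatal---the paper exploits exactly this---but it forces you to introduce Legendrian representatives $L_1,L_2$ of the two cores with maximal $\tb(L_i)=-m_i$ in $S^3\setminus N$, which your write-up never does, and those $m_i$ are what the final answer is made of.

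The decisive gap is that you never derive the exceptional data, and the tools you cite cannot do it. Lemmas~\ref{lem:eaec} and~\ref{lem:basicrange} are Farey-tessellation facts; they know nothing about $p$ and $q$ and cannot output $e_k=k/(pq-p-q)$. Theorem~\ref{bpaferry} does not say that bypass attachment drives the dividing slope ``toward $e_1$'': the jump is governed by the ruling slope used and by which side the bypass is attached from, and there is no such monotone drift. What is actually needed, and what the paper does, is this: with $\tb(L_i)=-m_i$ maximal, apply the Imbalance Principle to the annulus in $S^3\setminus N$ between $\partial N(L_1)$ and $\partial N(L_2)$ (whose boundary components are $(p,q)$- and $(q,p)$-ruling curves, both isotopic to the core of $N$) to force the balance equation $qm_1+p=pm_2+q$; observe that its positive solutions are exactly $m_1=pj+1$, $m_2=qj+1$ for $j\geq 0$; and then compute by edge-rounding, as in~\eqref{slopecalculation}, that $\widetilde N=S^3\setminus\bigl(N(L_1)\cup N(L_2)\cup N(A)\bigr)$ is a solid torus containing $N$ whose boundary has dividing slope $-(j+1)/(pqj+p+q)$ in the Heegaard framing, i.e.\ $e_{j+1}$ in the Seifert-surface framing, with $2\gcd(pq-p-q,j+1)$ dividing curves. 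Since $N\subset\widetilde N$, the lemma follows: $N$ thickens unless its boundary data already agrees with that of $\widetilde N$. Your appeal to ``the total homology of the dividing set closing up compatibly'' is not a substitute for this computation---the count $2n_k$ is an output of the edge-rounding, not of any homological matching condition.
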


\begin{proof} 
We begin by ignoring the contact structure and building a topological
model for the complement of $N.$ See Figure~\ref{fig:setup}. The knot
$\K$ can be thought to sit on a torus $T$ that separates $S^3$ into
two solid tori $V_1$ and $V_2$, each of which can be thought of as a
neighborhood of an unknot $F_1$ and $F_2.$ As $N$ is a neighborhood of
$\K$, we can isotope $T$ so that it intersects $N$ in an annulus and
thus $A'=T\setminus (T\cap N)$ is an annulus in the complement of $N$
with boundary on $\partial N.$ Moreover, there is a small neighborhood
of $A',$ which we denote ${N}(A')$ such that $S^3\setminus
(N\cup{N}(A'))$ consists of two solid tori, which we may think of as
$V_1$ and $V_2.$ Turning this construction around $V_1\cup
V_2\cup{N}(A')$ is the complement of $N.$ We can identify $N(A')$ as a
neighborhood of an annulus $A$ that has one boundary component a
$(p,q)$ curve on $\partial V_1$ and the other boundary component a
$(q,p)$ curve on $\partial V_2.$ Thus, topologically, the complement
of $N$ can be built as the neighborhood of two unknots (that form a
Hopf link) union the neighborhood of an annulus $A.$
\begin{figure}[ht]
        \relabelbox \small  {\centerline{\epsfbox{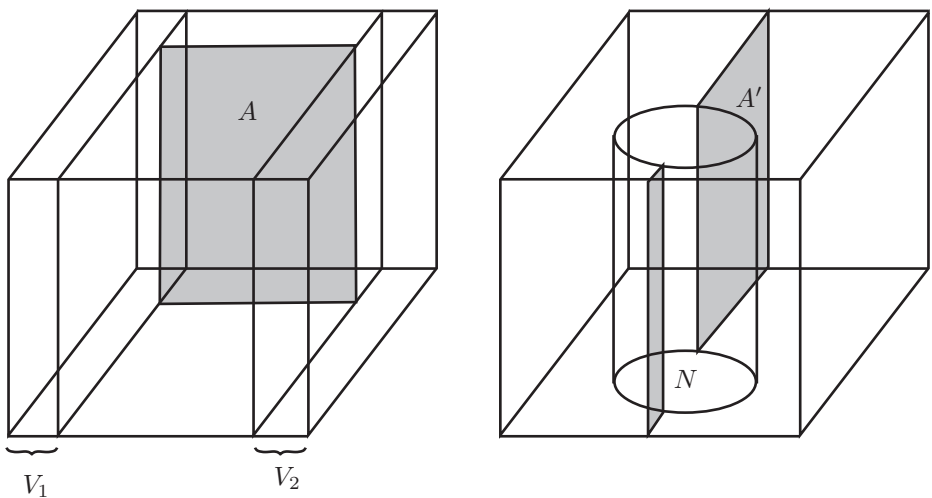}}}
  \relabel {1}{$V_1$}
  \relabel {2}{$V_2$}
  \relabel {a}{$A$}
  \relabel{p}{$A'$}
  \relabel{n}{$N$}
  \endrelabelbox
        \caption{The cubes in the picture represent $T^2\times[0,1]$
(the top and bottom are identified and the front and back are also
identified), thought of as the complement of the Hopf link $F_1\cup
F_2$. We have chosen coordinates on the torus so that the $(p,q)$
curve is vertical. On the left hand side we see the solid tori $V_1$
and $V_2$ (minus their cores) and the annulus $A$. On the
right hand side we see the solid torus $N$ and the annulus $A'$.}
        \label{fig:setup}
\end{figure}

Bringing the contact structure back into the picture we can assume
that $L_i$, $i=1,2$, is a Legendrian representative of $F_i$ in the
complement of $N,$ which maximize $tb(L_i)$ in the complement of $N$,
subject to the condition that $L_1 \sqcup L_2$ is isotopic to $F_1
\sqcup F_2$ in the complement of $N$. Let $tb(L_i)=-m_i$, where $m_i >
0.$ If $N(L_i)$ is a regular neighborhood of $L_i$, then
$\textrm{slope}(\Gamma_{\partial N(L_i)})=-1/m_i$ with respect to
$\mathcal{C}_{F_i}$.

Notice that $S^3\setminus (N(L_1)\cup N(L_2))$ is diffeomorphic to
$S=T^2\times [0,1]$ and contains $N.$ We wish to change coordinates on
$T^2$ so that $N$ is a vertical solid torus in $S.$ Specifically,
$T^2$ inherits coordinates as the boundary of $N(L_1),$ that is using
the coordinate system coming from the framing $\mathcal{C}_{F_1}.$ We
change coordinates so that the $(p,q)$ curve on $T^2$ becomes the
$(0,1)$ curve (which can be thought of as the longitude in the
$\mathcal{C}'$ framing). This can be done by sending the oriented
basis $((p,q),(p',q'))$ for $T^2,$ where $pq'-qp'=1$, to the basis
$((0,1),(-1,0)).$ This corresponds to the map
$\displaystyle \phi_1 = \left(\begin{array}{cc}
					q &-p\\
					q' &-p'\end{array}\right)$.
Then $\phi_1$ maps $(-m_1,1) \mapsto (-qm_1-p, -q'm_1-p')$.  Since we
are only interested in slopes, we write this as $(qm_1+p, q'm_1+p')$.

Similarly, we change from $\mathcal{C}_{F_2}$ to $\mathcal{C}'$.  The
only thing we need to know here is that $(-m_2,1)$ maps to
$(pm_2+q,p'm_2+q')$. Thus $S$ is a thickened torus $T^2\times[0,1]$
with dividing slope $\frac {q'm_1+p'}{qm_1+p}$ on $T\times\{0\}$ and
$\frac {p'm_2+q'}{pm_2+q}$ on $T\times \{1\}.$

Now suppose $qm_1+p \neq pm_2 + q$.  This would mean that the twisting
of Legendrian ruling representatives of $K$ on $\partial N(L_1)$ and
$\partial N(L_2)$ would be unequal.  Then we could apply the Imbalance
Principle to a convex annulus ${A}$ in $S^3 \backslash N$ between
$\partial N(L_1)$ and $\partial N(L_2)$ to find a bypass along one of
the $\partial N(L_i)$.  This bypass in turn gives rise to a thickening
of $N(L_i)$, allowing, by the twist number lemma \cite{Honda00a}, the
increase of $tb(L_i)$ by one.  Hence, eventually we arrive at $qm_1+p
= pm_2 + q$ and a standard convex annulus ${A}$; that is, the dividing
curves on $A$ run from one boundary component of $A$ to the other.

Since $m_i > 0$, the smallest solution to $qm_1+p = pm_2 + q$ is
$m_1=m_2=1$.  All the other positive integer solutions are therefore
obtained by taking $m_1=pj+1$ and $m_2=qj+1$ with $j$ a non-negative
integer.  We can then compute the boundary slope of the dividing
curves on $\partial(\widetilde{N})$ where $\widetilde{N}=N(L_1) \cup
N(L_2) \cup{N}(A).$ This will be the boundary slope for the solid
torus $\widetilde{N}$ containing $N$.  We have
\begin{equation}
\label{slopecalculation}
-\frac{q'(pj+1)+p'}{pqj+p+q} + \frac{p'(qj+1)+q'}{pqj+p+q}-\frac{1}{pqj+p+q} = -\frac{j+1}{pqj+p+q}
\end{equation}

After changing from $\mathcal{C}_\K'$ to $\mathcal{C}_\K$ coordinates,
and setting $k=j+1$, these slopes become $k/(pq-p-q)$ as desired.  We
also notice that $\partial \widetilde{N}$ has $2\gcd(pq-p-q, k)$
dividing curves. Thus any solid torus $N$ will thicken unless it
satisfies the conditions stated in the lemma.
\end{proof}

We have not yet proved that tori as described in the above lemma
actually exist. To rectify this problem we explicitly construct such
tori in the tight contact structure on $S^3$ by gluing together tight
contact structures on the pieces used in the proof of
Lemma~\ref{basecasethickening}. More specifically we have the
following.

\begin{construction}\label{constructNk}
let $N$ be a solid torus in the knot type of $\K$ and set
$M=S^3\setminus N.$ As noted in the proof above we can think of $M$ as
the union of two solid tori $V_1\cup V_2$ (which we think of as a
neighborhood of a Hopf link $F_1\cup F_2$), together with a product
neighborhood $N(A)$ of an annulus $A$ that has one boundary component
a $(p,q)$-curve on $\partial V_1$ and the other boundary component a
$(q,p)$-curve on $\partial V_2$. Also recall that $N(A)$ can be
thought of as a neighborhood of an annulus $A'$ that has boundary on
$N$ and that the union of $N$ and $N(A)$ is a thickened torus
$T^2\times[0,1]$ whose complement is $V_1\cup V_2.$

Now let $N_k^\pm$ denote $N$ with one of the two universally tight
contact structures on $N$ with convex boundary having 
boundary slope $s_k={\frac{k}{pq-p-q}}$
with respect to $\mathcal{C}$, and with $2n_k$ dividing curves. From the
classification of tight contact structures on solid tori this is
equivalent to the condition that the convex meridional disks all have
bypasses all of the same sign and thus the two contact structures on
$N_k^\pm$ differ by $-Id$.  (Note that when $k=1$ there is only one
contact structure. To avoid unnecessary notation we will frequently
write $N_1^\pm$ realizing that $N_1^+$ is the same as $N_1^-$.)

Let $N(A)=N(A')=A'\times [0,1]$ denote a product neighborhood of $A'$
and put a $[0,1]$-invariant contact structure on it, where the
dividing curves on $A' =A'\times\{\frac 12\}$ are in standard form.

The set $R=N_k\cup N(A')$ is diffeomorphic to $T^2\times[0,1]$ and we
can think of it as fibering over the annulus with fiber circles
representing the knot type $\K.$ For either choice of contact
structure on $N_k$, the contact structure on $R$ can be isotoped to be
transverse to the fibers of $R$, while preserving the dividing set on
$\partial R$.  It is well known, see for example \cite{Honda00b}, that
such a horizontal contact structure is universally tight.  Moreover,
we see the boundary conditions on $R$ are
$\#\Gamma_{T_1}=\#\Gamma_{T_2}=2$ and (with appropriately chosen
dividing curves on $A'$) $\mbox{slope}(\Gamma_{T_1})=-{\frac1{m_1}}$,
$\mbox{slope}(\Gamma_{T_2})=-m_2$ when using the coordinates on $T^2$
coming from the framing $\mathcal{C}_{F_1}$.

We know that there are exactly two universally tight contact
structures on $T^2\times[0,1]$ with these dividing curves, differing
by $-Id$, and their horizontal annuli contain bypasses all of the same
sign; one can easily see they correspond to the two choices of
universally tight contact structures on $N_k.$ We know that each of
these universally tight contact structures on $R$ embeds in the
standard tight contact structure as the region between a Legendrian
realization of the Hopf link $F_1\cup F_2.$ Thus the standard tight
contact structure on $S^3$ minus $R$ give standard neighborhoods of a
Legendrian realization $L_1$ of $F_1$, and $L_2$ of $F_2.$ Moreover,
we know that if $F_1$ and $F_2$ are oriented so that their linking is
$+1$ then for one choice of universally tight contact structure on $R$
we have that $L_1$ and $L_2$ are both obtained from maximal
Thurston-Bennequin unknots by only positive stabilizations and for the
other choice of universally tight contact structure on $R$ we have
only negative stabilizations. \hfill \qed
\end{construction}

We first notice that these $N_k^\pm$ just constructed in $S^3$ are
non-thickenable solid tori.

\begin{lemma}\label{lem:nonthicken}
The tori $N_k^\pm$ from Construction~\ref{constructNk} are
non-thickenable.
\end{lemma}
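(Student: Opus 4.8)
The plan is to show that $N_k^\pm$ cannot be thickened by reversing the topological analysis carried out in Lemma~\ref{basecasethickening}. Recall that in the proof of that lemma we realized the complement $S^3 \setminus N$ as the union $V_1 \cup V_2 \cup N(A)$ of neighborhoods of a Hopf link $F_1 \cup F_2$ together with a product neighborhood of the annulus $A$. Construction~\ref{constructNk} built $N_k^\pm$ precisely so that its complement carries a contact structure in which $V_1$ and $V_2$ are \emph{standard neighborhoods} of Legendrian representatives $L_1, L_2$ of $F_1, F_2$, and these Legendrian unknots are obtained from maximal Thurston-Bennequin unknots by stabilizations all of one sign (positive for one of $N_k^\pm$, negative for the other). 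The key point is that the slopes $e_k = k/(pq-p-q)$ arose as the \emph{smallest} boundary slopes achievable subject to the imbalance condition $qm_1 + p = pm_2 + q$; so non-thickenability should follow from the fact that the $L_i$ already sit at their maximal Thurston-Bennequin invariant in the complement of $N$.

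First I would argue by contradiction: suppose $N_k^\pm$ thickens to a solid torus $S'$ with convex boundary of dividing slope different from $e_k$. Since thickening a solid torus corresponds to shrinking its complement, passing to $S'$ corresponds to a thickening of one of the neighborhoods $N(L_i)$, equivalently to increasing $\tb(L_i)$ for one of the two Legendrian unknots, or to changing the annulus $A$ so as to alter the imbalance. I would invoke Isotopy Discretization (Subsection~\ref{sssec:discritize}) to reduce a general thickening to a sequence of bypass attachments, and then analyze where such a bypass could live. The essential observation is that a thickening past slope $e_k$ forces, via the slope computation~(\ref{slopecalculation}), a decrease in $m_1$ or $m_2$, i.e. an \emph{increase} in $\tb(L_i)$ for one of the unknots in the complement of $N$.

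The crux is therefore to show that $L_1$ and $L_2$, as constructed, genuinely maximize their Thurston-Bennequin invariants \emph{in the complement of the vertical torus $N$}, so that no such bypass exists. Here I would use the classification of tight contact structures on $T^2 \times [0,1]$ and on solid tori from \cite{Honda00a}: the region $R = N_k \cup N(A')$ is a universally tight $T^2 \times [0,1]$ whose two boundary dividing slopes are determined, and the contact structures on $V_1, V_2$ are forced to be standard neighborhoods of stabilized unknots whose $\tb$ is pinned down by $m_1 = pj+1$, $m_2 = qj+1$ with $k = j+1$. Since the unknot is Legendrian simple and the signs of all stabilizations agree, any attempted bypass that would raise $\tb(L_i)$ would have to reduce $|\Gamma|$ or change the slope in a way incompatible with the fixed homotopy class of the horizontal contact structure on $R$; the rotation-number function $f_T$ of Subsection~\ref{sec:rotationfunction} obstructs this because the two solid tori $N_k^+$ and $N_k^-$ are distinguished by the signs of their basic slices, which a thickening would have to violate.

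The main obstacle I anticipate is making the last step fully rigorous: one must rule out \emph{all} bypasses, including those attached from inside $N_k^\pm$ itself (which could in principle change its dividing slope without touching the $L_i$), not merely those that increase $\tb(L_i)$. To handle the interior bypasses I would use that $N_k^\pm$ was built with a \emph{universally tight}, hence bypass-rigid, contact structure whose convex meridional disks all have bypasses of the same sign; attaching a bypass of the opposite sign is impossible by tightness, and a bypass of the same sign only stabilizes rather than thickens. Thus no bypass attachment can increase the contact width, and $N_k^\pm$ is non-thickenable. I expect this interior-versus-exterior bypass dichotomy, together with carefully tracking the universal tightness across the gluing of $R$ to $V_1 \cup V_2$, to be where the real work lies.
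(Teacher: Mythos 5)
There is a genuine gap, and it sits exactly where you locate ``the real work'': your pivotal claim that $L_1$ and $L_2$ maximize their Thurston-Bennequin invariants among \emph{all} Legendrian representatives of $F_1\sqcup F_2$ in the complement of $N_k^\pm$. By the analysis in Lemma~\ref{basecasethickening}, that maximality is essentially equivalent to the non-thickenability you are trying to prove: if some Legendrian representative of $F_i$ in $S^3\setminus N_k^\pm$ had larger $\tb$, redoing the construction with it (via the Imbalance Principle) would produce a thickening past slope $e_k$, and conversely a thickening to some $N_{k'}$, $k'<k$, yields representatives with $\tb(L_1)=-(p(k'-1)+1)>-(p(k-1)+1)$. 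So your reduction merely restates Lemma~\ref{lem:nonthicken} in different words, and the justification you then offer --- universal tightness of $R=N_k\cup N(A')$, sign coherence of the basic slices, the function $f_T$ --- is all \emph{local} data about the contact structure near the particular complex $N(L_1)\cup N(L_2)\cup N(A)$. But $\tb$-maximality is a global statement about every Legendrian unknot isotopic to $F_i$ anywhere in the knot complement $M_k$, including ones that wander far from this complex; tightness of $(S^3,\xi_{std})$ alone only gives $\tb\leq -1$ for unknots, nowhere near the required bound $\tb(L_1)\leq -(p(k-1)+1)$. This is precisely the content the paper's proof supplies and yours lacks: it identifies $M_k$ as a Seifert fibered space, uses that the $(p,q)$-torus knot is fibered with periodic monodromy of period $pq$ to pass to the cover $\widetilde{M}_k\cong S^1\times\Sigma$, isotopes the pulled-back structure so all $S^1$ fibers are Legendrian with twisting $-(pq(k-1)+p+q)$, and then traps any hypothetical Legendrian fiber $\gamma$ of larger twisting inside a solid torus $S^1\times(\bigcup P)$, built by tiling copies of a polygon cut from $\Sigma$, which is a standard neighborhood of a Legendrian curve with that same twisting; the Bennequin bound then excludes $\gamma$. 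Some such global argument is indispensable, and no amount of convex-surface bookkeeping near $\partial N(L_i)$ and $N(A)$ can substitute for it.

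A secondary error: your ``interior versus exterior'' bypass dichotomy is misdirected. A bypass attached to $\partial N_k^\pm$ from the inside produces a \emph{thinning}, not a thickening (compare Remark~\ref{rem:pt}), so interior bypasses are not the threat; the bypasses you must exclude are exactly those attached from the outside, i.e.\ from $M_k$, which returns you to the unresolved global question above. Moreover, ``attaching a bypass of the opposite sign is impossible by tightness'' is not a valid principle: bypasses of both signs routinely exist in tight manifolds (both stabilizations $S_\pm$ of a Legendrian knot always exist), and what universal tightness of $N_k^\pm$ controls is the sign of the bypasses seen by its meridional disks, not the existence of bypasses for its boundary attached from the complement. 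Where the paper does rule out certain interior bypasses (Lemma~\ref{lem1}), it does so by a slope and twisting argument, not by the sign-rigidity you invoke.
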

\begin{proof}
By Lemma~\ref{basecasethickening}, it suffices to show that $N_k$ does
not thicken to any $N_{k'}$ for $k' < k$. (We drop the $\pm$ from the
notation for $N_k$ for the remainder of this proof and just assume one
choice of sign is fixed throughout.) To this end, observe that the
$(p,q)$-torus knot is a fibered knot over $S^1$ with fiber a Seifert
surface $\Sigma$ of genus $g = (p-1)(q-1)/2$ (see \cite{Milnor68}).
Moreover, the monodromy map $\phi$ of the fibration is periodic with
period $pq$.  Thus, $M_k$ has a $pq$-fold cover $\widetilde{M}_k \cong
S^1 \times \Sigma$.  If one thinks of $M_k$ as $\Sigma \times [0,1]$
modulo the relation $(x,0) \sim (\phi(x),1)$, then one can view
$\widetilde{M}_k$ as $pq$ copies of $\Sigma \times [0,1]$ cyclically
identified via the same monodromy.  Now note that in $M_k$, the
$\infty'$-longitude intersects any given Seifert surface $pq$ times
efficiently.  It is therefore evident that we can view $M_k$ as a
Seifert fibered space with two singular fibers (the components of the
Hopf link).  The regular fibers are topological copies of the
$\infty'$-longitude, which itself is a Legendrian ruling curve on
$\partial M_k=\partial N_k$ with twisting $-(pq(k-1)+p+q)$.

We claim the pullback of the tight contact structure to
$\widetilde{M}_k$ admits an isotopy where the $S^1$ fibers are all
Legendrian and have twisting number $-(pq(k-1)+p+q)$ with respect to
the product framing.  To see this we consider the contact structure on
$V_i,$ the neighborhood of the Legendrian unknot $L_i$ (we will use
notation form Construction~\ref{constructNk}). In the $pq$-cover of
$M_k$ the torus $V_1$ will lift to $p$ copies of the $q$-fold cover
$\widetilde{V}_1$ of $V_1$ and similarly $V_2$ will lift to $q$ copies
of the $p$-fold cover $\widetilde{V}_2$ of $V_2.$ We can assume that
$\partial V_1$ has ruling slope $\frac qp$ (that is the ruling curves
are Legendrian isotopic to a Legendrian $\infty'$-curve on $\partial
M_k$) and similarly for $\partial V_2.$ The ruling curves lift to
curves of slope $\frac 1p$ in $\widetilde{V}_1.$ In particular they
are longitudes and have twisting $-(pq(k-1)+p+q).$ Moreover the
dividing curves on $\widetilde{V}_1$ are also longitudinal (a
different longitude of course). Thus we see that the contact structure
on $\widetilde{V}_1$ is just a standard neighborhood of one of the
ruling curves (pushed into the interior of the solid torus). Similarly
for $\widetilde{V}_2.$ Thus each of these tori is foliated by
Legendrian curves isotopic to the ruling curves. As $\widetilde{M}_k$
is made from copies of the $\widetilde{V}_i$ and copies of covers of
the convex neighborhoods of the annuli $A$ we see the claimed isotopy
of $\widetilde{M}_k$ so that the $S^1$ fibers are all Legendrian.

If $N_k$ can be thickened to $N_{k'}$, then there exists a Legendrian
curve topologically isotopic to the regular fiber of the Seifert
fibered space $M_k$ with twisting number greater than
$-(pq(k-1)+p+q)$, measured with respect to the Seifert fibration.
Pulling back to the $pq$-fold cover $\widetilde{M}_k$, we have a
Legendrian knot which is topologically isotopic to a fiber but has
twisting greater than $-(pq(k-1)+p+q)$.  Call this Legendrian knot
with greater twisting $\gamma$.  We will obtain a contradiction, thus
proving that $N_k$ cannot be thickened to $N_{k'}$.

Since $\Sigma$ is a punctured surface of genus $g$, we can cut
$\Sigma$ along $2g$ disjoint arcs $\alpha_i$, all with endpoints on
$\partial \Sigma$, that yield a polygon $P$.  Thus we have a solid
torus $S^1 \times P$ embedded in $\widetilde{M}_k$.  We first
calculate $\textrm{slope}(\Gamma_{\partial (S^1 \times P)})$ as
measured in the product framing.  To do so, note that a longitude for
this torus intersects $\Gamma,$ $2(pq(k-1)+p+q)$ times, and a meridian
for this torus is composed of $2$ copies each of the $2g$ arcs
$\alpha_i$, as well as $4g$ arcs $\beta_i$ from $\partial \Sigma$.
Now since $\partial \Sigma$ is a preferred longitude downstairs in
$M_k$, we know that $\Gamma$ intersects these $\beta_i$,
$2(pq-p-q)=2(2g-1)$ times positively.  But then the edge-rounding that
results at each intersection of an $S^1 \times \beta_i$ with an $S^1
\times \alpha_i$ yields $4g$ negative intersections with $\Gamma$.
Thus we obtain after edge-rounding that
$\textrm{slope}(\Gamma_{\partial (S^1 \times P)}) = -1/(pq(k-1)+p+q)$.

Now as in Lemma 3.2 in \cite{Honda00b}, we take $\widetilde{M}_k = S^1
\times \Sigma$ and pass to a (new) finite cover of the base by tiling
enough copies of $P$ together so that $\gamma$ is contained in a solid
torus $S^1 \times (\bigcup P)$.  We notice that $S^1 \times (\bigcup
P)$ is foliated by Legendrian knots with twisting $-(pq(k-1)+p+q)$
that are isotopic to the $S^1$ fibers in the product structure and
that the dividing curves on the boundary of the solid torus are
longitudinal. Thus $S^1 \times (\bigcup P)$ is a standard neighborhood
of a Legendrian curve with twisting $-(pq(k-1)+p+q)$ with respect to
the product structure. We know that inside any such solid torus any
Legendrian isotopic to the core of the torus has twisting less than or
equal to $-(pq(k-1)+p+q)$ (or else one could violate the Bennequin
bound). Thus $\gamma$ cannot exist.
\end{proof}

We now observe that the $N_k^\pm$ are the only candidates for
non-thickenable tori in the knot type of a positive $(p,q)$-torus
knot. In addition, we compute what the rotation numbers of Legendrian
curves on $\partial N_k^\pm$ are.

\begin{lemma}\label{candidates}\label{mustbeNk}
Let $N$ be a solid torus with convex boundary representing the
positive $(p,q)$-torus knot.  If $N$ does not thicken then $N$ must be
isotopic to one of the $N_k^\pm$ from Construction~\ref{constructNk}.

Moreover, if $\partial N_k^\pm$ is isotoped so that the ruling curves
are meridional then the meridional curves will have rotation number
$\pm (k-1)$, and if $\partial N_k^\pm$ is isotoped so that the ruling
curves are $\infty$-longitudes then the $\infty$-longitudes have
rotation number 0.
\end{lemma}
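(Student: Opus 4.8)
The first statement I would deduce largely from work already done. Lemma~\ref{basecasethickening} shows that a non-thickenable $N$ must have dividing slope $e_k=\frac{k}{pq-p-q}$ with $2n_k$ dividing curves, so only the contact structure carried by $N$ itself remains to be pinned down. By the classification of tight contact structures on solid tori \cite{Honda00a}, such an $N$ is either one of the (at most) two universally tight structures --- which are exactly the $N_k^\pm$ of Construction~\ref{constructNk}, characterized there by having all bypasses on a convex meridional disk of the same sign --- or it is virtually overtwisted. Since Lemma~\ref{lem:nonthicken} already shows the $N_k^\pm$ do not thicken, the entire content of the first assertion is the claim that every virtually overtwisted solid torus with this boundary data \emph{does} thicken; granting this, $N$ must be isotopic to one of the $N_k^\pm$.

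Establishing that claim is the main obstacle. I would take a convex meridional disk $D$ of $N$; virtual overtwistedness forces $\Gamma_D$ to contain boundary-parallel arcs cutting off bypasses of both signs. Rebuilding the complement $M=S^3\setminus N=V_1\cup V_2\cup N(A)$ as in the proof of Lemma~\ref{basecasethickening}, a mixed-sign configuration should let one cancel an opposite-sign pair of bypasses (lowering $|\Gamma_{\partial N}|$) or, via the Imbalance Principle applied to the annulus $A$ after such a simplification, produce a bypass attached to $\partial N$ from the complement side that changes its slope; by Theorem~\ref{bpaferry} this thickens $N$ to a torus of smaller exceptional slope, contradicting non-thickenability. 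The delicate part is to carry out this bookkeeping uniformly in $k$ and $n_k$, tracking signs through the Farey algorithm of Theorem~\ref{bpaferry} and, where needed, the $pq$-fold cover of Lemma~\ref{lem:nonthicken}, so that a genuine thickening rather than a mere reshuffling of basic slices is produced.

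For the rotation-number statement I would use the rotation function $f_T$ of Subsection~\ref{sec:rotationfunction}: it is linear on $H_1(\partial N_k^\pm)$ and equals $\rot$ on ruling curves and Legendrian divides, so it suffices to evaluate it on two independent classes. Let $\mu$ be the meridian (slope $0$) and $\lambda$ the $\infty$-longitude (slope $\infty$). A convex meridional disk has Legendrian boundary an unknot with $\tb=-k$, and the universal tightness established above forces this disk to realize the extremal rotation number, giving $f_T(\mu)=\rot(\partial D)=\pm(k-1)$, with sign $+$ for $N_k^+$ and $-$ for $N_k^-$. For the $\infty'$-longitude $\lambda'=\lambda+pq\,\mu$ (the regular Seifert fiber), I would realize it as the $(p,q)$-curve on $\partial V_1$ and apply Lemma~\ref{rcomp} inside $V_1$: with $\rot(\partial D_1)=0$ for the standard neighborhood and $\rot(L_1)=\pm p(k-1)$ from Construction~\ref{constructNk}, this gives $f_T(\lambda')=\pm pq(k-1)$. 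Linearity then yields
\[
f_T(\lambda)=f_T(\lambda')-pq\,f_T(\mu)=0,
\]
matching the $k=1$ case where $\partial N_1$ is a standard neighborhood of the maximal-$\tb$ torus knot (rotation $0$). Finally, linearity of $f_T$ recovers the rotation number of an arbitrary ruling curve, in particular the two values claimed.
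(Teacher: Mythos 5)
Your last paragraph (the rotation numbers) is essentially the paper's own computation: the function $f_T$, the value $f_T(\mu)=\pm(k-1)$ read off a convex meridional disk with $\tb(\partial D)=-k$ and bypasses of one sign, the value $f_T(\lambda')=\pm pq(k-1)$ obtained from Lemma~\ref{rcomp} applied to the $(p,q)$-ruling curve on $\partial V_1$, and linearity giving $f_T(\lambda)=0$. The first statement, however, has a genuine gap. Knowing that $\partial N$ has dividing slope $e_k$ with $2n_k$ dividing curves and even pinning down the tight contact structure on $N$ itself does \emph{not} determine the isotopy class of $N$ in $(S^3,\xi_{std})$: by the discussion in Subsection~\ref{sssec:isotopy} one needs a contactomorphism of $S^3$ carrying $N$ to $N_k^\pm$, and for that the contact structure on the complement must also be identified. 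For fixed $k$ the complement decomposes as $V_1\cup V_2\cup N(A)$ with $\tb(L_1)=-(p(k-1)+1)$ and $\tb(L_2)=-(q(k-1)+1)$, but a priori the $L_i$ could be any of the many Legendrian unknots with these Thurston--Bennequin invariants, distinguished by rotation number. Most of the paper's proof is devoted to exactly this point: the two boundary ruling curves of $A$ are Legendrian isotopic through $A$, hence have equal rotation numbers, which (together with Construction~\ref{constructNk}) forces $\rot(L_1)=\pm p(k-1)$ and $\rot(L_2)=\pm q(k-1)$ with correlated signs, so that $S^3\setminus N$ is contactomorphic to $S^3\setminus N_k^\pm$. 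Your proposal never engages with the complement's contact structure, so even granting your claim about $N$ itself you cannot conclude the isotopy.

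The second issue is the claim you correctly flag as the main obstacle --- that every virtually overtwisted $N$ with this boundary data thickens --- but your sketch does not close it. Cancelling an opposite-sign pair of bypasses on a meridional disk alters the contact structure on $N$, not $\Gamma_{\partial N}$, and the Imbalance Principle applied to the annulus $A$ only compares twisting along $\partial V_1$ and $\partial V_2$; it is blind to the contact structure in the interior of $N$ and so cannot detect virtual overtwistedness there. The bridge the paper supplies is the thickened torus $N\cup N(A')$: this must carry a universally tight contact structure, since otherwise one finds a bypass for one of the $L_i$, increasing $\tb(L_i)$ and hence thickening $N$ by the computation in Lemma~\ref{basecasethickening}; and a relative Euler class computation shows that universal tightness of $N\cup N(A')$ forces all bypasses on the convex meridional disks of $N$ to have the same sign. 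Without some such mechanism connecting the interior of $N$ to a bypass in its complement, ``tracking signs through the Farey algorithm'' will not produce the required thickening.
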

\begin{proof}
Let $N$ be a solid torus with convex boundary as in the lemma. If $N$ does not thicken then 
from the discussion in the proof of Lemma~\ref{basecasethickening} we
see that $S^3\setminus N$ can be thought of as the union of two solid
tori $V_1\cup V_2$ (which we think of as a standard neighborhood of a
Legendrian realization $L_1\cup L_2$ of the Hopf link $F_1\cup F_2$)
together with a product neighborhood $N(A)$ of an annulus $A$ that has
one boundary component a $(p,q)$-curve $K_1$ on $\partial V_1$ and the
other boundary component a $(q,p)$-curve $K_2$ on $\partial V_2$. From
the proof of Lemma~\ref{basecasethickening} we see that
$\tb(L_1)=-(p(k-1)+1)$ and $\tb(L_2)=-(q(k-1)+1)$ for some positive
integer $k.$ We can assume that $\partial A$ are ruling curves on the
tori $\partial V_1$ and $\partial V_2.$ Ruling curves on $A$ provide a
Legendrian isotopy form $K_1$ to $K_2.$ Thus $K_1$ and $K_2$ have the
same rotation numbers. From this and the discussion in
Construction~\ref{constructNk} we see that the signs of the
stabilizations must be the same, thus $r(L_1)=\pm p(k-1)$ and
$r(L_2)=\pm q(k-1).$ Hence $S^3\setminus N$ is contactomorphic to
$S^3\setminus N_k^\pm.$ Thinking of the neighborhood $N(A)$ as a
product neighborhood $N(A')$ of the annulus $A'$ (using the notation
from Lemma~\ref{basecasethickening} and
Construction~\ref{constructNk}) we see that $N\cup N(A')$ must be a
universally tight contact structure on $T^2\times[0,1]$ (or else we
could find a bypass for one of the $L_i$ and hence thicken $N$). We
will only get a universally tight contact structure on $N\cup N(A')$
if $N$ has convex meridian discs with bypasses all of the same sign,
as one may easily check by computing the relative Euler class of
$N\cup N(A')$.

The statement about meridional ruling curves is obvious. To verify the
statement for the $\infty$-longitudes we need to use the function
$f_T$ that measures the rotation numbers of curves on convex tori $T$
that was discussed in Subsection~\ref{sec:rotationfunction}. We fix
our attention on $N_k^+$ (leaving the analogous case for $N_k^-$ to
the reader). Recall $L_1$ is a Legendrian unknot obtained from the
maximal Thurston-Bennequin unknot by $p(k-1)$ positive
stabilizations. Thus if $V_1$ is a standard neighborhood of $L_1$ and
$K$ is a $(p,q)$-ruling curve on $\partial V_1$ then we see
\[
f_{\partial V_1}(K)=pf_{\partial V_1}(\mu')+qf_{\partial V_1}(\lambda'')=qp(k-1),
\]
where $\mu'$ is a meridional curve on $\partial V_1$ and $\lambda''$
is a longitude.

If we isotope $\partial N_k^+$ so that the ruling curves are
$\infty'$-curves then there is a convex annulus $A''$ in $S^3$ from
the curve $K$ on $\partial V_1$ to an $\infty'$-longitude $\lambda'$
on $\partial N_k^+$ that has dividing curves that run from one
boundary component to the other. Thus we can rule $A''$ by curves
parallel to $K$ and $\lambda'$ and see that $K$ and $\lambda'$ are
Legendrian isotopic. In particular $f_{\partial
N_k^+}(\lambda')=r(\lambda')=qp(k-1).$ Let $\lambda$ denote a
$\infty$-longitude on $\partial V_k^+.$ Since we know that
$\lambda=\lambda'-pq\mu$ where $\mu$ is a meridian on $\partial V_k^+$
we see that
\[
f_{\partial N_k^+}(\lambda)=f_{\partial N_k^+}(\lambda') - pq f_{\partial N_k^+}(\mu)=0.
\]  
\end{proof}

\begin{proof}[Proof of Theorem~\ref{thm:nonthick}]
The theorem merely collects the statements of
Lemmas~\ref{basecasethickening}, \ref{lem:nonthicken},
and~\ref{mustbeNk}, together with Construction~\ref{constructNk}.
\end{proof}

\subsection{Partially thickenable tori}
\label{subsec:partiallythick}
In this section we use the notation established in
Construction~\ref{constructNk} and the subsequent lemmas of the
previous section.  We notice that $M_k^\pm$ can always be constructed
so that it is contained in any arbitrarily small neighborhood of the
annulus $A$ union $N(L_1)\cup N(L_2)$ from
Construction~\ref{constructNk} and any two such constructed $M_k^\pm$
are isotopic (and hence the corresponding $N_k^\pm$ are isotopic too).

Throughout this subsection we will always be talking about tori in the
knot type of a positive $(p,q)$-torus knot.

\begin{lemma}\label{lem1}
Let $N$ be a solid torus in $N_k^\pm$ with standard convex boundary
having dividing slope $s\in [e_k,e_k^a).$ If $\gcd(k, pq-p-q)=1$, then
there can be no bypass $D$ inside $N_k^\pm\setminus N$ for $\partial
N_k^\pm$ attached along a ruling curve of slope $\infty'$.
\end{lemma}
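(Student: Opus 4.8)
The plan is to argue by contradiction, converting the hypothetical bypass into a Legendrian representative of the regular Seifert fiber whose twisting exceeds the maximum permitted by Lemma~\ref{lem:nonthicken}. First I would record that the hypothesis $\gcd(k,pq-p-q)=1$ forces $n_k=\gcd(pq-p-q,k)=1$, so $\partial N_k^\pm$ is a standard convex torus carrying exactly two dividing curves of slope $e_k$. This is precisely the setting in which Honda's bypass-attachment theorem (Theorem~\ref{bpaferry}) applies, and it is the only place the coprimality hypothesis enters: with $2n_k>2$ dividing curves a bypass along a ruling curve could merely reduce $|\Gamma_{\partial N_k^\pm}|$ by two without changing the slope, and the argument below would collapse.

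So assume such a bypass $D\subset N_k^\pm\setminus N$ exists. Since $N_k^\pm$ is the local side of $S^3\setminus\partial N_k^\pm$ having $\partial N_k^\pm$ as its oriented boundary, $D$ is attached to the \emph{front}. Applying Theorem~\ref{bpaferry} with ruling slope $r=\infty'$ and dividing slope $s=e_k$, attaching $D$ yields a convex torus $T'$, still with two dividing curves, whose slope $s'$ is the point closest to $\infty'$ on the arc running from $\infty'$ counterclockwise to $e_k$ that has an edge to $e_k$. Because $\infty'$ and $e_k$ are joined by no Farey edge (an edge would force their intersection number to be $1$, whereas it equals $pq(k-1)+p+q>1$, computed next), the slope $s'$ lies strictly between $\infty'$ and $e_k$ on this arc. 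The torus $T'$ is isotopic to $\partial N_k^\pm$, hence bounds a solid torus in the knot type of $\K$, and its $\infty'$ ruling curves are Legendrian representatives of the regular Seifert fiber.

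The heart of the matter is a twisting computation in which the two longitudes must be kept apart. Writing $\mu,\lambda,\lambda'$ for the meridian, the $\infty$-longitude and the $\infty'$-longitude, one has $\lambda'=\lambda+pq\,\mu$, so in $\mathcal{C}$-coordinates $\infty'$ is the class $(pq,1)$ and $e_k$ is $(pq-p-q,k)$, giving $|\lambda'\cdot e_k|=pq(k-1)+p+q=:m$. Parallel copies of $\lambda'$ on a torus in the knot type of $\K$ are again regular fibers, so the framing the Seifert fibration assigns to an $\infty'$ ruling curve coincides with the framing induced by the torus; consequently on a convex torus with two dividing curves of slope $\delta$ such a ruling curve is a Legendrian fiber of fibration-twisting $-|\lambda'\cdot\delta|$. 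On $\partial N_k^\pm$ this returns $-m$, which is exactly the maximal fiber-twisting established in Lemma~\ref{lem:nonthicken}. On $T'$, however, since $s'$ lies strictly between $\infty'$ and $e_k$, the intersection number $|\lambda'\cdot\delta|$ grows monotonically from $0$ at $\delta=\infty'$ to $m$ at $\delta=e_k$ along this arc of small positive slopes, so $|\lambda'\cdot s'|<m$ (one can also see this from Lemma~\ref{lem:eaec}: $e_k$ is the mediant of its Farey neighbors, and $s'$ is a neighbor lying on the arc toward $\infty'$). Thus the $\infty'$ ruling curve on $T'$ is a Legendrian fiber with fibration-twisting $-|\lambda'\cdot s'|>-m$, contradicting Lemma~\ref{lem:nonthicken}; hence $D$ cannot exist.

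I expect the main obstacle to be bookkeeping rather than a conceptual leap. One must (i) correctly identify the attachment as front rather than back, so that $s'$ moves toward $\infty'$; (ii) distinguish the Seifert-surface framing ($\lambda$) from the fibration framing ($\lambda'$) and recognize that it is the latter that Lemma~\ref{lem:nonthicken} bounds — a sign/framing slip here is the easiest way to reach a false conclusion; and (iii) verify the inequality $|\lambda'\cdot s'|<m$, which rests on $s'$ sitting strictly between $\infty'$ and $e_k$. A secondary point to double-check is that $T'$ genuinely carries $\infty'$ ruling curves, i.e.\ that $s'\neq\infty'$, which is guaranteed by the absence of a Farey edge between $\infty'$ and $e_k$.
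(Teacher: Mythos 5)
Your construction of the torus $T'$ is fine, but the contradiction you draw from it does not follow, and the underlying problem is visible in the fact that your argument never uses the hypothesis that the dividing slope of $\partial N$ lies in $[e_k,e_k^a)$. That hypothesis is essential: for $k\geq 2$ one has $|\lambda'\cdot e_k^a|=|\lambda'\cdot e_k|-|\lambda'\cdot e_k^c|<m$, so applying the Imbalance Principle to an annulus of $\infty'$-ruling curves running from $\partial N_k^\pm$ to a parallel convex torus of slope $e_k^a$ inside $N_k^\pm$ produces exactly the kind of bypass you are trying to exclude (it lies in $N_k^\pm\setminus N''$, where $N''$ is the solid torus of slope $e_k^a$). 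So no argument that ignores where $\partial N$ sits can be correct. The step that actually fails is the appeal to Lemma~\ref{lem:nonthicken}: the twisting bound $-(pq(k-1)+p+q)$ established there concerns Legendrian curves isotopic to regular fibers in the complement $M_k=S^3\setminus N_k^\pm$ --- that is where the Seifert fibration, its $pq$-fold cover and the solid torus $S^1\times(\bigcup P)$ live. Your ruling curve lies on $T'$, which sits \emph{inside} $N_k^\pm$, so that bound does not apply to it; indeed, for $k\geq 2$ the $\infty'$-ruling curve on the interior torus of slope $e_k^a$ has twisting $-|\lambda'\cdot e_k^a|>-m$ and nothing is contradicted. (A global contradiction via the Bennequin inequality, $\tb=pq-|\lambda'\cdot s'|>pq-p-q$, is only available when $k=1$; and there your inequality $|\lambda'\cdot s'|<m$ fails for $s'=e_1^a$, since $|\lambda'\cdot e_1^a|=p+q+1>p+q=m$. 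The claim that $|\lambda'\cdot\delta|$ grows monotonically along an arc of slopes is also false in general.)

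The paper's proof is much shorter and leans entirely on the hypothesis you did not use: the bypass lives in the thickened torus $N_k^\pm\setminus N$, whose boundary slopes are $e_k$ and $s\in[e_k,e_k^a)$; attaching it would produce a convex torus of slope $e_k^a$ inside this region, so the contact structure on $N_k^\pm\setminus N$ would fail to be minimally twisting and would therefore contain a convex torus of meridional slope, contradicting tightness of $\xi_{std}$. If you want to keep your setup, the contradiction must come from the collar between $\partial N$ and $T'$ --- their slopes $s$ and $e_k^a$ are nested the wrong way around the Farey disk --- not from a twisting bound on Seifert fibers.
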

\begin{proof}
Notice that $N_k^\pm\setminus N$ is diffeomorphic to $T^2\times
[0,1].$ Moreover the slope on $T^2\times\{0\}=\partial N$ is in
$[e_k,e_k^a)$ and on $T^2\times\{1\}=\partial N_k^\pm$ is $e_k.$ If
such a bypass existed then there would be a torus $T$ in $T^2\times
[0,1]$ with dividing slope $e_k^a$. Thus the contact structure on
$T^2\times[0,1]$ is not minimally twisting, but this is impossible as
the contact structure on $S^3$ we are considering is tight.

(Notice that if $\gcd(k, pq-p-q)>1$ then a bypass can be attached that
merely reduces the number of dividing curves.)
\end{proof}

\begin{lemma}\label{moveA}
Assume that $\gcd(k, pq-p-q)=1$. 
Let $L_1$ and $L_2$ be the two unknots used in the construction of
$M_k^\pm$ and $A$ the annulus, see Construction~\ref{constructNk}. Let
$N(L_1)\cup N(L_2)$ be the standard neighborhood of $L_1\cup L_2$ used
in this construction. Suppose that $\widehat A$ is any convex annulus
in the complement of $N(L_1)\cup N(L_2)\cup N$, which has boundary
Legendrian ruling curves parallel to $\partial A$ on $\partial N(L_1)
\cup \partial N(L_2)$, and such that $\widehat A$ is isotopic to $A$
in the complement of $N.$ Then the dividing curves on $\widehat A$ run
from one boundary component to the other and there is a contact
isotopy of $S^3$ taking $N(L_1)\cup N(L_2)\cup A$ to $N(L_1)\cup
N(L_2)\cup \widehat A.$
\end{lemma}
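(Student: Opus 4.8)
The plan is to prove the two assertions in turn: first that the dividing set of $\widehat A$ is ``standard'', meaning its arcs all run from one boundary component to the other, and then to upgrade this, together with the hypothesis that $\widehat A$ is isotopic to $A$ in the complement of $N=N_k^\pm$, into a genuine contact isotopy of $S^3$. Throughout I would use that $\widehat A$ lives in the product neighborhood $N(A)$, since the complement of $N(L_1)\cup N(L_2)\cup N_k^\pm$ is exactly $N(A)$.

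For the first assertion I would argue by contradiction using that $N_k^\pm$ is non-thickenable (Lemma~\ref{lem:nonthicken}). Because the ruling curves $\partial\widehat A$ have nonzero twisting, the dividing set meets each boundary component, so the only ways $\widehat A$ can fail to be standard are through a boundary-parallel dividing arc or a closed dividing curve parallel to the core; closed curves are excluded by tightness together with the product structure on $N(A)$. A boundary-parallel arc yields a bypass for $\partial N(L_1)$ or $\partial N(L_2)$, attached along a ruling curve and contained in the complement of $N_k^\pm$. Attaching this bypass raises the Thurston--Bennequin invariant of the corresponding unknot $L_i$ within that complement, which unbalances the twisting of the ruling curves on $\partial N(L_1)$ and $\partial N(L_2)$; feeding this imbalance into the Imbalance Principle and the re-balancing computation~(\ref{slopecalculation}) of Lemma~\ref{basecasethickening} produces a thickening of $N_k^\pm$ to some $N_{k'}$ with $k'<k$, contradicting Lemma~\ref{lem:nonthicken}. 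As in Lemma~\ref{lem1}, the hypothesis $\gcd(k,pq-p-q)=1$ is exactly what guarantees the relevant tori carry only two dividing curves, so that a bypass must change the dividing slope rather than merely reduce the number of dividing curves. The step I expect to be the main obstacle is controlling the sign of this bypass, i.e.\ verifying that it genuinely raises $\tb(L_i)$ rather than destabilizing; here I would invoke that $N_k^\pm$ is universally tight with convex meridional bypasses all of one sign (Construction~\ref{constructNk}), so any bypass in its complement along these ruling curves has the slope-thickening sign, and then translate this through the coordinate change of Lemma~\ref{basecasethickening} and Theorem~\ref{bpaferry}.

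Once $\widehat A$ is known to be standard I would build the contact isotopy as follows. The boundary ruling curves of $\widehat A$ and $A$ have the same slopes on the convex tori $\partial N(L_1)\cup\partial N(L_2)$, hence are Legendrian isotopic there, so by Giroux Flexibility and Eliashberg's Theorem~\ref{thm:htpyequiv} I may arrange $\partial\widehat A=\partial A$ after a contact isotopy fixing $N(L_1)\cup N(L_2)$. Now $A$ and $\widehat A$ are standard convex annuli with identical Legendrian boundary, isotopic rel boundary in the complement of $N$, so the relative Isotopy Discretization of Subsection~\ref{sssec:discritize} gives a sequence $A=A_1,\dots,A_m=\widehat A$ with consecutive annuli differing by one bypass; the first part shows every $A_i$ is standard, so each bypass is trivial and each step is a contact isotopy rel boundary. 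Equivalently, cutting along $N(L_1)\cup N(L_2)\cup N(\widehat A)$ leaves a tight solid torus whose convex boundary carries the same dividing data as $N_k^\pm$, so by Honda's classification of tight contact structures on solid tori it is contactomorphic to $N_k^\pm$ by a map that is the identity on $N(L_1)\cup N(L_2)$ and carries $A$ to $\widehat A$; Theorem~\ref{thm:htpyequiv} then upgrades this contactomorphism of $S^3$ to the desired contact isotopy.
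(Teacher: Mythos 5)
Your treatment of the second assertion (relative Isotopy Discretization to reduce to disjoint annuli, then uniqueness of the tight contact structure on the complementary solid torus together with Theorem~\ref{thm:htpyequiv}) is essentially the paper's argument. The problem is in the first assertion, and it stems from a misreading of the hypothesis. In Lemma~\ref{moveA} the solid torus $N$ is not $N_k^\pm$: it is the torus of Lemma~\ref{lem1}, a solid torus sitting \emph{inside} $N_k^\pm$ with dividing slope in $[e_k,e_k^a)$. Consequently the complement of $N(L_1)\cup N(L_2)\cup N$ is not $N(A)$ but $N(A)\cup(N_k^\pm\setminus N)$, so $\widehat A$ --- and hence the bypass produced by a boundary-parallel dividing arc --- is allowed to lie inside $N_k^\pm\setminus N$. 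This generality is exactly what is needed when the lemma is applied in Proposition~\ref{prop:pt}, where the annuli have been isotoped rel boundary through $N_k^\pm\setminus N$ to clear intersections with bypass disks; if $\widehat A$ were confined to $N(A)$ the lemma would be nearly vacuous and useless there.

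Your contradiction breaks down precisely in that case. If the bypass lies in $N_k^\pm\setminus N$, attaching it to $\partial N(L_i)$ and re-balancing as in Lemma~\ref{basecasethickening} produces a solid torus of slope $e_{k'}$, $k'<k$, that contains $N$ but need not contain $N_k^\pm$; this is not a thickening of $N_k^\pm$, so Lemma~\ref{lem:nonthicken} is not contradicted. Nor is a slope-$e_{k'}$ solid torus containing $N$ absurd on its face: ruling that out is the content of Proposition~\ref{prop:pt}, whose proof relies on this very lemma, so invoking it would be circular. The contradiction the paper actually uses is different: the bypass, viewed as a bypass for $\partial N_k^\pm$ attached along an $\infty'$-ruling curve from inside $N_k^\pm\setminus N$, would yield a convex torus of slope $e_k^a$ in the thickened torus between $\partial N$ (slope $s\in[e_k,e_k^a)$) and $\partial N_k^\pm$ (slope $e_k$), making that region non-minimally twisting and forcing an overtwisted disk. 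That is Lemma~\ref{lem1}, and it is where the inner torus $N$ is genuinely used. Your proof cites Lemma~\ref{lem1} only for the count of dividing curves, never for this contradiction, and indeed never uses $N$ in an essential way --- yet without $N$ the first assertion is false, since $L_1\cup L_2$ is a stabilized Hopf link and admits annuli with boundary-parallel dividing arcs in the complement of $N(L_1)\cup N(L_2)$ alone. (Your non-thickenability argument does handle the complementary case of a bypass lying in $N(A)$, outside $N_k^\pm$; a complete proof needs both cases, or the case analysis the paper performs.)
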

\begin{proof}
First notice that if $\widehat A$ is disjoint from $A$ then the first
statement is clear since if the dividing curves were not as stated
there would be a bypass for $N^\pm_k$ on a $\infty'$ ruling curve
contradicting Lemma~\ref{lem1}. (To see this recall that
$N_k^\pm=S^3-(N(L_1)\cup N(L_2)\cup N(A))$.) For the second statement
notice that there will be a diffeomorphism of $S^3$ fixing (set-wise)
$N(L_1)\cup N(L_2)$ and sending $A$ to $\widehat A.$ Moreover we can
assume this diffeomorphism preserves the dividing sets on $\partial
N(L_1)\cup N(L_2)$ and sends $\Gamma_A$ to $\Gamma_{\widehat A}.$ Thus
we may isotope the diffeomorphism so that it is a contactomorphism in
a neighborhood of $N(L_1)\cup N(L_2)\cup A.$ As the contact structure
on the complementary solid torus is unique (as indicated in the proof
of Lemma~\ref{mustbeNk}) we can further isotope this map to a
contactomorphism of $S^3$. As the space of contactomorphisms of the
standard contact structure on $S^3$ (that fix a point) is contractible
it is standard to find a contact isotopy as desired.

If $\widehat A$ and $A$ are not disjoint then we can use Isotopy
Discretization as discussed in Subsection~\ref{sssec:discritize} to
find a sequence of annuli $A_1,\ldots, A_n$ such that $A_1=A, A_n=A',$
each $A_i$ is a convex annulus with boundary Legendrian ruling curves
parallel to $\partial A$ and for each $i=1, \ldots, n-1, A_i$ and
$A_{i+1}$ are disjoint. The result now follows.
\end{proof}

\begin{proposition}\label{prop:pt}
Let $N$ be a solid torus in $N_k^\pm$ with standard convex boundary
having dividing slope $s\in [e_k,e_k^a).$ If $\gcd(k, pq-p-q)=1$, then
$N$ will thicken to a solid torus $N'$ of slope $e_k$ but not
beyond. Moreover, $N'$ is isotopic to $N_k^\pm.$
\end{proposition}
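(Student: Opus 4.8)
The plan is to establish the three claims of the proposition -- that $N$ thickens to slope $e_k$, that it admits no further thickening, and that the maximal thickening $N'$ is isotopic to $N_k^\pm$ -- by reducing everything to the non-thickenability of $N_k^\pm$ proved in Lemma~\ref{lem:nonthicken}. First I would note that a thickening to slope $e_k$ exists for free: since $\gcd(k,pq-p-q)=1$ we have $n_k=1$, so $\partial N_k^\pm$ is a convex torus with exactly two dividing curves of slope $e_k$, and because $N\subseteq N_k^\pm$ with $s\geq e_k$, the torus $N_k^\pm$ is itself a thickening of $N$ whenever $s\neq e_k$. This supplies the required thickening to slope $e_k$, so the whole statement reduces to showing that no thickening of $N$ attains a dividing slope beyond $e_k$. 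Once that is known, any maximal thickening $N'$ of $N$ has slope exactly $e_k$ and, being non-thickenable, is isotopic to $N_k^\pm$ by Lemmas~\ref{basecasethickening} and~\ref{mustbeNk} together with $\gcd(k,pq-p-q)=1$ (which forces $e_k=k/(pq-p-q)$ to be in lowest terms and hence pins down $k$).

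To prove the ``not beyond'' statement I would argue by contradiction. Suppose $N$ thickens to a solid torus $N''$ with dividing slope strictly smaller than $e_k$ (thickening only decreases the slope, and the possibility of increasing past $s$ toward $e_k^a$ is excluded by the minimal twisting of $N_k^\pm\setminus N$ and Lemma~\ref{lem1}). After possibly enlarging $N''$ I may assume it is non-thickenable, so by Lemmas~\ref{basecasethickening} and~\ref{mustbeNk} it is isotopic to $N_{k'}^\pm$ with $e_{k'}<e_k$. The complement $N''\setminus N$ is then a $T^2\times[0,1]$ that is minimally twisting (forced by tightness of $\xi_{std}$), with boundary slopes $s\geq e_k$ and $e_{k'}<e_k$; consequently it contains a convex torus of slope $e_k$ with two dividing curves, bounding a solid torus $\widehat N$ with $N\subseteq\widehat N\subsetneq N''$. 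Since $\widehat N$ has slope $e_k$ yet is properly contained in $N''$ of slope $e_{k'}\neq e_k$, it thickens; hence it suffices to identify $\widehat N$ with $N_k^\pm$, which is non-thickenable by Lemma~\ref{lem:nonthicken}, to reach a contradiction.

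The hard part will be exactly this identification of $\widehat N$ with $N_k^\pm$. I would carry it out in the style of the proofs of Lemmas~\ref{mustbeNk} and~\ref{lem1}: present the complement $S^3\setminus\widehat N$, following Construction~\ref{constructNk}, as the union of a standard neighborhood $N(L_1)\cup N(L_2)$ of a Legendrian Hopf link with a product neighborhood of the splitting annulus $A$; use Lemma~\ref{moveA} to normalize this annulus up to contact isotopy so that its dividing curves run from one boundary component to the other; and then invoke the uniqueness of the tight contact structure on the resulting complementary region to produce a contactomorphism of $(S^3,\xi_{std})$ carrying $\widehat N$ onto $N_k^\pm$, which Theorem~\ref{thm:htpyequiv} upgrades to a contact isotopy. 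The delicate point in this step -- and the place where the hypothesis $s\in[e_k,e_k^a)$ is genuinely used -- is checking that $\widehat N$ carries the correct universally tight structure, with meridional bypasses all of a single sign, rather than a competing tight structure of slope $e_k$; this sign is inherited from the inclusion $N\subseteq N_k^\pm$ and the minimal twisting of $N''\setminus N$, with Lemma~\ref{lem1} ruling out the mixed-sign configuration that would otherwise allow a jump to slope $e_k^a$. Granting this, the contradiction is complete, and the reduction in the first paragraph then delivers both the ``not beyond'' conclusion and the identification $N'\cong N_k^\pm$.
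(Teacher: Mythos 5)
Your reduction in the first two paragraphs is fine as far as it goes: the thickening to slope $e_k$ is immediate from $N\subseteq N_k^\pm$, and if $N$ thickened beyond $e_k$ it would thicken to some non-thickenable $N_{k'}$ with $k'<k$, whose minimally twisting layer $N_{k'}\setminus N$ contains an intermediate convex torus $\widehat N$ of slope $e_k$ with two dividing curves. The gap is the step you yourself flag as delicate: identifying $\widehat N$ with $N_k^\pm$. None of the tools you cite can do this. Lemma~\ref{mustbeNk} takes non-thickenability as a \emph{hypothesis} and uses it to pin down the complement (it is what forces the Hopf link components $L_1, L_2$ in the complement to have the specific maximal Thurston--Bennequin invariants and the annulus to be standard); applied to your $\widehat N$, which by construction thickens to $N_{k'}$, that analysis would return the Hopf link data of $N_{k'}$, not of $N_k^\pm$, and cannot produce the desired contactomorphism. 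Nor does the sign of the meridional bypasses on $\widehat N$ help: a solid torus of slope $e_k$ with two dividing curves sitting inside a universally tight solid torus (for instance inside $N_1$, or inside $N_{k'}^\pm$) is itself universally tight with all meridional bypasses of one sign, yet such tori do thicken. Being isotopic to $N_k^\pm$ is a property of the embedding --- equivalently of the complement --- not of the contact structure on the solid torus alone.

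A symptom of the problem is that your contradiction argument never uses the hypothesis $N\subseteq N_k^\pm$ in an essential way, so it would apply verbatim to any solid torus of slope $s\in[e_k,e_k^a)$ with two dividing curves; but by Corollary~\ref{cor:tori}(2)(b) all but two of those thicken all the way to $N_1$, so the conclusion is false for them. The paper's proof confronts exactly this point. Assuming a thickening to $N_{k'}$, it produces bypass disks $D_1,\dots,D_n$ for $\partial N$ along an annulus of $\infty'$-ruling curves, and then, in the technical heart of the argument, isotopes the spine $X=L_1\cup L_2\cup A$ of the complement $M_k^\pm$ off these disks (using Lemma~\ref{moveA}, the non-thickenability of $N_k^\pm$, and an innermost curve/arc/``star'' analysis) so that all the $D_i$ lie in $N_k^\pm\setminus N$; attaching them there would create a convex torus of slope $e_{k'}<e_k$ inside $N_k^\pm$, which is impossible in a tight structure. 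That step, which is where the inclusion $N\subseteq N_k^\pm$ actually enters, has no counterpart in your proposal, and without it there is no contradiction to be had.
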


\begin{remark}\label{rem:pt}
Notice that if $\gcd(k,pq-p-q)>1$ then $N_k^\pm$ can be thinned to a
torus $N'$ that has the same dividing slope as $N_k^\pm$ but fewer
dividing curves. This will allow for the destabilization of the
Legendrian knots $L_1$ and $L_2$ used in
Lemma~\ref{basecasethickening}, which in turn, allow for the
thickening of $N'$ past $N_k^\pm$. Thus we see when $\gcd(k,
pq-p-q)>1$ that there are no partially thickenable tori in $N_k^\pm$.
\end{remark}

\begin{remark}
For the right handed trefoil knot there is another, arguably simpler,
proof of this result that is more in the spirit of the previous
subsection. We present a unified proof for all $(p,q)$ here and refer
to \cite{TosunThesis} for the alternate argument.
\end{remark}

\begin{proof}
Suppose that $N$ can be thickened past the slope $e_k$. Then it can be
thickened to $N_{k'}$ for some $k'<k.$ We can arrange $N$ to have
ruling curves isotopic to $\infty'$-longitudes. Taking an annulus
$\widetilde A$ from a ruling curve on $\partial N$ to a ruling curve
on $\partial N_{k'}$ (of slope $\infty'$) we see that there are enough
disjoint bypasses $D_1, \ldots, D_n$ along $\widetilde A$ for
$\partial N$ to thicken $N$ to a solid torus with dividing slope
outside the interval $[e_k,e_k^a).$ If the bypasses were contained in
$N_k$ this would of course be a contradiction, as 
we could attach them to $\partial N$ to obtain a convex torus in $N_{k}$
with slope $e_{k'}$. We now argue that we can isotope $N_k$ so that it
contains all the bypasses. This contradiction will imply that $N$
cannot be thickened to $N_{k'}$ for any $k'<k.$

To this end let $L_1$ and $L_2$ be the two unknots used in the
construction of $M_k^\pm$ and $A$ the annulus, see
Construction~\ref{constructNk}. From the construction we know that
$M_k^\pm$ is obtained by taking the union of arbitrarily small
neighborhoods $N(L_1)\cup N(L_2)$ of $L_1\cup L_2$ and $N(A)$ of $A$
(and rounding corners). Consider the 2-complex $X$ obtained from
$L_1\cup L_2$ by attaching (an extension of) $A.$ Clearly $M_k^\pm$
can be isotoped to be contained in any arbitrarily small neighborhood
of $X.$

We now consider the intersection of $X$ with the bypasses above. First
we notice there is a contact isotopy of the $D_i$ making them
transverse to $X.$ So the intersection consists of closed curves,
vertices (corresponding to the intersection of $D$ with $L_1\cup L_2$)
and arcs. We may now choose standard neighborhoods $N(L_i)$ of the
Legendrian knots $L_i$ (and possibly isotope the interiors of the
$D_i$) so that $N(L_i)$ intersects the bypass disks in disks (that is
each vertex of $X\cap D_i$ becomes a disk) that are disjoint from the
simple closed curves in $X\cap D_i$. We may now isotope $X$ so that
$X-(X\cap (N(L_1)\cup N(L_2)))$ is a convex annulus $\widehat A$ with
Legendrian boundary ruling curves on $\partial (N(L_1)\cup N(L_2))$
and intersects the bypass disks as $X-(X\cap (N(L_1)\cup N(L_2)))$
does.

Let $D$ denote one of these bypasses. We will show how to isotope $D$
to be disjoint from $M_k^\pm$ and observe that this argument can be
applied to each of the $D_i$ resulting in the desired
contradiction. It is clear that if $D\cap X=\emptyset$ then $D$ may be
assumed to be contained in $N_k^\pm.$ Thus we show how to eliminate
the intersections between $D$ and $X$. We first show how to remove the
closed curves from the intersection. Let $\gamma$ be an innermost
closed curve in $D\cap X$. (That is $\gamma$ bounds a disk on $D$ that
does not contain any other points of intersection between $X$ and
$D$.) Notice that from the set-up above $\gamma$ is an intersection
between $\widehat A$ and $D.$ We can isotope $\widehat A$, rel
boundary, so as to eliminate $\gamma$ from $X\cap D.$ (Notice along
the way, we might also eliminate some intersections between $X$ and
other $D_i$ but we do not increase the number of intersections between
$D_i$ and $X$.) By Lemma~\ref{moveA} we see that this isotopy can be
done by a contact isotopy, thus resulting in a new $X$ with all the
above properties but fewer intersections with the disk $D.$ Continuing
we can assume that $D\cap X$ contains no simple closed curves.

Now suppose that $\gamma$ is an arc in $D\cap X$ that connects two
vertices. We can take an interval in $\gamma$ that is disjoint from
the intersection of $D$ and $N(L_1)\cup N(L_2)$ and then isotope
$\widehat A$ as above to remove this interval from the intersection of
$X$ and $D.$ Thus $X\cap D$ consists of ``stars'' and arcs; that is,
each connected component of the intersection is either an arc (with
both endpoints on $\partial D$) or has a single vertex with several
edges (connecting the vertex to $\partial D$). We again notice that
the arcs of intersection are intersections between $\widehat A$ and
$D$ and thus we may remove them as above if they are outermost (that
is, separates off a disks from $D$ that does not contain any points of
intersection between $D$ and $X$).

We are now left to consider outermost ``stars''. Given such a star we
assume that the vertex comes from an intersection between $D$ and
$L_1.$ So we have a disk $D'\subset D\cap N(L_1)$ corresponding to the
vertex and the $p$ edges corresponding to $\widehat A\cap D$ (we would
have $q$ edges if $D$ intersected $L_2$ at the vertex under
consideration). Recall $M_k^\pm$ is obtained by taking the union of an
$I$ invariant neighborhood of $\widehat A$ and $N(L_1)\cup N(L_2)$ and
rounding corners. So we can isotope $D$ slightly near $\widehat A$ so
that $M_k^\pm \cap D$ consists of $D'$ union $p$ strips corresponding
to thickenings of the edges of $D\cap \widehat A$. From this it is
easy to see that $\partial (M_k^\pm\cap D)$ consists of $p$ arcs,
$\gamma_1,\ldots, \gamma_p.$ One of these arcs, which we denote
$\gamma_p$, divides $D$ into two disks, one of which contains all the
other $\gamma_i$'s (and no other intersections with $X$). Denote this
disk $C$. Notice that $N(L_2)$ does not intersect $C.$
\begin{figure}[ht]
        \relabelbox \small  {\centerline{\epsfbox{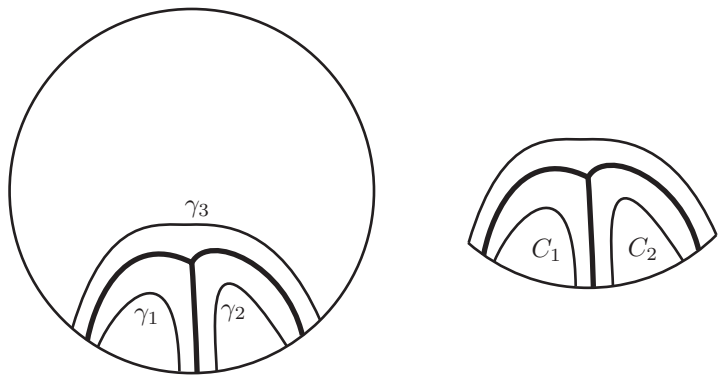}}}
  \relabel {1}{$\gamma_1$}
  \relabel {2}{$\gamma_2$}
  \relabel {3}{$\gamma_3$}
  \relabel{a}{$C_1$}
  \relabel{b}{$C_2$}
  \endrelabelbox
        \caption{On the left is the disk $D$ and a star component of
$D\cap X$ is in bold. The intersection of a neighborhood of $X$ with
$D$ is also shown along with the curves $\gamma_i$ that make up the
boundary of this region. On the right is the disk $C$ that $\gamma_3$
cuts off of $D$ and the sub disks $C_i$.}
        \label{fig:disk}
\end{figure}

Each arc $\gamma_i, i<p,$ separates a disk $C_i$ from $C$ that is
disjoint from the interior of $M_k^\pm.$ If we push $\partial M_k^\pm$
across the disk $C_i$ then we get a new torus $T'$ in $N_k^\pm-N.$
Recall that the ruling slope on $\partial N(L_2)$ was by
$(q,p)$-curves and that the isotopy of $\partial M_k^\pm$ to $T'$ can
be done fixing one of these curves. Thus the contact twisting of the
ruling curve is still $-(pq(k-1)+p+q)$, however, the ruling curve on a
convex torus with dividing slope in $[e_k, e_k^a)$ will always have
twisting less than or equal to $-(pq(k-1)+p+q)$ with equality if and
only if the dividing slope is $e_k.$ Thus we see that $T'$ has
dividing slope $e_k$ and hence is contact isotopic to $\partial
M_k^\pm.$ That is we can find a contact isotopy that eliminates one of
the arcs of intersection. Continuing in this way we push $\partial
M_k^\pm$ across the other disks $C_i$ by a contact isotopy resulting
in the disk $C$ being contained in $M_k^\pm.$ Now pushing $\partial
M_k^\pm$ across $C$ will not change the dividing set since $N_k^\pm$
is a non-thickenable torus. Combining these isotopies we have removed
the outermost ``star'' in $D\cap X.$

By successively removing outermost arcs or ``stars'' from $D\cap X$ we
can eventually make $D$ disjoint from $X$ and thus contained in
$N_k^\pm-N.$
\end{proof}

\begin{proposition}
Let $N$ be a solid torus in $N_k^\pm$ with standard convex boundary
having dividing slope $s\not \in [e_k,e_k^a).$ Then $N$ will thicken
to the solid torus $N_1$ (which is a standard neighborhood of the
maximal Thurston-Bennequin invariant Legendrian $(p,q)$-torus knot).
\end{proposition}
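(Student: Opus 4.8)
The conclusion is that some thickening of $N$ has dividing slope $e_1=\frac{1}{pq-p-q}$, since $N_1$ is the standard neighborhood of the maximal Thurston--Bennequin $(p,q)$--torus knot and so has this dividing slope. The plan is to produce an explicit sequence of bypass attachments thickening $N$ and realizing a path of dividing slopes in the Farey tessellation that runs from $s$ down to $e_1$ while avoiding every ``trapping'' interval $[e_j,e_j^a)$ with $j\in\mathcal I$. First I record the constraint on $s$: as $N\subset N_k^\pm$, the region $N_k^\pm\setminus N$ is a minimally twisting tight $T^2\times[0,1]$ with outer dividing slope $e_k$ and inner dividing slope $s$, so the hypothesis $s\notin[e_k,e_k^a)$ places $s$ on the complementary Farey arc, namely $s\geq e_k^a$ or $s$ negative (or infinite).

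Assume first $s\geq e_k^a$. Since $e_k<e_k^a\leq s$ and $e_k^a$ is the Farey parent of $e_k$ (Lemma~\ref{lem:eaec}), the slope $e_k^a$ lies on the Farey geodesic from $e_k$ to $s$, so the minimally twisting structure on $N_k^\pm\setminus N$ contains a convex torus of slope $e_k^a$ bounding a solid torus $N'$ with $N\subset N'\subset N_k^\pm$. As thickening $N'$ to $N_1$ would thicken $N$ to $N_1$, I may replace $N$ by $N'$ and assume $s=e_k^a$. Now I thicken along the Farey geodesic from $e_k^a$ toward $e_1$, at each stage attaching a bypass along an appropriately chosen ruling curve (Theorem~\ref{bpaferry}) so as to decrease the dividing slope by one Farey step. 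The key point is that the descending path never lands inside a trapping interval: whenever it reaches an exceptional slope $e_j$ with $j\in\mathcal I$, I instead stop at $e_j^a$ and use the Farey edge between $e_j^a$ and $e_j^c$ guaranteed by Lemma~\ref{lem:eaec} to jump directly to $e_j^c<e_j$, skipping $e_j$ together with the half-open interval $[e_j,e_j^a)$. Since there are only finitely many such $e_j$ between $e_1$ and $s$, and since $1\notin\mathcal I$ so that $e_1$ itself is not a trap, the process terminates at a torus of slope $e_1$, which is $N_1$.

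To see that the required bypasses actually exist I would argue as in Lemma~\ref{basecasethickening} and Construction~\ref{constructNk}. Put $\partial N$ in standard form with ruling curves of slope $\infty'$ and consider a convex annulus from $\partial N$ to the tori $\partial N(L_1),\partial N(L_2)$ bounding neighborhoods of the Hopf-link cores in $S^3\setminus N$. Because $s\neq e_k$, the twisting of the $\infty'$-ruling curves on $\partial N$ is not the maximal balanced value, so the Imbalance Principle produces a bypass raising one of $\tb(L_1),\tb(L_2)$; raising both to $-1$ yields the maximal-$\tb$ Hopf link, whose complement is exactly $S^3\setminus N_1$ by the $k=1$ case of Lemma~\ref{basecasethickening}. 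This is the mechanism realizing the Farey moves above. The negative (or infinite) slope case is handled the same way, now routing the Farey geodesic from $s$ through $0$ up to $e_1$ and again jumping each trapping interval through its $e_j^a$--$e_j^c$ edge.

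The crux is the non-trapping claim, and this is exactly where the hypothesis $s\notin[e_k,e_k^a)$ enters. In the complementary situation of Proposition~\ref{prop:pt} the inner slope lies in $[e_k,e_k^a)$ and Lemma~\ref{lem1} forbids the $\infty'$-bypass that would reach slope $e_k^a$, so $N$ is stuck at $e_k$; here, because $s\geq e_k^a$ (or is negative), the region $N_k^\pm\setminus N$ already supplies a slope-$e_k^a$ torus and that obstruction disappears. The main thing to verify carefully is therefore that at each exceptional slope $e_j$ the escape bypass carrying $e_j^a$ to $e_j^c$ genuinely exists in the complement of the current torus and genuinely thickens (rather than thins) it, so that the process strictly progresses toward $e_1$ and cannot stall at any non-thickenable $N_j^\pm$ with $j>1$; in the remaining case $\gcd(j,pq-p-q)>1$ no trapping occurs at all, by the thinning argument of Remark~\ref{rem:pt}.
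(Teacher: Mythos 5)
Your opening reduction is exactly the paper's: since $N_k^\pm\setminus N$ is a minimally twisting $T^2\times[0,1]$ and $s\notin[e_k,e_k^a)$, there is an intermediate convex torus of slope $e_k^a$, so one may assume $N$ has been replaced by a torus $N'$ of slope $e_k^a$. From there, however, your argument rests on a step you yourself flag as unverified, and that step is the entire content of the proposition: that the descent never stalls at a non-thickenable $N_j^\pm$ with $1<j<k$. The mechanism you propose for this --- a per-slope ``escape bypass'' realizing a controlled jump from $e_j^a$ to $e_j^c$ --- is neither established nor how the thickening actually proceeds. Theorem~\ref{bpaferry} only describes the effect of a bypass already known to exist; the Imbalance Principle applied to an annulus between the current torus and $\partial N(L_i)$ produces a bypass for $\partial N(L_i)$ (raising $\tb(L_i)$), not a bypass for the current torus realizing a prescribed one-step Farey move, so you get no control over the intermediate slopes and in particular no way to guarantee you land on $e_j^c$ rather than on $N_j^\pm$ itself. (A smaller slip: in the negative-slope case the thickening path runs through $\infty$, not through $0$; slope $0$ is the meridian and lies in the thinning direction.)

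The paper closes the gap by giving up all control over intermediate slopes and inducting on $k$ instead. Once $N'$ has slope $e_k^a\neq e_k$, the balancing argument of Lemma~\ref{basecasethickening} applies verbatim: the annulus from $\partial N'$ to $\partial N(L_1)$ is imbalanced, $\tb(L_1)$ increases, and rebalancing shows $N'$ thickens to a torus of slope $e_{k'}$ with $k'$ strictly smaller than $k$. If that torus is one of the non-thickenable $N_{k'}^\pm$, one does not escape it by constructing a bypass: by Lemma~\ref{lem:basicrange} the intervals of influence are disjoint, so $e_k^a\notin[e_{k'},e_{k'}^a)$ (and when $\gcd(k',pq-p-q)>1$ Remark~\ref{rem:pt} and Theorem~\ref{thm:partially} apply instead), hence the proposition for $k'$ applies to $N'\subset N_{k'}^\pm$ and the induction terminates at $N_1$. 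The disjointness statement of Lemma~\ref{lem:basicrange} is the fact doing the work you were hoping the escape bypasses would do; if you rewrite your second half as this induction, the proof goes through.
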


\begin{proof}
Given such a torus $N$ we know from the construction and discussion in
Subsection~\ref{ss:nont} that we can thicken $N$ to a solid torus $N'$
whose boundary is convex with two dividing curves of slope $e_k^a$ and
in the complement of $N'$ we will have $M_k^\pm$. Now taking an
annulus from $N'$ to $\partial N(L_1)$ (using the notation from
Construction~\ref{constructNk}) we will see that there is a bypass for
$\partial N(L_1)$ and thus we can increase the Thurston-Bennequin of
$L_1.$ As in the proof of Lemma~\ref{basecasethickening} we see that
$N'$ will thicken to some $N_{k'}^\pm$ with $k'<k.$ Thus we know we
can thicken past $N_{k'}^\pm$ unless $k'=1$, and hence we can thicken
to $N_1.$
\end{proof}

We are now ready to establish the main results stated in the
introduction concerning partially thickenable tori.
\begin{proof}[Proof of Theorem~\ref{thm:partially}]
The statements in the theorem just collect the facts from
Proposition~\ref{prop:pt}, Remark~\ref{rem:pt} and
Lemma~\ref{lem:basicrange}.
\end{proof}

\begin{proof}[Proof of Corollary~\ref{cor:tori}]
For statement (1) notice that if $n\leq s<n+1$ then a convex torus
with two dividing curves of slope $s$ will lie inside one of the
$N_m^\pm$ for $m=2,\ldots n$ or $N_1$. From the classification of the
$N_m^\pm$ we know there is a convex torus with two dividing curves and
infinite dividing slope inside each of the $N_m^\pm$ and it will
cobound with $\partial N_m^\pm$ a unique basic slice,
\cite{Honda00a}. Moreover there are two distinct such tori in $N_1$
and each of these two will cobound with $\partial N_1$ a unique basic
slice. Inside a basic slice there is a unique, up to contactomorphism,
convex torus of slope $s$. Thus given any convex torus $T$ with two
dividing curves of slope $s$ we can use this data to construct a
contactomorphism of $S^3$ taking $T$ to one of the tori described
above. Then the discussion in Subsection~\ref{sssec:isotopy} gives a
contact isotopy from $T$ to one of these tori. As there are $2n$ such
tori this establishes statement (1) of the theorem. 

The other statements in the corollary have analogous proofs. 
\end{proof}

\subsection{Legendrian knots on tori}\label{knotsontori}
In this section we prove two fundamental propositions about Legendrian
knots on partially thickenable, and non-thickenable, tori that will be
necessary in our classification of cables of torus knots.
\begin{proposition}\label{curvesonpt1}
Suppose $\K$ is a positive $(p,q)$-torus knot and $N_n^\pm$ is a solid
torus constructed above in Subsection~\ref{ss:nont}, for some $n>1$
with $\gcd{(n,pq-p-q)}=1$. Let $\frac sr\in[e_n,e_n^a)$ and $k= |\frac
sr \cdot e_n^a|$. If $T$ is the convex torus in $N_n^\pm$ with two
dividing curves and dividing slope $\frac sr$ and $L$ is a Legendrian
divide on $T$, then:
\begin{enumerate}
\item For any $y\in\N \cup\{0\}$ and $x< k$, any convex torus $T'$ on
which the Legendrian knot $S_\pm^xS_\mp^y(L)$ sits bounds a partially
thickenable, or non-thickenable, torus in $N_n^\pm$.
\item The Legendrian knot $S_\pm^kS_\mp^y(L)$ sits on a convex torus
$T'$ that bounds a solid torus that thickens to $N_1$.
\end{enumerate}
\end{proposition}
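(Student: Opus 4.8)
The plan is to realize the knot $S_\pm^k S_\mp^y(L)$ explicitly as a Legendrian ruling curve on a convex torus that sits inside $N_n^\pm$ but whose dividing slope has just escaped the interval of influence $[e_n,e_n^a)$, and then to thicken. Since $\frac sr\in[e_n,e_n^a)$ we have $e_n^a>\frac sr$, and thickening decreases the dividing slope toward $e_n$ (as in Proposition~\ref{prop:pt}), so thinning $N$ toward the core \emph{increases} it. First I would produce, by such thinning, a convex torus $\widetilde T\subset N_n^\pm$ with two dividing curves and dividing slope exactly $e_n^a$; such a torus exists inside $N_n^\pm$ by the analysis of partially thickenable tori summarized in Theorem~\ref{thm:partially}. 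Let $\widetilde N$ be the solid torus it bounds and $W=N\setminus\mathrm{int}(\widetilde N)\cong T^2\times[0,1]$ the collar between $T$ (outer, slope $\frac sr$) and $\widetilde T$ (inner, slope $e_n^a$). Because $N_n^\pm$ is one of the two universally tight solid tori of Construction~\ref{constructNk}, its convex meridional disks have bypasses all of one sign, so every basic slice of the sub-collar $W$ has the single sign $\pm$.

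Next I would push $L$ across $W$ and track the classical invariants. The Legendrian divide $L$ on $T$ has $\tb(L)=rs$ by Lemma~\ref{tbcomp}(1), while the Legendrian ruling curve $\ell$ of slope $\frac sr$ on $\widetilde T$ has $\tb(\ell)=rs-\left|\frac sr\cdot e_n^a\right|=rs-k$ by Lemma~\ref{tbcomp}(2). Thus $\ell$ is obtained from $L$ by exactly $k$ stabilizations in crossing $W$, and since every basic slice of $W$ has sign $\pm$, each of these is a $\pm$-stabilization, by the identification of basic-slice signs with stabilization signs recalled in Subsection~\ref{sec:strategy}. Hence the ruling curve $\ell$ on $\widetilde T$ is Legendrian isotopic to $S_\pm^k(L)$. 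As $\widetilde T$ has dividing slope $e_n^a\notin[e_n,e_n^a)$, Theorem~\ref{thm:partially} shows $\widetilde N$ thickens to $N_1$, so $S_\pm^k(L)$ already lies on a torus with the desired property; this handles the case $y=0$.

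Finally I would create the extra $S_\mp^y$ factor inside $N_1$. Having thickened $\widetilde T$ to the standard neighborhood $N_1$ of the maximal Thurston--Bennequin $(p,q)$-torus knot, I would thin back toward the core: because $N_1$ is a standard neighborhood of a Legendrian knot admitting both signs of stabilization, its collar contains basic slices of \emph{both} signs, so I can select a nested convex torus $T'\subset N_1$ (with two dividing curves and dividing slope at least $e_n^a$) across which $S_\pm^k(L)$ picks up precisely $y$ further stabilizations, all of sign $\mp$. The resulting ruling curve on $T'$ is then $S_\pm^k S_\mp^y(L)$, and $T'$ again thickens to $N_1$ since it lies in $N_1$ with dividing slope outside $[e_n,e_n^a)$ (Theorem~\ref{thm:partially}).

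The hard part will be the middle step: one must be certain that the $k$ stabilizations accumulated in crossing $W$ are genuinely all of the single sign $\pm$, with none of the opposite sign secretly appearing. The count itself is forced to be exactly $k=\left|\frac sr\cdot e_n^a\right|$ by the two $\tb$ computations, but the uniform sign relies on the universally tight, same-sign structure of $N_n^\pm$ from Construction~\ref{constructNk}; this is precisely where the hypothesis $\gcd(n,pq-p-q)=1$ enters, since together with Lemma~\ref{lem1} it forces $e_n^a$ to be the first slope at which a torus thinned out of $N_n^\pm$ can escape, so that crossing $W$ cannot introduce a cancelling $\mp$-stabilization before slope $e_n^a$ is reached.
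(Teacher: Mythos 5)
Your argument for item~(2) is essentially the paper's: you locate a convex torus of slope $e_n^a$ with two dividing curves inside $N_n^\pm$, observe that the two $\tb$ computations force exactly $k=\left|\frac sr\cdot e_n^a\right|$ stabilizations in crossing the collar, and pin down their common sign using the universally tight, same-sign structure of $N_n^\pm$; the paper carries out that last step as a relative Euler class computation for the thickened torus, which is the precise form of the observation you make. Your handling of the extra $S_\mp^y$ factor is workable too, though the paper realizes it more directly by letting the curve meet the dividing set of the slope-$e_n^a$ torus inefficiently rather than thinning further inside $N_1$.

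The genuine gap is that you never address item~(1), which is the substantive half of the proposition and the half the classification actually leans on (it is what prevents $S_\pm^xS_\mp^y(K_\pm)$, $x\le c$, from being isotopic to stabilizations of the $L_i$ or of $K_\mp$ in Proposition~\ref{prop:classify1}). Item~(1) is universally quantified: \emph{every} convex torus on which $S_\pm^xS_\mp^y(L)$ sits, for $x<k$, must bound a partially (or non-) thickenable solid torus in $N_n^\pm$. Exhibiting one well-placed torus, as you do for item~(2), says nothing about all the others. The paper's proof first rules out any torus of dividing slope $e_n^a$ via the stabilization count (sitting on such a torus would force $x\ge k$), and then, given a hypothetical torus $T'$ that is not partially or non-thickenable in $N_n^\pm$, applies Isotopy Discretization to the complementary annuli to produce a chain of convex tori $T=T_1,\ldots,T_m=T'$ related by single bypass attachments; it then argues inductively that each $T_i$ bounds a partially (or non-) thickenable torus in $N_n^\pm$ --- a bypass from outside only thickens the torus it bounds, while a bypass from inside keeps the dividing slope in $[e_n,e_n^a]$ because $e_n$ and $e_n^a$ share a Farey edge, and the endpoint $e_n^a$ is again excluded by the stabilization count. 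Without this control of arbitrary isotopies (or some equivalent), item~(1) is unproved and the proposal does not establish the proposition.
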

\begin{proof}
We will concentrate on the Legendrian divide $L$ on a torus $T$ inside
$N_n^+$ below, but analogous arguments also work for $N_n^-$. Recall
that inside the solid torus $N_n^+$ there is a convex torus $T'$ with
two dividing curves and dividing slope $e_n^a$. Let $L'$ be a
Legendrian ruling curve on $T'$ of slope $\frac sr$. Using an annulus
$A$ that $L$ and $L'$ cobound, it is easy to see that $L'$ is obtained
from $L$ by stabilizing $k=|\frac sr \cdot e_n^a|$ times.

We want to compute the difference between the rotation number of $L$
on $T$ and $L'$ on $T'$. The region between $T$ and $T'$ is a
thickened torus and the difference in these rotation numbers will be
given by the value of the relative Euler class of the thickened torus
evaluated on the annulus $A$. To compute this we use the
classification of tight contact structures on thickened tori, as given
in \cite{Honda00a}, and the fact that $N_n^+$ is universally tight. In
particular, we can compute the relative Euler class $e$ of the
thickened torus cobounded by $\partial N_n^+$ and $T$:
\[
P.D.(e)=( (r,s) -(b,a)) \in H_1(T^2\times I;\Z),
\]
where $P.D.$ stands for the Poincar\'e Dual and we are using the basis
for $H_1$ given by the meridian and longitude and $e_n^a=\frac ab$.
We can use this to compute the difference between the rotation number
of the $(r,s)$ curve on $\partial N_n^+$ and on $T$ which is
$(r(s-a)-s(r-b)=(sb-ra)=\frac sr \cdot \frac ab=\frac sr\cdot
e_n^a>0$. That is, $L'$ is obtained from $L$ by $k$ positive
stabilizations. According to Theorem~\ref{thm:partially} the solid
torus that $T'$ bounds can be thickened to $N_1$. As any further
negative stabilizations of $L$ can be seen on $T'$ as well (by having
$L$ intersect the dividing curves in a non-minimal way) we have
established the second point in the proposition.

For the first point in the proposition notice that the discussion
above shows that $S_+^xS_-^y(L)$, with $x<k$, cannot sit as a
Legendrian curve on a convex torus with dividing slope $e_n^a$ (since
otherwise $x\geq k$). Suppose that $S_+^xS_-^y(L)$ is also isotopic to
a curve on a convex torus $T'$ that is neither a partially
thickenable, nor a non-thickenable, torus in $N_n^+$. (This $T'$ is
not the same as in the previous paragraph.) We can extend the isotopy
of $S_+^xS_-^y(L)$ to an ambient contact isotopy and thus we may
assume that one fixed copy of $S_+^xS_-^y(L)$ sits on both a partially
(or non-) thickenable torus $T$ in $N_n^+$ and on a torus $T'$ that is
not a partially (or non-) thickenable torus in $N_n^+$. We may isotope
$T'$ near $S_+^xS_-^y(L)$ so that it agrees with $T$. Let $N$ be a
standard neighborhood of $S_+^xS_-^y(L)$ that intersects $T$ and $T'$
on a subset of $T\cap T'$. Let $A$ and $A'$ be the annuli in the
complement of $N$ given by $T$ and $T'$, respectively. We may further
assume that $\partial A=\partial A'$ are ruling curves on $\partial N$
and that all ruling curves on $\partial N$ are parallel to $\partial
A$. These annuli are properly topologically isotopic in the complement
of a neighborhood of $S_+^xS_-^y(K_+)$. (This follows from standard
results concerning incompressible surface in Seifert fibered spaces.)

We can use Isotopy Discretization as discussed in
Subsection~\ref{sssec:discritize} to find a sequence of annuli
$A_1,\ldots, A_m$ such that $A_1=A, A_m=A',$ each $A_i$ is a convex
annulus with boundary consisting of Legendrian ruling curves parallel
to $\partial A$ and for each $i=1, \ldots, m-1, A_i$ and $A_{i+1}$ are
disjoint and related by a bypass attachment. Notice that this gives us
a sequence of tori $T_1=T,\ldots, T_m=T'$ that are related by bypass
attachments in the complement of $S_+^xS_-^y(L)$. The torus $T_1$ is
partially (or non-) thickenable inside of $N_n^+$. We inductively show
that $T_i$ is also such a convex torus. Assume that we have shown that
$T_{i-1}$ is such a torus; then recall $T_i$ is obtained from
$T_{i-1}$ by attaching a bypass from the outside (that is from the
outside of the solid torus $T_{i-1}$ bounds) or from the inside. If we
attach the bypass to $T_{i-1}$ from the outside we get a new convex
torus that bounds a thickening of the solid torus that $T_{i-1}$
bounds, and so is also a partially (or non-) thickenable torus in
$N_n^+$. If we attach the bypass from the inside then as there is an
edge in the Farey tessellation between $e_n$ and $e_n^a$ (and the
dividing slope of $T_{i-1}$ is contained in the interval $[e_n,
e_n^a)$) we see that the dividing slope of $T_i$ is in $[e_n,e_n^a].$
But as in the previous paragraph the restriction on the rotation
number and Thurston-Bennequin invariant implies that the dividing
slope cannot be $e_n^a$. Thus the dividing slope of $T_i$ is in
$[e_n,e_n^a)$. In particular it bounds a partially (or non-)
thickenable solid torus in $N_n^+$. Thus $T_m=T'$ bounds a partially
(or non-) thickenable solid torus in $N_n^+$, which contradicts our
assumption on $T'$. From this we see that any convex solid torus on
which $S_+^xS_-^y(L)$ sits bounds a partially (or non-) thickenable
solid torus in $N_n^+$.
\end{proof}

\begin{proposition}\label{curvesonpt2}
Suppose $\K$ is a positive $(p,q)$-torus knot and $N_n^\pm$ is a solid
torus constructed above in Subsection~\ref{ss:nont}, for some $n>1$
with $\gcd{(n,pq-p-q)}=1$. Let $\frac sr\in(e^c_n,e_n)$ and $k= (\frac
sr \cdot e_n^a) - (\frac sr\cdot e_n)$. If $L$ is a ruling curve on
$\partial N_n^\pm$ with slope $\frac sr$, then:
\begin{enumerate}
\item For any $y\in\N \cup \{0\}$ and $x< k$ the convex torus
containing $S_\pm^xS_\mp^y(L)$ has dividing slope $e_n$ and is
contained in $N_n^\pm$.
\item The Legendrian knot $S^k_\pm S^y_\mp(L)$ sits on a convex torus
$T$ that bounds a solid torus that thickens to $N_1$.
\end{enumerate}
\end{proposition}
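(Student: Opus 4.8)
The plan is to mirror, step for step, the proof of Proposition~\ref{curvesonpt1}, the only differences being that here $\frac sr$ lies below $e_n$ rather than above it and that $L$ is a ruling curve on $\partial N_n^\pm$ rather than a Legendrian divide. As in that proof I would treat $N_n^+$ explicitly, the argument for $N_n^-$ being obtained by reversing all signs. Throughout I use that $N_n^+$ is universally tight, that there is an edge in the Farey tessellation between $e_n$ and $e_n^a$ (so a convex torus of slope $e_n^a$ with two dividing curves sits inside $N_n^+$, exactly as in Proposition~\ref{curvesonpt1}), and that by Theorem~\ref{thm:partially}(2)(e) the solid torus bounded by that slope-$e_n^a$ torus thickens to $N_1$.

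For part (2) I would fix such an inner torus $T$ of slope $e_n^a$ and let $L'$ be a ruling curve of slope $\frac sr$ on $T$. Taking a convex annulus $A$ cobounding $L$ and $L'$ in the thickened torus between $\partial N_n^+$ and $T$, the difference of the contact framings of $L'$ and $L$ is $|\frac sr\cdot e_n^a|-|\frac sr\cdot e_n|=k$, so $L'$ is obtained from $L$ by exactly $k$ stabilizations. To see these are all positive I would compute the relative Euler class of the (universally tight) basic-slice region between $\partial N_n^+$ and $T$ precisely as in Proposition~\ref{curvesonpt1}, concluding $L'=S_+^k(L)$. Any number of extra negative stabilizations is then realized by isotoping $L'$ to meet $\Gamma_T$ non-minimally, so $S_+^kS_-^y(L)$ sits on $T$ for every $y\geq 0$, and $T$ thickens to $N_1$; this is part (2).

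For part (1) I would run the inductive Isotopy Discretization argument of Proposition~\ref{curvesonpt1}, now with the target family being convex tori of dividing slope exactly $e_n$ bounding solid tori in $N_n^+$. The framing/Euler-class computation above shows that $S_+^xS_-^y(L)$ with $x<k$ cannot be realized on a torus of slope $e_n^a$, since that would force at least $k$ positive stabilizations. Assuming $S_+^xS_-^y(L)$ also lies on some convex torus $T''$ not of this form, I would move one fixed copy of the knot onto both a slope-$e_n$ torus and $T''$, pass to the annuli they cut out in the complement of a standard neighborhood of the knot, and discretize to a chain $T_1,\dots,T_m$ of convex tori related by single bypasses with $T_1$ of slope $e_n$ in $N_n^+$ and $T_m=T''$. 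I would then show by induction that every $T_i$ has slope $e_n$ and bounds a solid torus in $N_n^+$: by Theorem~\ref{bpaferry} a bypass can only move the dividing slope to a Farey neighbor of $e_n$, i.e.\ to $e_n^c$ or $e_n^a$; the neighbor $e_n^c$ is impossible because it would exhibit $N_n^+$ inside a larger solid torus of different slope, contradicting non-thickenability (Lemma~\ref{lem:nonthicken}), while the neighbor $e_n^a$ is excluded for $x<k$ by the stabilization count, so the slope must stay $e_n$. This contradicts the choice of $T''$ and proves part (1).

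I expect the inductive step in part (1) to be the main obstacle. The delicate points are exactly the two exclusions in the induction: ruling out descent toward $e_n^c$ (which rests on non-thickenability of $N_n^+$ together with the fact that $e_n^c$ and the fractions in $(e_n^c,e_n)$ have denominator smaller than $pq-p-q$, hence are not exceptional slopes), and ruling out the jump to $e_n^a$ for $x<k$ (which requires combining the twisting count $k=|\frac sr\cdot e_n^a|-|\frac sr\cdot e_n|$ with the sign bookkeeping from the relative Euler class, so that only positive stabilizations can cross the basic slice into slope $e_n^a$). Getting these signs right, as in Proposition~\ref{curvesonpt1}, is what makes the separation of positive from negative stabilizations --- and thus the sharp bound $x<k$ --- go through.
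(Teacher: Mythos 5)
Your strategy is the paper's, and your part (2) --- the slope-$e_n^a$ torus inside $N_n^\pm$, the twisting count giving exactly $k$ stabilizations across the intermediate thickened torus, and the relative Euler class forcing them all to be positive --- is exactly how the paper argues. The gap is in the inductive step of part (1). When a bypass is attached to $\Sigma_i$ from the inside and the slope changes, the new slope is a rational number in $(e_n,e_n^a]$ with an edge to $e_n$ in the Farey tessellation; but the set of such numbers is not $\{e_n^a\}$ --- it is an infinite sequence of mediants accumulating at $e_n$, of which $e_n^a$ is only the largest, and Theorem~\ref{bpaferry} can produce any of them depending on the slope of the attaching arc. Your two exclusions --- non-thickenability for slopes below $e_n$, the stabilization count for $e_n^a$ --- say nothing about a jump to one of these intermediate neighbors, so the induction as you state it does not close. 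The paper handles this case separately: if $\mathfrak{s}\in(e_n,e_n^a)$, it places a convex torus $\Sigma''$ of slope $e_n^a$ with two dividing curves inside the solid torus bounded by $\Sigma'$, takes Legendrian curves on $\Sigma'$ and $\Sigma''$ parallel to and disjoint from the knot, and uses Lemma~\ref{lem:basicrange} together with the Imbalance Principle to find bypasses disjoint from the knot that push $\Sigma'$ out to slope $e_n^a$ while still containing the knot, reducing to the already-excluded $e_n^a$ case.

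A second, related omission: your inductive hypothesis is only ``slope $e_n$ and contained in $N_n^\pm$,'' but the appeal to non-thickenability at each stage requires knowing that $\Sigma_i$ bounds a solid torus whose complement is standard, not merely that $\Sigma_1=\partial N_n^\pm$ itself is non-thickenable. The paper carries two further inductive conditions --- that $\Sigma_i$ lies in an $I$-invariant $T^2\times[0,1]$ with slope-$e_n$, two-dividing-curve boundary tori, carried to a standard $I$-invariant neighborhood of $\partial N_n^\pm$ by a contactomorphism matching complements --- and maintains them by factoring non-rotative outer layers, in the outside case by re-running the thickening procedure of Lemma~\ref{basecasethickening} for the Hopf link components in the complement of the new solid torus. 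Your sketch also never discusses the bypasses that change $\#\Gamma$ rather than the slope (the inductive bound $2\leq\#\Gamma\leq 2(x+y)+2$ in the paper), and without the extra conditions these moves could in principle destroy the standard position on which the non-thickenability argument depends.
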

\begin{proof}
We will concentrate on a Legendrian ruling curve $L$ on $\partial
N_n^+$ below, but analogous arguments also work for $N_n^-$. The proof
of the second point in the proposition follows exactly as in the proof
of Proposition~\ref{curvesonpt1} and in particular, $S^k_+ S^y_-(L)$
sits on a convex torus $T'$ inside of $N_n^+$ with dividing slope
$e_n^a$. Moreover, any Legendrian knot that is a stabilization of $L$
that sits on $T'$ will have at least $k$ positive stabilizations.

The first point follows the same outline as the proof of Claim 6.5 in
\cite{EtnyreHonda05}, but is augmented by what we know from
Proposition~\ref{prop:pt}.  More specifically, if $T'$ also contains
$L$ and is isotopic to $\partial N_n^+$ then standard properties of
incompressible surfaces in Seifert fibered spaces (recall that the
sub-annulus of $T'$ contained in the complement of a neighborhood of
$L$ is incompressible in the complement of $L$) imply that $T'$ must
be isotopic to $\partial N_n^+$ relative to $L$.  Therefore, it
suffices to show that the slope of the dividing set does not change
under any isotopy of $\partial N_n^+$ relative to $L$.  Although we
would like to say that the isotopy leaves the dividing set of
$\partial N_n^+$ invariant, this is not true, see
\cite{EtnyreHonda05}, though we will show the dividing slope does not
change.  If $T'$ is isotopic to $\partial N_n^+$ relative to $L$ then
the standard Isotopy Discretization used above implies that there is a
sequence of surfaces $\Sigma_1=\partial N_n^+, \ldots \Sigma_m=T'$
such that each $\Sigma_i$ is convex and obtained from the previous
$\Sigma_{i-1}$ by a bypass attachment.  We inductively assume the
following:
\begin{enumerate}
\item $\Sigma$ is a convex torus which contains $L$ and satisfies
$2\leq \#\Gamma_\Sigma \leq 2(x+y)+2$ and $\mbox{slope}
(\Gamma_\Sigma) = e_n$.
\item $\Sigma$ is contained in a $[0,1]$-invariant $T^2\times[0,1]$
with $\mbox{slope} (\Gamma_{T_0}) = \mbox{slope} (\Gamma_{T_1}) = e_n$
and $\#\Gamma_{T_0} = \#\Gamma_{T_1}=2$ and is parallel to $T^2\times
\{i\}$.
\item There is a contact diffeomorphism
$\phi:S^3\stackrel\sim\rightarrow S^3$ which takes $T^2\times [0,1]$
to a standard $I$-invariant neighborhood of $\partial N_n^+$ and
matches up their complements.
\end{enumerate}
Notice that if we prove all the $\Sigma_i$ satisfy these conditions
then $T'$ will satisfy the conclusions of the first point of the
proposition, thus completing our proof.

We assume that $\Sigma_i$ satisfies the inductive hypothesis
above. Using the terminology from the proof of
Proposition~\ref{curvesonpt1} we notice that if a bypass is attached
to $\Sigma_i$ from the outside then the dividing slope cannot change
or this would give a thickening of our non-thickenable solid torus.
If the bypass is attached from the inside, then let $\Sigma'$ be the
torus obtained after the bypass is attached.  By Lemma~\ref{lem:eaec}
we see that $\mathfrak{s}=\mbox{slope}(\Gamma_{\Sigma'})$ must lie in
$[e_n, e_n^a]$. Since the argument in the first paragraph of this
proof disallows $\mathfrak{s}=e_n^a$, we know that $\mathfrak{s}\in
[e_n, e_n^a)$. Suppose that $\mathfrak{s} > e_n$. Let $\Sigma''$ be a
convex torus of slope $e_n^a$ and $\#\Gamma=2$ in the interior of the
solid torus bounded by $\Sigma'$.  Take a Legendrian curve $L'$ on
$\Sigma'$ which is parallel to and disjoint from $L$, and intersects
$\Gamma_{\Sigma'}$ minimally. (The existence of such a curve is easily
established by noting that $L$ is obtained from a curve $L'$ that
minimally intersects $\Gamma_{\Sigma'}$ by a sequence of ``finger
moves" across $\Gamma_{\Sigma'}$. Inducting on the number of such
moves one may show that a parallel copy of $L'$ can be made disjoint
from these moves.)  Similarly, consider $L''$ on $\Sigma''$.  Using
Lemma~\ref{lem:basicrange} we see that $|\Gamma_{\Sigma'}\cap L'| >
|\Gamma_{\Sigma''}\cap L''|$.  Thus an annulus that is bounded by $L'$
and $L''$ will contain bypasses for $\Sigma'$ that are disjoint from
$L$.  After successive attachments of such bypasses, we eventually
obtain $\Sigma'''$ of slope $e_n^a$ containing $L$, a contradiction.
Therefore (observing the restriction on the number of components of
$\Gamma_{\Sigma_i}$ are dictated by $\tb(S^k_+ S^y_-(L))$) we see that
Condition (1) is preserved.

Suppose $\Sigma'$ is obtained from $\Sigma$ by a single bypass move.
Since $\mbox{slope} (\Gamma_{\Sigma'}) = \mbox{slope}
(\Gamma_{\Sigma})$, either the bypass attachment was trivial or
$\#\Gamma$ is either increased or decreased by 2.  Suppose first that
$\Sigma'\subset N$, where $N$ is the solid torus bounded by
$\Sigma$. For convenience, suppose $\Sigma=T_{0.5}$ inside
$T^2\times[0,1]$ satisfies Conditions (2) and (3) of the inductive
hypothesis. In particular $T_1$ is a torus outside of $N$ with two
dividing curves. The tori $T_1$ and $\Sigma'$ cobound a thickened
torus $T^2\times[0.5,1]$ with non-rotative contact structure.  Thus by
the classification of tight contact structures on solid tori, we can
factor a non-rotative outer layer which is the new $T^2\times[0,0.5]$.
It is easy to see that this new $T^2\times [0,1]$ satisfies
Conditions~(2) and~(3) of the inductive hypothesis.

Now suppose $\Sigma'\subset (S^3\setminus N)$.  If $N'$ is the solid
torus $\Sigma'$ bounds then we prove that there exists a non-rotative
outer layer $T^2\times[0.5,1]$ for $S^3\setminus N'$, where
$\#\Gamma_{T_1}=2$.  This follows from repeating the procedure in the
proof of Lemma~\ref{basecasethickening}, where Legendrian
representatives of $F_1$ and $F_2$ were thickened and then connected
by a vertical annulus. This time the same procedure is carried out
with the provision that the representatives of $F_1$ and $F_2$ lie in
$S^3\setminus N'$.  Once the maximal thickness for representatives of
$F_1$ and $F_2$ is obtained, after rounding we get a convex torus in
$S^3\setminus N'$ parallel to $\Sigma'$ but with $\#\Gamma=2$.
Therefore we obtain a non-rotative outer layer $T^2\times[0.5,1]$.
\end{proof}

\section{Simple cables}
\label{simple cables}
In this section we classify the simple cables of positive torus
knots. These classification results and their proofs are very similar
to those in \cite{EtnyreHonda05} and the first two of them follow
directly from \cite{Tosun??}. We include sketches here to demonstrate
the classification strategy discussed in
Subsection~\ref{sssec:strategy} and as a warm-up for the more
intricate results in the next section.

\begin{theorem}\label{poscableclass}
Suppose $\K$ is a positive $(p,q)$-torus knot.  If $r,s$ are
relatively prime integers with
\[
\frac{r}{s}=\frac{1}{{s}/{r}}> w(\K)=pq-p-q,
\] 
then $\K_{(r,s)}$ is Legendrian simple.  Moreover, there is a unique
maximal Thurston-Bennequin invariant representative $L$ of
$\K_{(r,s)}$ which has invariants
\[
\tb(L)= \overline{\tb}(\K_{(r,s)})= rs-\left|w(\K)\cdot \frac{r}{s}\right| = rs- (r- s(pq-p-q)),
\] 
and $\rot(L)=0.$ All other Legendrian representatives of $\K_{(r,s)}$
destabilize to $L.$
\end{theorem}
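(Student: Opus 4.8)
The plan is to run the three-step classification strategy of Subsection~\ref{sssec:strategy}, feeding in the solid torus classification of the preceding section as the main geometric input. The hypothesis $\frac rs>w(\K)=pq-p-q$ is equivalent to $0<\frac sr<e_1=\frac{1}{pq-p-q}$, so the cabling slope sits \emph{strictly below} the smallest dividing slope realized by any solid torus representing $\K$. In particular we are outside the range of Lemma~\ref{lem:maxtb}, so the maximal Thurston-Bennequin invariant will \emph{not} be $rs$; the content of the theorem is that it is instead $rs-(r-s(pq-p-q))$ and that the knot type is as rigid as possible.

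First, for Step~I, take any $L\in\mathcal{L}(\K_{(r,s)})$ and realize it as a ruling curve on a standard convex torus $\partial N$, where $N$ is a solid torus representing $\K$. By Theorem~\ref{thm:nonthick} and Corollary~\ref{cor:tori} the dividing slope of $\partial N$ is at least $e_1$, and since $\frac sr<e_1$ the knot $L$ is genuinely a ruling curve rather than a Legendrian divide. Lemma~\ref{tbcomp} then gives $\tb(L)=rs-\abs{\tfrac sr\cdot \mbox{slope}(\Gamma_{\partial N})}$, and because $\frac sr$ lies below every attainable dividing slope this intersection number is minimized exactly when $\mbox{slope}(\Gamma_{\partial N})=e_1$, i.e.\ when $N=N_1$; this yields $\overline{\tb}(\K_{(r,s)})=rs-\abs{\tfrac sr\cdot e_1}=rs-(r-s(pq-p-q))$. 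Uniqueness of the maximal representative follows because $N_1$ is the unique non-thickenable torus of slope $e_1$ (Theorem~\ref{thm:nonthick}) and a ruling curve of fixed slope on a standard convex torus is unique up to Legendrian isotopy. Finally $\rot(L)=0$ comes from Lemma~\ref{rcomp} together with Lemma~\ref{mustbeNk}, which shows that both the $\infty$-longitude and the meridional ruling curves on $\partial N_1$ have rotation number $0$; by linearity of the rotation function the $(r,s)$-ruling curve does too.

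Step~II is where I expect the real work to lie. For an arbitrary $L$ on $\partial N$, if $N$ thickens I would thicken it toward $N_1$, take a convex annulus of $\tfrac sr$-ruling curves cobounded by $L$ and a copy $L_0$ of the maximal representative, and use the bypasses on that annulus (located via the Imbalance Principle) to exhibit $L$ as a sequence of stabilizations of $L_0$. The obstacle is the possibility that $N$ is a non-thickenable torus $N_k^\pm$. The point to exploit is that the partial-thickening phenomena of Propositions~\ref{curvesonpt1} and~\ref{curvesonpt2} — and hence the non-destabilizable knots of Theorem~\ref{main} — occur only for cabling slopes inside an interval of influence $(e_k^c,e_k^a)$ with $k\in\mathcal{I}$, whereas here $\frac sr<e_1<e_k^c$ for every such $k$ (since $e_k$ is reduced with denominator $pq-p-q$, its lower Farey neighbor $e_k^c$ has strictly smaller denominator and positive numerator, forcing $e_k^c>e_1$; the neighbor $0$ occurs only for $k=1$). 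For the remaining $k\notin\mathcal{I}$, Theorem~\ref{thm:partially} and Remark~\ref{rem:pt} say $N_k^\pm$ may be thinned and thickened past, so again no locking occurs. Thus $\frac sr$ lies strictly below all intervals of influence, and the hard step is to turn this numerical fact into geometry: I would show, via the Imbalance Principle against a convex torus of slope $e_k^a$ inside $N_k^\pm$ combined with the intersection estimates of Lemma~\ref{lem:basicrange}, that a $\tfrac sr$-ruling curve on $\partial N_k^\pm$ is not locked there and can be moved onto a thickenable torus, reducing to the previous case. Ruling out every analogue of a non-destabilizable knot in this slope range is the crux of the argument.

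Finally, for Step~III, once every Legendrian knot destabilizes to the unique maximal representative $L$ with $\rot(L)=0$, any $L'\in\mathcal{L}(\K_{(r,s)})$ equals $S_+^aS_-^b(L)$ for some $a,b\ge 0$. Since positive and negative stabilizations commute and
\[
\rot(S_+^aS_-^b(L))=a-b,\qquad \tb(S_+^aS_-^b(L))=\tb(L)-a-b,
\]
the pair $(a,b)$ is determined by $(\tb,\rot)$. Hence any two Legendrian representatives with equal classical invariants are Legendrian isotopic, so $\K_{(r,s)}$ is Legendrian simple and all representatives destabilize to $L$, as claimed.
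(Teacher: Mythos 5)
Your proposal is correct and runs the paper's announced three-step strategy; your Steps I and III match the paper's argument almost exactly (your explicit $\rot=0$ computation via Lemmas~\ref{rcomp} and~\ref{mustbeNk} is in fact more detailed than the paper's, which leaves that point implicit). The genuine divergence is in Step II. The paper never has to confront the tori $N_k^\pm$ at all: given $K$ on a convex torus of dividing slope $a$ with $\frac{1}{n+1}<a<\frac1n$, it finds a convex torus of slope $\frac1n$ \emph{inside} the solid torus (such a torus exists whether or not the solid torus thickens), applies the Imbalance Principle using $|\frac sr\cdot a|\geq|\frac sr\cdot\frac1n|+|\frac sr\cdot\frac1{n+1}|>|\frac sr\cdot\frac1n|$, and is then left only with the case $a=\frac1n$, where the solid torus is a standard neighborhood of a Legendrian representative of $\K$ and Legendrian simplicity of torus knots supplies a thickening to slope $\frac{1}{n+1}$. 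You instead split according to whether $N$ thickens to $N_1$ or is trapped in some $N_k^\pm$, and attack the trapped case head-on with an inner torus of slope $e_k^a$ and Lemma~\ref{lem:basicrange}; this also works, since $\frac sr<e_1<e_k^c$ gives $|\frac sr\cdot e_k|=|\frac sr\cdot e_k^a|+|\frac sr\cdot e_k^c|>|\frac sr\cdot e_k^a|$, and the same estimate handles any dividing slope in $(e_k,e_k^a)$. Your route has the virtue of explaining conceptually why no analogue of the non-destabilizable knots of Theorem~\ref{main} can occur here --- the cabling slope lies strictly below every interval of influence --- while the paper's route is more economical, requiring no case split on thickenability and no appeal to the solid-torus classification beyond standard neighborhoods.

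Two points you should tighten. First, before any of this starts you must justify that $\partial N$ can be made convex without moving $L$: this is where the paper uses the Bennequin inequality for $\K_{(r,s)}$ together with $\frac rs>pq-p-q$ to see that the twisting of $\xi$ along $L$ relative to $\partial N$ is negative. It is a standard but necessary step, not a free assumption, and it is the second place the slope hypothesis enters. Second, your dichotomy ``thickens to $N_1$ or equals $N_k^\pm$'' omits the partially thickenable tori, namely those with dividing slope in $I_k=[e_k,e_k^a)$ that thicken to $N_k^\pm$ but no further (Theorem~\ref{thm:partially}); your $e_k^a$-imbalance argument covers them verbatim, but they must be named as a case, since they are exactly the tori for which ``thicken toward $N_1$'' fails.
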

\begin{proof}[Sketch of Proof]
We establish the theorem by (1) proving the above formula for
$\overline{\tb}(\K_{(r,s)}),$ (2) showing there is a unique Legendrian
knot $L$ with this as its Thurston-Bennequin invariant and (3) showing
that any other Legendrian knot in this knot type is a stabilization of
$L$.

To show (1) we let $K$ be any Legendrian knot in the knot type
$\K_{(r,s)}$. There is a solid torus $S$ realizing the knot type $\K$
that contains $K$ in $\partial S.$ We know there is a Seifert surface
for $\K_{(r,s)}$ with Euler characteristic $r+s(p+q-pq)-rs$ thus the
Bennequin inequality implies
\[
\tb(K)\leq rs-r+s(pq-p-q).
\]
From this we see that the twisting of the contact planes along $K$
measured with respect to $\partial S$ is less than or equal to
$-r+s(pq-p-q).$ Our condition that $r/s> pq-p-q$ implies that
$-r+s(pq-p-q)<0$, from which we can conclude that $\partial S$ can be
made convex without moving $K.$ Let $a$ be the slope of the dividing
curves on $\partial S.$ We know $a\geq w(\K)$ or negative.  Moreover,
$|a\cdot \frac rs| \geq |w(\K)\cdot \frac rs|$ with equality if and
only if $a=w(\K)$. Since we know that $\tb(K)$ is $rs$ plus
$tw(K,\partial S)$ and $tw(K,\partial S)$ is $-|a\cdot \frac rs|$
times the number of dividing curves, we clearly see that the maximal
possible Thurston-Bennequin invariant is realized on the the boundary
of a solid torus $S$ with convex boundary having two dividing curves
of slope $\frac 1{w(\K)}.$ If $S$ is the standard neighborhood of a
Legendrian knot in the knot type $\K$ with maximal Thurston-Bennequin
invariant then a ruling curve of slope $\frac sr$ will give a
Legendrian knot $L$ in the knot type $\K_{(r,s)}$ realizing this bound
as its Thurston-Bennequin invariant. Thus we have computed
$\overline{\tb}(\K_{(r,s)})$. Notice we have also shown that if $K$ is
any other Legendrian knot with $\tb(K)=\overline{\tb}(\K_{(r,s)})$
then $K$ will sit on the boundary of a standard neighborhood of a
maximal Thurston-Bennequin invariant Legendrian knot representing
$\K$. Since there is a unique such knot, standard arguments, like
those in \cite{EtnyreHonda01b, EtnyreHonda05} and discussed in
Subsection~\ref{sssec:isotopy}, show that $K$ is Legendrian isotopic
to $L.$ Thus we have shown there is a unique Legendrian representative
with maximal Thurston-Bennequin invariant.

We are left to check (3). To this end let $K$ be a Legendrian knot in
the knot type $\K_{(r,s)}$ with $\tb(K)< \overline{\tb}(\K_{(r,s)})$
and let $S$ be a solid torus in the knot type $\K$ such that $K$ sits
on $\partial S.$ As mentioned above we can assume that $\partial S$ is
convex. Let $a$ be the dividing slope for $\partial S$. If $a$ is
positive then there is some integer $n\geq 0$ such that $\frac 1{n+1}<
a\leq\frac 1n$. (A similar argument will hold for $a$ negative.) Thus
there is a convex torus $T$ inside $S$ with two dividing curves of
slope $\frac 1n$. As $\frac sr\cdot \frac 1n\leq \frac sr \cdot b$ for
any slope $b\in (\frac 1{n+1}, \frac 1n]$ with equality if and only if
$b=\frac 1n,$ we see that the $(r,s)$ ruling curve on $T$ has
Thurston-Bennequin invariant less than or equal to $\tb(K)$ and it is
strictly less than $\tb(K)$ unless $a=\frac 1n$. Taking an annulus
between $K$ and a ruling curve on $T$ we can find a bypass to show
that $K$ destabilizes unless $a=\frac 1n$. In this case we can assume
that $T$ is $\partial S$ and $S$ is a standard neighborhood of a
Legendrian knot in the knot type $\K.$ As $\K$ is Legendrian simple
and $n$ is not the maximal Thurston-Bennequin invariant we can thicken
$S$ to a solid torus $S'$ that is a standard neighborhood of a
Legendrian knot with $\tb=n+1.$ We can now use the ruling curve on
$\partial S'$ to show that $K$ destabilizes.
\end{proof}

\begin{theorem}\label{negcableclass}
Suppose $\K$ is a positive $(p,q)$-torus knot.  If $r,s$ are
relatively prime integers with $s>1$ and $\frac sr<0$, then
$\K_{(r,s)}$ is also Legendrian simple.  Moreover,
$\overline{tb}(\K_{(r,s)})=rs$ and the set of rotation numbers
realized by
$\{L\in \L(\K_{(r,s)})| tb(L)=\overline{tb}(\K_{(r,s)})\}$ is 
\[\{\pm (r + s(n+k)) \mbox{ } | \mbox{ } k=(pq-p-q+n), (pq-p-q+n)-2, \ldots, -(pq-p-q+n) \},\]
where $n$ is the integer that satisfies
$$-n-1 < {\frac rs}< -n.$$
All other Legendrian knots destabilize to one of these maximal
Thurston-Bennequin knots.
\end{theorem}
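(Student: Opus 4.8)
The plan is to follow the three-step strategy of Subsection~\ref{sssec:strategy}, running in parallel to the proof of Theorem~\ref{poscableclass} but now exploiting that for the slopes in question every solid torus representing $\K$ thickens, so no non-thickenable tori intervene. First I would dispose of the maximal Thurston--Bennequin invariant: since $s>1$ and $\frac sr<0$ we have $\frac rs<0<pq-p-q=w(\K)$, so Lemma~\ref{lem:maxtb} applies verbatim and gives $\overline{\tb}(\K_{(r,s)})=rs$. Next I would classify the representatives attaining this bound. Given $L$ with $\tb(L)=rs$, place $L$ on the boundary of a solid torus $S$ representing $\K$ and make $\partial S$ convex; by Lemma~\ref{tbcomp} the only way to achieve $\tb=rs$ is for $L$ to be a Legendrian divide on a $\partial S$ whose dividing slope is exactly $\frac sr$ (a ruling curve of slope $\frac sr$ on a torus of dividing slope $a\neq\frac sr$ has $\tb=rs-|\frac sr\cdot a|<rs$). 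The hypothesis $-n-1<\frac rs<-n$ is equivalent to $-\frac1n<\frac sr<-\frac1{n+1}$, so Corollary~\ref{cor:tori}(3) shows $S$ thickens to a standard neighborhood $V$ of a Legendrian $(p,q)$-torus knot $L_0$ with $\tb(L_0)=-n$. Because $\K$ is Legendrian simple, $L_0$ is determined up to Legendrian isotopy by its rotation number, and the rotation numbers available at $\tb=-n$ are exactly $\rho\in\{-(w+n),-(w+n)+2,\dots,w+n\}$, namely those obtained by $w+n$ stabilizations of the unique maximal ($\rho=0$) representative.

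I would then compute the rotation number of the cable divide. Using Lemma~\ref{rcomp}, or equivalently the rotation function $f_T$ of Subsection~\ref{sec:rotationfunction} together with the relative Euler class of the basic slice $V\setminus S$ between the slopes $\frac sr$ and $-\frac1n$ (exactly as in the computation in Proposition~\ref{curvesonpt1}), the rotation number of a slope-$\frac sr$ Legendrian divide is an affine function of $\rho$ and of the signs of the basic slices realizing $S$ inside $V$. Letting $S$ range over all solid tori of dividing slope $\frac sr$ in the knot type $\K$ --- equivalently letting $\rho$ run over the symmetric set above and the basic-slice signs vary --- produces precisely the set $\{\pm(r+s(n+k))\}$ with $k$ as stated.

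Legendrian simplicity at the top is then the statement that each realized $(\tb,\rot)$ pair determines $L$ uniquely: since $L_0$ is unique for its invariants and the tight contact structures on $S$, on $V$, and on the intervening basic slices are unique, the contactomorphism type of the complement of a standard neighborhood of $L$ is determined by $(\tb,\rot)$, and Theorem~\ref{thm:htpyequiv} together with Subsection~\ref{sssec:isotopy} upgrades a complement contactomorphism to a Legendrian isotopy. For the destabilization statement I would take $L$ with $\tb(L)<rs$ on a convex $\partial S$ of dividing slope $a$; when $a$ is negative (or reciprocally small) Corollary~\ref{cor:tori}(3) lets me thicken $S$, and choosing an annulus from $L$ to a $\frac sr$-ruling curve on a thickened torus whose ruling curve has strictly larger $\tb$, the Imbalance Principle furnishes a bypass and hence a destabilization of $L$. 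Iterating thickens $S$ until the ruling curve realizes $\tb=rs$, so every non-maximal $L$ destabilizes to a maximal representative; combined with the previous step this yields both the destabilization claim and Legendrian simplicity.

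I expect the main obstacle to be the rotation-number bookkeeping: pinning down the exact affine dependence on $\rho$ and the basic-slice signs so that the realized set is exactly $\{\pm(r+s(n+k))\}$, and verifying that distinct $(\rho,\text{sign})$ data give distinct invariants, so that simplicity genuinely holds at maximal $\tb$. This is the same computation carried out for negative torus knots in \cite{EtnyreHonda05}, transported here through the solid-torus classification of Section~\ref{Classification of solid tori}; the remaining steps are then essentially identical to those in the proof of Theorem~\ref{poscableclass}.
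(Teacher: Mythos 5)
Your proposal is correct and follows essentially the same route as the paper: the paper's proof is itself only a sketch that defers the maximal-$\tb$ classification and rotation-number computation to Theorem~3.6 of \cite{EtnyreHonda05} and isolates the destabilization of non-maximal representatives as the only new issue (since $\K$ is not uniformly thick), resolving it exactly as you do by producing a parallel convex torus of dividing slope $\frac sr$ and applying the Imbalance Principle. The only small imprecision is that in the destabilization step the auxiliary slope-$\frac sr$ torus may lie \emph{inside} $S$ rather than in a thickening (namely when $\partial S$ has negative dividing slope more negative than $\frac sr$), so one should allow it on either side of $\partial S$, as the paper's sketch states.
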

Notice that the restriction $s>1$ is reasonable as when $s=1$ we know
$\K_{(r,s)}=\K.$
\begin{proof}[Sketch of Proof]
This theorem is essentially Theorem~3.6 from \cite{EtnyreHonda05}, the
only difference being that $\K$ is not uniformly thick. As we saw in
the previous proof the only real difference in this case where $\K$ is
not uniformly thick is that we have to be careful to argue that
Legendrian knots with non-maximal Thurston-Bennequin invariants
destabilize. But in this case we see that if $K$ is any Legendrian
knot in the knot type $\K_{(p,q)}$ then it sits on a convex torus $T$
bounding a solid torus $S$ in the knot type $\K$ and there is either a
torus $T'$ parallel to $T$ inside $S$ or outside $S$ such that $T'$ is
convex with dividing slope $\frac sr.$ We can use $T'$ to find a
destabilization of $K.$
\end{proof}

\begin{theorem}\label{intermediatecableclass}
  Suppose $\K$ is a positive $(p,q)$-torus knot with
  $(p,q)\not=(2,3)$.  If $r,s$ are relatively prime positive integers
  with $0 <\frac rs<w(\K)=pq-p-q$ but $\frac sr\not\in J$, where $J$
  is as in Theorem~\ref{main}, then $\K_{(r,s)}$ is also Legendrian
  simple.  Moreover, $\overline{tb}(\K_{(r,s)})=rs$ and the set of
  rotation numbers realized by $\{L\in \L(\K_{(r,s)})|
  tb(L)=\overline{tb}(\K_{(r,s)})\}$ is
\[\{\pm (r + s(-n+k)) \mbox{ } | \mbox{ } k=(pq-p-q-n), (pq-p-q-n)-2, \ldots, -(pq-p-q-n) \},\]
where $n$ is the integer that satisfies
$$n-1 < {\frac rs}< n.$$
All other Legendrian knots destabilize to one of these maximal
Thurston-Bennequin knots. 
\end{theorem}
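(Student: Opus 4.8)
The plan is to run the three–step strategy of Subsection~\ref{sssec:strategy}, just as in the sketches of Theorems~\ref{poscableclass} and~\ref{negcableclass}; the essential new input is that the non-uniform thickness of $\K$ forces us to invoke the solid–torus classification of Corollary~\ref{cor:tori} in place of a uniform–thickness hypothesis. Since $0<\frac rs<w(\K)=pq-p-q$ by hypothesis, Lemma~\ref{lem:maxtb} immediately gives $\overline{\tb}(\K_{(r,s)})=rs$, completing the computation of the maximal Thurston–Bennequin invariant.

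For Step~I, I would observe via Lemma~\ref{tbcomp} that any $L\in\L(\K_{(r,s)})$ with $\tb(L)=rs$ must sit as a Legendrian divide on a convex torus $\partial S$ with two dividing curves of slope $\frac sr$, where $S$ represents $\K$ (a ruling curve would have strictly smaller $\tb$, and a divide on a torus with more dividing curves reduces to this after decreasing the number of dividing curves). The hypothesis $\frac sr\notin J$ means $\frac sr$ lies in no interval of influence, so in particular $\frac sr\notin I_k$ and $\frac sr\notin(e_k^c,e_k)$ for every $k\in\mathcal I$; writing $\frac1n<\frac sr<\frac1{n-1}$, Corollary~\ref{cor:tori}(2)(a) then supplies exactly $2(pq-p-q-n+1)$ such solid tori, each thickening to a standard neighborhood of a Legendrian $(p,q)$-torus knot with $\tb=n$. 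Since positive torus knots are Legendrian simple, these neighborhoods are classified, and the contactomorphism discussion of Subsection~\ref{sssec:isotopy} promotes the classification of the solid tori to a Legendrian classification of the divides they carry. I would compute the rotation numbers using Lemma~\ref{rcomp} together with a relative Euler class computation for thinning slope $\frac1n$ down to $\frac sr$, exactly as in Proposition~\ref{curvesonpt1}; combining the rotation numbers $-(pq-p-q-n),\dots,pq-p-q-n$ of the underlying $\tb=n$ torus knots with this correction reproduces the set $\{\pm(r+s(-n+k))\}$ and exhibits a bijection between it and the $2(pq-p-q-n+1)$ solid tori, so each realized pair $(\tb,\rot)$ comes from a unique maximal representative.

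Step~II is the destabilization argument, which I expect to be the main obstacle and where the hypothesis $\frac sr\notin J$ really does its work. Given $K$ with $\tb(K)<rs$, place $K$ as a ruling curve of slope $\frac sr$ on a convex torus $\partial S$ of dividing slope $a\neq\frac sr$, with $S$ representing $\K$. As in the sketches above, I would destabilize $K$ by passing to a parallel convex torus $T'$, inside or outside $S$, that carries a $\frac sr$-ruling curve of larger Thurston–Bennequin invariant, and running an annulus from $K$ to this ruling curve whose imbalance produces the destabilizing bypass. The delicate point is that the needed $T'$ must be reachable: precisely because $\frac sr\notin J$, the slope $\frac sr$ avoids the exceptional slopes and their intervals of influence, so by Theorem~\ref{thm:nonthick} and Theorem~\ref{thm:partially} there is no non-thickenable or partially thickenable torus blocking the passage from $S$ to a torus of dividing slope $\frac sr$. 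This is exactly the feature that fails when $\frac sr\in J$, and writing out carefully why the obstruction is absent here is the heart of the argument.

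Finally, Step~III is automatic: every knot destabilizes to one of the finitely many maximal representatives of Step~I, these are distinguished by their rotation numbers, and since positive and negative stabilizations commute, $(\tb,\rot)$ is a complete invariant. Hence $\K_{(r,s)}$ is Legendrian simple with the stated classification. The only bookkeeping I would double-check is that the $2(pq-p-q-n+1)$ listed rotation numbers are genuinely distinct, so that the bijection with the solid tori of Corollary~\ref{cor:tori} is exact and the representative realizing each $(\tb,\rot)$ is unique.
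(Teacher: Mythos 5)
Your overall three-step outline matches the paper's, and the computation of $\overline{\tb}(\K_{(r,s)})=rs$ via Lemma~\ref{lem:maxtb} is fine, but there are two genuine gaps where your proposed arguments would not go through. First, in Step~I you reduce everything to Corollary~\ref{cor:tori}(2)(a), which counts only solid tori with \emph{two} dividing curves, and you dismiss the remaining case with the parenthetical that a divide on a torus with more dividing curves ``reduces to this after decreasing the number of dividing curves.'' The hypothesis $\frac sr\not\in J$ does \emph{not} keep $\frac sr$ away from the exceptional slopes: since $J$ only excises the intervals of influence of $e_n$ with $\gcd(n,pq-p-q)=1$, the slope $\frac sr$ can equal $e_m$ with $\gcd(m,pq-p-q)>1$ (e.g.\ the $(1,2)$-cable of the $(2,5)$-torus knot, where $\frac sr=2=e_6$), and then a maximal-$\tb$ representative can sit as a Legendrian divide on the non-thickenable torus $\partial N_m^\pm$, which has $2\gcd(m,pq-p-q)>2$ dividing curves and cannot be thickened at all. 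Reducing the number of dividing curves while keeping $L$ on the torus requires producing a bypass \emph{disjoint from} $L$; the paper does this by taking a vertical annulus to an inner torus of slope $e_m$ with two dividing curves and observing that, because $pq-p-q$ is odd, $\gcd(m,pq-p-q)\geq 3$, so the annulus carries at least two non-adjacent bypasses, one of which misses $L$. That parity argument is the substantive content of Step~I here and is entirely absent from your proposal.

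Second, your Step~II locates the obstruction in the wrong place. Whether the torus $S$ carrying $K$ can be thickened is governed by the dividing slope $a$ of $\partial S$, not by $\frac sr$: even with $\frac sr\not\in J$, the slope $a$ may lie in $I_m=[e_m,e_m^a)$ for some $m\in\mathcal{I}$, in which case $S$ may be only partially thickenable (Theorem~\ref{thm:partially}) and can never be thickened to a torus of dividing slope $\frac sr$. So your claim that ``there is no non-thickenable or partially thickenable torus blocking the passage from $S$ to a torus of dividing slope $\frac sr$'' is false, and the destabilization cannot be obtained the way you describe. The correct mechanism does not pass to slope $\frac sr$ at all: one takes a convex torus of slope $e_m^a$ \emph{inside} $S$ and uses Lemma~\ref{lem:basicrange} (which gives $\left|a\cdot\frac sr\right|>\left|e_m^a\cdot\frac sr\right|$ strictly, precisely because $\frac sr\not\in J_m$ and $a\neq e_m^a$) together with the Imbalance Principle to produce a destabilizing bypass for $K$; the role of $\frac sr\not\in J$ is to guarantee this strict inequality, not to guarantee unobstructed thickening. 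Your Step~III is fine once Steps~I and~II are repaired, though you should also note, as the paper does via Remark~\ref{rem:pt}, that tori with dividing slope in $[e_m,e_m^a)$ for $m\not\in\mathcal{I}$ always thicken to $N_1$, which is what lets the argument of Etnyre--Honda's Theorem~3.6 apply in the remaining cases.
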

\begin{proof}[Sketch of Proof]
Establishing the classification of maximal Thurston-Bennequin
Legendrian knots in this knot type can be done exactly as in
Theorem~3.6 from \cite{EtnyreHonda05}, see \cite{Tosun??} for details,
except when $\frac sr \in [e_n, e_n^a)$ for some $n$ not relatively
prime to $pq-p-q$. If $L$ is a Legendrian knot in the knot type
$\K_{(r,s)}$ for such an $\frac sr\not= e_n$ and $L$ has maximal
Thurston-Bennequin invariant, then, as discussed above, $L$ will sit
as a Legendrian divide on a convex torus $T$ in the knot type
$\K$. Such a torus bounds a solid torus $S$ that can be thickened to a
solid torus with convex boundary having two dividing curves of slope
$e_n$. As mentioned in Corollary~\ref{cor:tori}, see also
Remark~\ref{rem:pt}, we see that this torus further thickens to
$N_1$. Thus the reasoning in Theorem~3.6 in \cite{EtnyreHonda05}
applies. If $L$ is a Legendrian knot in the knot type $\K_{(r,s)}$
with $\frac sr=e_n$, then it again sits as a Legendrian divide on a
convex torus $T$. If $T$ is not $\partial N_n^\pm$ then according to
Corollary~\ref{cor:tori} it will bound a solid torus that thickens to
$N_1$. If $T=\partial N_n^\pm$ then since $e_n\not\in J$, by
assumption, we know $\gcd(n, pq-p-q)\not=1$ and hence $T$ has more
than two dividing curves. Below we show that we can find a torus $T'$,
inside the solid torus $T$ bounds, with two less dividing curves on
which $L$ also sits. Of course this new torus will thicken to $N_1$
and hence we are done as above. To find $T'$ notice that according to
the classification of contact structures on thickened tori we can find
a convex torus $T_0$ inside of $S$, the solid torus $T$ bounds, with
two dividing curves of slope $e_n$. Let $B=T_0\times [0,1]$ be the
thickened torus that $T$ and $T_0$ cobound. Take a simple closed curve
$\gamma$ on $T_0$ that intersects a curve of slope $e_n$ one time. Let
$A=\gamma\times [0,1]$ be an annulus in $B$ running from $\gamma$ on
$T_0$ to $T$. We can arrange that $\partial A$ consists of ruling
curves on $T_0$ and $T$. Now if $\gcd(n,pq-p-q)>2$ then there will be
at least 2 non-adjacent bypasses on $A$ for $T$. Thus one of them will
be disjoint from $L$. Pushing $T$ across this bypass will result in
the torus $T'$ with fewer dividing curves than $T$ and on which $L$
sits. Since we are considering $(p,q)$-torus knots notice that
$pq-p-q$ is odd and thus $\gcd(n,pq-p-q)$ cannot be even, thus the
condition that $\gcd(n,pq-p-q)>2$ is satisfied.

We are left to show that any Legendrian knot with non-maximal
Thurston-Bennequin invariant destabilizes. Let $K$ be a Legendrian
knot in the knot type $\K_{(r,s)}$ with $\tb(K)< rs.$ We know that $K$
can be put on a convex torus $T$ that bounds a solid torus $S$
representing the knot type $\K.$ Let $a$ be the dividing slope of $T.$
If $a>\frac sr$ then there is a torus $T'$ parallel to $T$ inside $S$
with dividing slope $\frac sr.$ We can use an annulus that cobounds
$K$ and a Legendrian divide on $T'$ to show that $K$ destabilizes. Now
suppose that $a<\frac sr.$ If $a\in I_n = [e_n,e_n^a)$ for some $n$
then from Lemma~\ref{lem:basicrange} we see that $|a\cdot \frac
sr|\geq |e_n^a\cdot \frac sr|$ with equality if and only if
$a=e_n^a$. Since $a\not=e_n^a$ we can let $T'$ be a torus inside $S$
that is parallel to $T$ and has dividing slope $e_n^a$ and use an
annulus between $K$ and a ruling curve on $T'$ to show $K$
destabilizes. If $a$ is not in $I_n=[e_n,e_n^a)$ for any $n$ then from
Theorem~\ref{thm:partially} we know there is a torus $T'$ outside $S$
that is parallel to $T$ and has dividing slope $\frac 1{pq-p-q}$. Thus
between $T$ and $T'$ we have a convex torus $T''$ with dividing slope
$\frac sr.$ As above we can use this torus to show $K$ destabilizes.
\end{proof}

\section{Cables of positive torus knots (other than the trefoil)}
\label{non-simple cables non-trefoil}
Recall if $\K$ is the knot type of the positive $(p,q)$-torus knot and
$(p,q)\not=(2,3)$ then we set
\[
e_k=\frac k{pq-p-q},
\]
$J_k=(e_k^c, e_k^a)$, $\mathcal{I}=\{n\in \Z: n>1 \text{ and }
\gcd(n,pq-p-q)=1 \}$ and $J=\cup_{n\in \mathcal{I}} J_n.$ Much of
Theorem~\ref{main} was proven in the previous section. To complete the
proof we need to classify Legendrian knots in the $(r,s)$-cable of the
$(p,q)$-torus knot type $\K$ when $\frac sr\in J_n$ for some $n\in
\mathcal{I}.$ In the next two propositions we do this first for the
case when $\frac sr \in [e_n,e_n^a)$, and then for the case when
$\frac sr \in (e_n^c,e_n)$.
\begin{proposition}\label{prop:classify1}
With the notation above, suppose $\frac sr\in [e_n, e_n^a)$ for some
$n\in \mathcal{I}.$ Then there is some $k\geq 0$ such that $\frac
1{k-1}> \frac sr >\frac 1{k}$ and $\mathcal{L}(\K_{(r,s)})$ admits the
following classification.
\begin{enumerate}
\item The maximal Thurston-Bennequin invariant is
$\overline{\tb}(\K_{(r,s)})=rs$.
\item For each integer $i$ in the set 
\[\{\pm (r + s(-k+l)) \mbox{ } | \mbox{ } l=(pq-p-q-k), (pq-p-q-k)-2, \ldots, -(pq-p-q-k) \},\]
there is a Legendrian $L_i\in \mathcal{L}(\K_{(r,s)})$ with 
\[
\tb(L_i)=rs\,\, \text{ and } \rot(L_i)=i.
\]
\item There are two Legendrian knots $K_\pm\in
\mathcal{L}(\K_{(r,s)})$ with
\[
\tb(K_\pm)=rs \,\, \text{ and }  \rot(K_\pm)=\pm(s(pq-p-q)-r).
\]
\item All Legendrian knots in $\mathcal{L}(\K_{(r,s)})$ destabilize to
one of the $L_i$ or $K_\pm.$
\item\label{item5} Let $c=(\frac sr \cdot e_n^a)-1$.  For any $y\in\N
\cup \{0\}$ and $x\leq c$ the Legendrian $S_\pm^xS_\mp^y(K_\pm)$ is
not isotopic to a stabilization of any of the other maximum
Thurston-Bennequin invariant Legendrian knots in
$\mathcal{L}(\K_{(r,s)})$.
\item Any two stabilizations of maximal Thurston-Bennequin invariant
Legendrian knots in $\mathcal{L}(\K_{(r,s)})$, except those mentioned
in item~(\ref{item5}), are Legendrian isotopic if they have the same
$\tb$ and $\rot$.
\end{enumerate}
\end{proposition}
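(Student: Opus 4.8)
The plan is to run the three-step classification strategy of Subsection~\ref{sssec:strategy}, feeding it the solid-torus classification of Section~\ref{Classification of solid tori}. First I would record the purely arithmetic claim that $\frac sr\in[e_n,e_n^a)$ forces $\frac 1{k-1}>\frac sr>\frac 1k$ for a unique $k\ge 0$: this is a statement about where the Farey neighbor $e_n^a$ of $e_n=\frac n{pq-p-q}$ lies among the unit fractions, and it drops out of the continued-fraction description in Lemma~\ref{lem:eaec}. In particular $\frac rs\le \frac{pq-p-q}{n}<pq-p-q=w(\K)$, so Lemma~\ref{lem:maxtb} gives $\overline{\tb}(\K_{(r,s)})=rs$, which is item~(1).

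For Step~I (items~(2) and~(3)), any $L\in\mathcal{L}(\K_{(r,s)})$ with $\tb(L)=rs$ must, by Lemma~\ref{tbcomp}, sit as a Legendrian divide on a convex torus $T$ of dividing slope $\frac sr$ bounding a solid torus $S$ representing $\K$. By Corollary~\ref{cor:tori} and Theorem~\ref{thm:partially}, $S$ either thickens to $N_1$ or thickens only to one of the partially thickenable tori $N_n^\pm$. In the first case the realized rotation numbers are computed exactly as in the simple case (Theorem~\ref{intermediatecableclass}), using Lemma~\ref{rcomp} and the rotation function $f_T$, producing the set of $L_i$ in item~(2); uniqueness of each $L_i$ for a given rotation number comes from the uniqueness of the contact structure on the relevant pieces together with the contactomorphism-implies-isotopy principle of Subsection~\ref{sssec:isotopy}. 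In the second case $S$ is a slope-$\frac sr$ torus trapped in $N_n^+$ or $N_n^-$; since there are exactly two such (Theorem~\ref{thm:nonthick}) we get exactly the two knots $K_\pm$. Evaluating $f_{\partial N_n^\pm}$ (Lemma~\ref{mustbeNk}) and transporting the value across the intervening thickened region by its relative Euler class yields $\rot(K_\pm)=\pm(s(pq-p-q)-r)$; as a check this reduces to the direct value $\pm r(n-1)$ when $\frac sr=e_n$. This establishes items~(2) and~(3).

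Step~II (item~(4)) is the destabilization argument of Theorem~\ref{intermediatecableclass}: a Legendrian $K$ with $\tb(K)<rs$ lies on a convex torus bounding a solid torus $S$ of dividing slope $a$; if $a>\frac sr$ one destabilizes across a parallel slope-$\frac sr$ torus inside $S$, if $a<\frac sr$ one uses Lemma~\ref{lem:basicrange} and the Imbalance Principle to produce a bypass, and otherwise one first thickens to $N_1$. The one genuinely new point is that a knot sitting on a slope-$\frac sr$ torus trapped in $N_n^\pm$ cannot be pushed out to $N_1$, so it destabilizes to $K_\pm$ rather than to some $L_i$; this is exactly the content of Proposition~\ref{curvesonpt1}.

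The heart of the matter, and the step I expect to be hardest, is Step~III, in particular the non-coincidence statement~(5). Here I would invoke Proposition~\ref{curvesonpt1}(1): for $x\le c=(\frac sr\cdot e_n^a)-1<k=|\frac sr\cdot e_n^a|$, every convex torus carrying $S_\pm^xS_\mp^y(K_\pm)$ bounds a partially- or non-thickenable torus inside $N_n^\pm$ of the \emph{matching} sign. By contrast every stabilization of an $L_i$ sits on some torus that thickens to $N_1$, while every stabilization of $K_\mp$ is trapped in the opposite torus $N_n^\mp$. Since ``sits on a torus of type $N_1$ / $N_n^+$ / $N_n^-$'' is an isotopy-invariant trichotomy, and these three contact-isotopy classes are mutually distinct ($N_n^+$ and $N_n^-$ differ by $-\mathrm{Id}$, equivalently by the signs of their basic-slice decomposition, and are distinct from $N_1$ by Theorem~\ref{thm:nonthick}), no isotopy between $S_\pm^xS_\mp^y(K_\pm)$ and such a stabilization can exist. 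The obstruction is thus precisely the existence of the partially thickenable tori, and the bound $x\le c$ is exactly the number of stabilizations needed before the knot can escape $N_n^\pm$. Finally, for item~(6): once a knot is stabilized past this range it does sit on a torus thickening to $N_1$, and inside a standard neighborhood of the maximal-$\tb$ torus knot two Legendrian knots with equal $\tb$ and $\rot$ are Legendrian isotopic, by the uniqueness of that torus knot and the standard convex-surface arguments of \cite{EtnyreHonda05} and Subsection~\ref{sssec:isotopy}; this disposes of all remaining stabilizations.
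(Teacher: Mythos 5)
Your proposal is correct and follows essentially the same route as the paper: the three-step strategy with the construction of the $L_i$ inside $N_1$ and of $K_\pm$ as Legendrian divides on slope-$\frac sr$ tori inside $N_n^\pm$, the relative Euler class computation for $\rot(K_\pm)$, the case analysis via Lemma~\ref{lem:basicrange} and Theorem~\ref{thm:partially} for destabilization, and Proposition~\ref{curvesonpt1} as the engine behind items~(5) and~(6). The only quibble is terminological --- the $N_n^\pm$ themselves are non-thickenable rather than ``partially thickenable'' (that adjective is reserved for the slope-$\frac sr$ tori trapped inside them) --- but this does not affect the argument.
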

\begin{proof} We follow the standard approach to classifying
Legendrian knots in a given knot type outlined in
Section~\ref{sec:pre}.

\medskip

\noindent
{\bf Step I ---} {\em Identify the maximal Thurston-Bennequin
invariant of the knot type and classify Legendrian knots realizing
this:} The computation of the maximal Thurston-Bennequin invariant is
done in Lemma~\ref{lem:maxtb}.
 
\smallskip

\noindent
{\em $\bullet$ Construction of maximal Thurston-Bennequin invariant
knots in $\mathcal{L}(\K_{(r,s)})$:} Let $N_m^\pm$ be the
non-thickenable solid tori representing $\K$ that were constructed in
Subsection~\ref{ss:nont}. Recall $N_1$ is a standard neighborhood of
the maximal Thurston-Bennequin invariant Legendrian $(p,q)$-torus knot
$L$ (and that there is only one $N_1$ so the $\pm$ is ignored
here). Inside $N_1$ there are solid tori corresponding to stabilizing
$L$, $(pq-q-p)-k$ times. The range of the rotation numbers for the
Legendrian $(p,q)$-torus knots represented by these tori is
$S=\{(pq-p-q-k), (pq-p-q-k)-2, \ldots, -(pq-p-q-k)\}$. Denote these
tori $S_l$ for $l\in S.$ Inside each $S_l$ there are two tori
$S_l^\pm$ that come from positively or negatively stabilizing the
Legendrian knot corresponding to $S_l$. In the thickened torus
$S_l-S_l^\pm$ there is a unique convex torus $T_l^\pm$ with dividing
slope $\frac sr$. Let $i=sl \pm m$ where $m=r-sk>0$ is the
remainder. Denote by $L_i$ a Legendrian divide on $T_l^\pm$. We
clearly have that $\tb(L_i)=rs$ and the computation in the proof of
Lemma~3.8 in \cite{EtnyreHonda05} (or similar to the one given below
for $K_\pm$) gives that $\rot(L_i)=i$.

Now consider the two tori $N_n^\pm$. Inside each one there is a convex
torus $T^\pm$ with dividing slope $\frac sr.$ Let $K_\pm$ be a
Legendrian divide on $T^\pm$. Again it is clear that $\tb(K_\pm)=rs.$
Recall that from Lemma~\ref{rcomp} we know that
\[
\rot(K_\pm)=r\rot(\partial D) + s\rot(\partial \Sigma)
\]
where $D$ is a meridional disk for $T^\pm$ with Legendrian boundary
and $\Sigma$ is a surface, outside the solid torus $T^\pm$ that
bounds, with Legendrian boundary on $T^\pm.$ If $D'$ and $\Sigma'$ are
the corresponding surfaces for $\partial N_n^\pm$ then we know from
Lemma~\ref{candidates} that $\rot (\partial D')=\pm(n-1)$ and
$\rot(\partial \Sigma')=0.$ Thus the rotation number of an
$(r,s)$-ruling curve on $\partial N_n^\pm$ is $\pm r(n-1)$. To compute
the rotation number for the Legendrian divide on $T^\pm$ we use the
classification of tight contact structures on thickened tori, as given
in \cite{Honda00a}, and the fact that $N_n^\pm$ is universally
tight. In particular, we can compute the relative Euler class $e$ of
the thickened torus cobounded by $N_n^\pm$ and $T^\pm$:
\[
P.D.(e)=\pm( (r,s) -( pq-p-q, n)) \in H_1(T^2\times I;\Z),
\]
where $P.D.$ stands for the Poincar\'e Dual and we are using the basis
for $H_1$ given by the meridian and longitude.  We can use this to
compute the difference between the rotation number of the $(r,s)$
curve on $\partial N_n^\pm$ and on $T^\pm$ which is
$\pm(r(s-n)-s(r-(pq-p-q))$. Thus we have that $\rot(K_\pm)=\pm
(s(pq-p-q)-r)$.

\smallskip

\noindent
{\em $\bullet$ Classification of maximal Thurston-Bennequin invariant
knots in $\mathcal{L}(\K_{(r,s)})$:} If $K\in \mathcal{L}(K_{(r,s)})$
with $\tb(K)=rs$ then $K$ sits on a convex torus with dividing slope
$\frac sr$. Theorem~\ref{thm:partially} and Corollary~\ref{cor:tori}
say that such a torus is one of the ones considered when constructing
$K_\pm$ and $L_i$. Thus, a by now standard argument, see
\cite{EtnyreHonda01b} and Subsection~\ref{sssec:isotopy} above, says
the torus must be isotopic to one of the ones used in those
constructions from which we can also conclude that $K$ is isotopic to
one of $K_\pm$ or $L_i$.

\medskip

\noindent
{\bf Step II ---} {\em Prove all non-maximal Thurston-Bennequin
invariant knots in $\mathcal{L}(\K_{(r,s)})$ destabilize:} Let $K$ be
any Legendrian knot in $\mathcal{L}(\K_{(r,s)})$ with
Thurston-Bennequin invariant less than $rs$. Let $T$ be a torus
bounding a solid torus $S$ in the knot type $\K$ on which $K$
sits. Since $\tb< rs$ we know that we can perturb $T$ relative to $K$
so that it is convex. If the dividing slope $t$ of $T$ is equal to
$\frac sr$ then $K$ intersects the dividing curves inefficiently and
we can find a bypass for $K$ on $T$. Thus we can destabilize $K$. If
$t\not=\frac sr$ then we have three cases to consider. Case one is
when $t\not\in [e_m, e_m^a)$ for any $m$. In this case
Theorem~\ref{thm:partially} tells us that $S$ can be thickened to a
standard neighborhood of a maximal Thurston-Bennequin knot in
$\mathcal{L}(\K)$. Thus there is a convex torus $T'$ parallel to $T$
(either inside $S$ or outside $S$ depending on $t$) with dividing
slope $\frac sr.$ We can use an annulus between $T$ and $T'$ with
boundary on $K$ and a Legendrian divide on $T'$ to find a bypass for
$K $ and hence $K$ destabilizes. Case two is when $t\in [e_m,e_m^a)$
for $m\not=n.$ Lemma~\ref{lem:basicrange} says that $|t\cdot \frac
sr|$ is strictly greater than $|\frac sr \cdot e_m^a|$ and $|\frac sr
\cdot e_m^c|$ (since $t$ is on the interior of $[e_m^c, e_m^a]$). Thus
there is a torus $T'$ in $S$ with dividing slope $e_m^a$. Using an
annulus between $K$ on $T$ and a $\frac sr$ ruling curve on $T'$ we
find a bypass for $K$ and hence a destabilization. Finally in case
three we consider $t\in [e_n, e_n^a)$. In this case we can find a
torus $T'$ as in case one to destabilize $K$.
 
\medskip

\noindent
{\bf Step III ---} {\em Determine which stabilizations of the $K_\pm$
and $L_i$ are Legendrian isotopic:} We first notice that exactly as in
Lemma 4.12 of \cite{EtnyreHonda01b} and Theorem~3.6 in
\cite{EtnyreHonda05} we see that stabilizations of the $L_i$ are
Legendrian isotopic whenever they have the same Thurston-Bennequin
invariants and rotation numbers. (Recall this is easily established by
showing that when two of the $L_i$ are stabilized a minimal number of
times to have the same invariants they can both be realized as a
ruling curve on the boundary of a standard neighborhood of the same
Legendrian knot in the knot type $\K$.)

We will now concentrate on $K_+$ below, but analogous arguments also
work for $K_-$. From Proposition~\ref{curvesonpt1} we see that
$S_+^xS_-^y(K_+)$ sits on a torus $T$ that bounds a solid torus that
thickens to $N_1$ if $x>c$. In particular $T$ sits inside a solid
torus $S$ used in the construction of one of the $L_i$ (that is $L_i$
is a Legendrian dividing curve on $\partial S$). Thus we may use an
annulus that $S_+^xS_-^y(K_+)$ and $L_i$ cobound to see that
$S_+^xS_-^y(K_+)$ destabilizes to $L_i$.

We are left to see that $S_+^xS_-^y(K_+)$ is not isotopic to any
stabilization of the other maximal Thurston-Bennequin invariant knots
if $x\leq c.$ But this is clear from part one of
Proposition~\ref{curvesonpt1} since any stabilization of one of the
$L_i$ or $K_-$ sits on a convex torus that does not bound a partially
(or non-) thickenable torus contained in $N_n^+$.
\end{proof}

\begin{proposition}\label{prop:classify2}
With the notation above, suppose $\frac sr\in (e_n^c, e_n)$ for some
$n\in \mathcal{I}.$ Then there is some $k\geq 0$ such that $\frac
1{k-1}> \frac sr >\frac 1{k}$ and $\mathcal{L}(\K_{(r,s)})$ admits the
following classification.
\begin{enumerate}
\item The maximal Thurston-Bennequin invariant is
$\overline{\tb}(\K_{(r,s)})=rs$.
\item For each integer $i$ in the set 
\[\{\pm (r + s(-k+l)) \mbox{ } | \mbox{ } l=(pq-p-q-k), (pq-p-q-k)-2, \ldots, -(pq-p-q-k) \},\]
there is a Legendrian $L_i\in \mathcal{L}(\K_{(r,s)})$ with 
\[
\tb(L_i)=rs\,\, \text{ and } \rot(L_i)=i.
\]
\item There are two Legendrian knots $K_\pm\in
\mathcal{L}(\K_{(r,s)})$ that do not destabilize but have
\[
\tb(K_\pm)=rs- \left|\frac sr \cdot e_n\right| \,\, \text{ and }  \rot(K_\pm)=\pm r(n-1).
\]
\item All Legendrian knots in $\mathcal{L}(\K_{(r,s)})$ destabilize to
one of the $L_i$ or $K_\pm.$
\item\label{item52} Let $c=(\frac sr \cdot e_n^a- \frac sr\cdot
e_n)-1$.  For any $y\in\N \cup \{0\}$ and $x\leq c$ the Legendrian
$S_\pm^xS_\mp^y(K_\pm)$ is not isotopic to a stabilization of any of
the maximum Thurston-Bennequin invariant Legendrian knots in
$\mathcal{L}(\K_{(r,s)})$ or stabilizations of $K_\mp$.
\item Any two stabilizations of the non-destabilizable
Thurston-Bennequin invariant Legendrian knots in
$\mathcal{L}(\K_{(r,s)})$, except those mentioned in
item~(\ref{item52}), are Legendrian isotopic if they have the same
$\tb$ and $\rot$.
\end{enumerate}
\end{proposition}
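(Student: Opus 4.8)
The plan is to run the three-step classification strategy of Subsection~\ref{sssec:strategy}, exactly as in the proof of Proposition~\ref{prop:classify1}, with one essential structural change: because $\frac sr<e_n$, the distinguished knots $K_\pm$ now carry \emph{non-maximal} Thurston--Bennequin invariant and must be produced in Step~II as genuinely non-destabilizable knots rather than in Step~I. For Step~I the maximal invariant $\overline{\tb}(\K_{(r,s)})=rs$ is Lemma~\ref{lem:maxtb}, and the knots $L_i$ are built verbatim as in Proposition~\ref{prop:classify1}: each $L_i$ is a Legendrian divide on a convex torus of slope $\frac sr$ inside $N_1$ (such tori exist because $\frac sr>\frac1k\geq e_1$, so slope $\frac sr$ is realized by thinning $N_1$), with rotation number computed from the relative Euler class as before. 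The classification of all $\tb=rs$ representatives then follows from Theorem~\ref{thm:partially} and Corollary~\ref{cor:tori} together with the contactomorphism argument of Subsection~\ref{sssec:isotopy}.

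The construction of $K_\pm$ is where this case departs from Proposition~\ref{prop:classify1}. Since $\frac sr<e_n$, the slope $\frac sr$ is not realizable as a \emph{dividing} slope inside the non-thickenable torus $N_n^\pm$; instead I take $K_\pm$ to be a Legendrian \emph{ruling} curve of slope $\frac sr$ on $\partial N_n^\pm$, which has dividing slope $e_n$ and, since $\gcd(n,pq-p-q)=1$, exactly two dividing curves. Then Lemma~\ref{tbcomp}(2) gives $\tb(K_\pm)=rs-\left|\frac sr\cdot e_n\right|$, and Lemma~\ref{rcomp} combined with Lemma~\ref{candidates} (meridional rotation $\pm(n-1)$, longitudinal rotation $0$) gives $\rot(K_\pm)=\pm r(n-1)$. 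Non-destabilizability is then immediate from part~(1) of Proposition~\ref{curvesonpt2} with $x=0$: the knot $K_\pm$ sits only on tori of slope $e_n$ contained in the non-thickenable $N_n^\pm$, so it cannot be thickened to raise its Thurston--Bennequin invariant.

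For Step~II I would take an arbitrary $K$ with $\tb(K)<rs$ on a convex torus $T$ of dividing slope $t$ bounding a solid torus $S$ in the knot type $\K$, and argue by cases on $t$. If $t=\frac sr$ the intersection is inefficient and $K$ destabilizes; if $S$ thickens to $N_1$ I find a parallel torus of slope $\frac sr$ (inside $S$ when $t<\frac sr$, outside when $t>\frac sr$, using that thinning raises and thickening lowers the slope, together with $\frac sr>e_1$) and destabilize via the imbalance principle; and if $t\in[e_m,e_m^a)$ with $m\neq n$ I use Lemma~\ref{lem:basicrange} to produce a parallel torus of slope $e_m^a$ inside $S$ onto whose ruling curve $K$ destabilizes. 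The genuinely new case is $t\in[e_n,e_n^a)$, where $S$ thickens only to $N_n^\pm$: here I compare $K$ with the ruling curve $K_\pm$ on $\partial N_n^\pm$. Writing any $t\in(e_n,e_n^a)$ as a positive Farey combination of $e_n$ and $e_n^a$ shows $\left|\frac sr\cdot t\right|\geq\left|\frac sr\cdot e_n\right|$, with equality only at $t=e_n$ (all these intersection numbers share a sign because $\frac sr<e_n<t<e_n^a$); hence $\tb(K)\leq\tb(K_\pm)$, strictly unless $t=e_n$. Thus for $t>e_n$ an annulus from $K$ to $K_\pm$ carries an imbalance and $K$ destabilizes toward $K_\pm$, while for $t=e_n$ either $K$ is $K_\pm$ itself (efficient, two dividing curves) or it sits inefficiently and again destabilizes.

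Step~III mirrors Proposition~\ref{prop:classify1} but invokes Proposition~\ref{curvesonpt2} in place of Proposition~\ref{curvesonpt1}: stabilizations of the $L_i$ agreeing in $\tb$ and $\rot$ are Legendrian isotopic by the standard ruling-curve argument of \cite{EtnyreHonda01b,EtnyreHonda05}, while for $K_\pm$, part~(2) of Proposition~\ref{curvesonpt2} shows that $S_\pm^xS_\mp^y(K_\pm)$ lies on a torus thickening to $N_1$ once $x>c=(\frac sr\cdot e_n^a-\frac sr\cdot e_n)-1$, so it destabilizes to some $L_i$; and part~(1) shows that for $x\leq c$ it is confined to tori of slope $e_n$ inside $N_n^\pm$ and hence cannot be a stabilization of any $L_i$ or of $K_\mp$, which is item~(\ref{item52}). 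I expect the main obstacle to be precisely the $t\in[e_n,e_n^a)$ subcase of Step~II, where one must \emph{simultaneously} certify that $K_\pm$ do not destabilize (via non-thickenability of $N_n^\pm$ and Proposition~\ref{curvesonpt2}) while every other knot on a slope-$t$ torus does destabilize onto $K_\pm$ or an $L_i$; getting the intersection-number comparison $\left|\frac sr\cdot t\right|\geq\left|\frac sr\cdot e_n\right|$ and its equality case exactly right is what cleanly separates the non-destabilizable representatives from the rest.
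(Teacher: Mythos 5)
Your proposal follows essentially the same route as the paper's proof: the same three-step strategy, the $L_i$ constructed as Legendrian divides on slope-$\frac sr$ tori inside $N_1$, the $K_\pm$ as $(r,s)$-ruling curves on $\partial N_n^\pm$ with invariants computed via Lemmas~\ref{tbcomp}, \ref{rcomp} and~\ref{candidates}, the same case analysis on the dividing slope $t$ using Theorem~\ref{thm:partially} and Lemma~\ref{lem:basicrange} for destabilization, and Proposition~\ref{curvesonpt2} for both non-destabilizability and the stabilization comparisons in Step~III. The only differences are cosmetic (e.g.\ you correctly invoke part~(2) of Proposition~\ref{curvesonpt2} for the $x=c+1$ destabilization, where the paper's text cites Proposition~\ref{curvesonpt1}), so the argument matches the paper's.
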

\begin{proof}
We follow the standard approach to classifying Legendrian knots in a
given knot type outlined in Section~\ref{sec:pre}.

\medskip

\noindent
{\bf Step I ---} {\em Identify the maximal Thurston-Bennequin
invariant of the knot type and classify Legendrian knots realizing
this:} The computation of the maximal Thurston-Bennequin invariant is
done in Lemma~\ref{lem:maxtb}.

\smallskip

\noindent
{\em  $\bullet$ Construction of maximal Thurston-Bennequin invariant
knots in $\mathcal{L}(\K_{(r,s)})$:} This is identical to part of the
construction in Proposition~\ref{prop:classify1}. Let $N_1$ be a
standard neighborhood of the maximal Thurston-Bennequin invariant
Legendrian $(p,q)$-torus knot.  Inside $N_1$ there are solid tori
corresponding to stabilizing $L$, $(pq-q-p)-k$ times. The range of the
rotation numbers for the Legendrian $(p,q)$-torus knots represented by
these tori is $S=\{(pq-p-q-k), (pq-p-q-k)-2, \ldots,
-(pq-p-q-k)\}$. Denote these tori $S_l$ for $l\in S.$ Inside each
$S_l$ there are two tori $S_l^\pm$ that come from positively or
negatively stabilizing the Legendrian knot corresponding to $S_l$. In
the thickened torus $S_l-S_l^\pm$ there is a unique convex torus
$T_l^\pm$ with dividing slope $\frac sr$. Let $i=sl \pm m$ where
$m=r-sk>0$ is the remainder. Denote by $L_i$ a Legendrian divide on
$T_l^\pm$. We clearly have that $\tb(L_i)=rs$ and the computation in
the proof of Lemma~3.8 in \cite{EtnyreHonda05} gives that
$\rot(L_i)=i$.

\smallskip

\noindent
{\em  $\bullet$ Classification of maximal Thurston-Bennequin invariant
knots in $\mathcal{L}(\K_{(r,s)})$:} If $K\in \mathcal{L}(K_{(r,s)})$
with $\tb(K)=rs$ then $K$ sits on a convex torus with dividing slope
$\frac sr$. Theorem~\ref{thm:partially} and Corollary~\ref{cor:tori}
say that such a torus is one of the ones considered when constructing
the $L_i$. Thus, a by now standard argument, see
\cite{EtnyreHonda01b}, says the torus must be isotopic to one of the
ones used in those constructions from which we can also conclude that
$K$ is isotopic to one of $L_i$.

\medskip

\noindent
{\bf Step II ---} {\em Identify and classify the non-destabilizable, non-maximal
Thurston-Bennequin Legendrian knots in $\mathcal{L}(\K_{(r,s)})$ and
then show the rest destabilize to one of these or a maximal
Thurston-Bennequin Legendrian knot:} Let $N_m^\pm$ be the
non-thickenable solid tori representing $\K$ that were constructed in
Subsection~\ref{ss:nont}.

\smallskip

\noindent
$\bullet$ {\em Constructing the non-destabilizable Legendrian knots:}
Consider the two tori $N_n^\pm$. Let $K_\pm$ be a ruling curve of
slope $(r,s)$ on $\partial N_n^\pm$. It is clear that the twisting of
the contact planes along $K_\pm$ with respect to the framing of
$K_\pm$ coming from $\partial N_n^\pm$ is
\[
-\frac 12\left|K_\pm\cdot \Gamma_{\partial N_n^\pm}\right|= -\left|\frac sr \cdot e_n\right|.
\]
Thus the Thurston-Bennequin invariant (that is the twisting with
respect to the Seifert surface for $K_\pm$) is
\[
\tb(K_\pm)=rs-\left|\frac sr \cdot e_n\right|.
\]
Just as in Step I of the proof of Proposition~\ref{prop:classify1} we
compute
\[
\rot(K_\pm)=\pm r(n-1).
\]

\smallskip

\noindent
$\bullet$ {\em Proving all non-maximal Thurston-Bennequin invariant
knots either destabilize or have $\tb=rs-|\frac sr \cdot e_n|$ and sit
as a ruling curve on $\partial N_n^\pm$:} Let $L$ be a Legendrian knot
in $\L(\K_{(r,s)})$ with $\tb(L)<rs$. Let $S$ be a solid torus
representing the knot type $\K$ that contains $L$ in its boundary. We
know that the twisting of the contact planes with respect to $\partial
S$ is negative so we can make $\partial S$ convex without moving
$L$. If $L$ does not intersect the dividing curves $\Gamma_{\partial
S}$ minimally (for curves in their homology classes) then we will see
a bypass for $L$ on $\partial S$ and hence $L$ destabilizes. So we can
assume that $L$ intersects $\Gamma_{\partial S}$ minimally.

Now if the dividing slope $t$ of $\partial S$ is not $e_n$ then there
are three cases to consider. Case one is when $t\not\in [e_m, e_m^a)$
for any $m$. In this case Theorem~\ref{thm:partially} tells us that
$S$ can be thickened to a standard neighborhood of a maximal
Thurston-Bennequin knot in $\mathcal{L}(\K)$. Thus there is a convex
torus $T$ parallel to $\partial S$ (either inside $S$ or outside $S$
depending on $t$) with dividing slope $\frac sr.$ We can use an
annulus between $T$ and $\partial S$ with boundary on $L$ and a
Legendrian divide on $T$ to find a bypass for $L$ and hence $L$
destabilizes. Case two is when $t\in [e_m,e_m^a)$ for $m\not=n.$
Lemma~\ref{lem:basicrange} says that $|t\cdot \frac sr|$ is strictly
greater than $|\frac sr \cdot e_m^a|$ and $|\frac sr \cdot e_m^c|$
(since $t$ is on the interior of $[e_m^c, e_m^a]$). Thus there is a
torus $T$ in $S$ with dividing slope $e_m^a$. Using an annulus between
$K$ on $T$ and a $\frac sr$ ruling curve on $T$ we find a bypass for
$L$ and hence a destabilization. Finally in case three we consider
$t\in (e_n, e_n^a)$. In this case we have that $|\frac sr \cdot t| >
|\frac sr \cdot e_n|$.  We can thus use an annulus between $L$ on
$\partial S$ and a $\frac sr$ ruling on $\partial N_n^\pm$ to find a
bypass for $L$.

If $t=e_n$ then $L$ is a ruling curve on $\partial S$. If $S$ is not
$N_n^\pm$ then $S$ will thicken to $N_1$ and thus we can again
destabilize $L$ as in case one of the previous paragraph. So we see
that $L$ will destabilize unless it is a ruling curve on $N_n^\pm$. Of
course in this case $\tb(L)=rs-|\frac sr \cdot e_n|$.

\smallskip

\noindent
$\bullet$ {\em Proving the knots $K_\pm$ do not destabilize:} It
$K_\pm$ destabilized then by the above work they would be
stabilizations of one of the $L_i$. Thus $K_\pm$ could be put on some
convex torus other than $\partial N_n^\pm$, but this contradicts
Proposition~\ref{curvesonpt2}.

\smallskip

\noindent
$\bullet$ {\em Proving any Legendrian knots with $\tb=rs-|\frac sr
\cdot e_n|$ either destabilize or are isotopic to $K_\pm$:} This is
immediate from the work above and Corollary~\ref{cor:tori}.

\medskip

\noindent
{\bf Step III ---} {\em Determine which stabilizations of the $K_\pm$
and $L_i$ are Legendrian isotopic:} The stabilizations of the $L_i$
all become Legendrian isotopic whenever they have the same
Thurston-Bennequin invariants as discussed in Step III of the proof of
Proposition~\ref{prop:classify1}.

From Proposition~\ref{curvesonpt2} we know that
$S_\pm^xS_\mp^y(K_\pm)$, for any $y\in\N\cup\{0\}$ and $x\leq c$, can
be put only on the convex torus $\partial N_n^\pm$. Thus it is clear
that $S_\pm^xS_\mp^y(K_\pm)$ is not isotopic to any stabilization of a
$L_i$ or of $K_\mp$.

We also know from Propositon~\ref{curvesonpt1} that
$S_\pm^{c+1}S_\mp^y(K_\pm)$ can be put on a convex torus that bounds a
solid torus that thickens to $N_1$ and thus is a stabilization of the
$L_i$.
\end{proof}

\begin{proof}[Proof of Theorem~\ref{qual3} and Theorem~\ref{main} ]
Theorem~\ref{main} is an immediate consequence of
Propositions~\ref{prop:classify1} and~\ref{prop:classify2} together
with Theorems~\ref{poscableclass}, \ref{negcableclass}
and~\ref{intermediatecableclass}. Theorem~\ref{qual3} is clear from
the statement of Theorem~\ref{main}.
\end{proof}

\begin{proof}[Proof of Theorem~\ref{qual4} and Theorem~\ref{maint}]
Theorem~\ref{tequivaletsl} tells us that the classification of
transverse knots is equivalent to the classification of Legendrian
knots up to negative stabilization. Thus the Theorem~\ref{maint} is a
corollary of Theorem~\ref{main}. Theorem~\ref{qual4} follows from
Theorem~\ref{maint} once one observes that that if we choose $\frac sr
= me_k + n e_k^a$ (where the addition is done as on the Farey
tessellation), then $\frac sr \cdot \frac{1}{pq-p-q} > n$.  As a
result, the non-destabilizable transverse knot will have self-linking
number at least $2n$ less than maximal; furthermore, it will take
$\frac sr \cdot e_k^a = m$ stabilizations before it becomes isotopic
to a stabilization of the maximal self-linking number transverse knot.
\end{proof}

\section{Cables of the trefoil}
\label{non-simple cables trefoil}
We are now ready to classify non-Legendrian simple cables of the
positive trefoil knot.
\begin{proposition}\label{class23}
Let $\K$ be the positive trefoil knot. Suppose that $n\geq 1$ and $(r,s)$ 
is a pair of relatively prime integers such that $\frac sr\in [n,n+1)$. Then 
the $(r,s)$-cable of $\K$, $\K_{(r,s)}$, is not Legendrian simple and
Legendrian knots in this knot type have the following classification.
\begin{enumerate}
\item The maximal Thurston-Bennequin number is
$\overline{\tb}(\K_{(r,s)})=rs$.
\item There are $n$ Legendrian knots $L_\pm^j\in
\mathcal{L}(\K_{(r,s)}), j=1,\ldots, n$, with
\[
\tb(L_\pm^j)=rs\,\, \text{ and } \rot(L_\pm^j)=\pm(s-r).
\]
\item If $\frac sr \not=n$ then there are two Legendrian knots
$K_\pm\in \mathcal{L}(\K_{(r,s)})$ that do not destabilize but have
\[
\tb(K_\pm)=rs- \left|\frac sr \cdot (n+1)\right| \,\, \text{ and }  \rot(K_\pm)=\pm \left(s-r+ \left|\frac sr \cdot (n+1)\right|\right).
\]
\item All Legendrian knots in $\mathcal{L}(\K_{(r,s)})$ destabilize to
one of the $L^j_\pm$ or $K_\pm$.
\item\label{item511} Let $c=r-1$.  For any $y\in\N$ and $x\leq c$ the
Legendrian $S_\pm^xS_\mp^y(L^j_\pm)$ is not isotopic to a
stabilization of any of the other $L_\pm^{j}$'s the $L_\mp^j$, $K_\pm$
or $K_\mp$.
\item\label{item512} Let $c'=r-\left|\frac sr \cdot
(n+1)\right|-1$. For any $y\in\N\cup\{0\}$ and $x\leq c'$ the
Legendrian $S_\pm^xS_\mp^y(K_\pm)$ is not isotopic to a stabilization
of any of the $L_\pm^{j}$'s or $K_\mp$.
\item Any two stabilizations of the $L_\pm^j$ or $K_\pm$, except those
mentioned in item~(\ref{item511}) and~(\ref{item512}), are Legendrian
isotopic if they have the same $\tb$ and $\rot$.
\end{enumerate}
\end{proposition}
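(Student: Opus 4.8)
The plan is to follow the three-step classification strategy of Subsection~\ref{sssec:strategy}, mirroring the proofs of Propositions~\ref{prop:classify1} and~\ref{prop:classify2}; the one genuinely new feature is that for the trefoil $pq-p-q=1$, so the exceptional slopes are the integers $e_k=k$ with $e_k^a=\infty$, and by Lemma~\ref{lem:basicrange} the intervals of influence $J_k=(k-1,\infty)$ are \emph{nested} rather than disjoint. Consequently a slope $\frac sr\in[n,n+1)$ lies in $I_k=[e_k,\infty)$ for every $k=1,\dots,n$ simultaneously, which is the source of the multiplicity in item~(2).

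First I would carry out Step~I. The equality $\overline{\tb}(\K_{(r,s)})=rs$ is Lemma~\ref{lem:maxtb}. To produce the maximal representatives, note that since $\frac sr\geq k$ for each $k\leq n$, each of the solid tori $N_1,N_2^\pm,\dots,N_n^\pm$ from Subsection~\ref{ss:nont} contains a convex torus of dividing slope $\frac sr$; a Legendrian divide there has $\tb=rs$, and the relative Euler class computation of Proposition~\ref{prop:classify1}, using $\rot(\partial D')=\pm(k-1)$ and $\rot(\partial\Sigma')=0$ from Lemma~\ref{candidates}, gives $\rot=\pm(s-r)$ in every case. Reading off the signs, $N_1$ contributes one divide of each sign while $N_k^+$ and $N_k^-$ ($2\leq k\leq n$) contribute one of sign $+$ and one of sign $-$ respectively, for a total of $n$ divides $L^j_\pm$ of each sign, establishing item~(2). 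For the classification, any $K$ with $\tb(K)=rs$ sits on a convex torus of slope $\frac sr$ with two dividing curves; Corollary~\ref{cor:tori}(1) says there are exactly $2n$ such tori, and the complement-uniqueness argument of Subsection~\ref{sssec:isotopy} identifies $K$ with one of the $L^j_\pm$. Their pairwise distinctness at maximal $\tb$ follows from Proposition~\ref{curvesonpt1} taken with $x=y=0$: each $L^j_\pm$ sits only on tori thickening to $N_j^\pm$, and distinct $N_j^\pm$ are rigid by Theorem~\ref{thm:nonthick}.

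Next, Step~II. When $\frac sr\neq n$ we have $\frac sr\in(e_{n+1}^c,e_{n+1})=(n,n+1)$, so taking $K_\pm$ to be an $(r,s)$ ruling curve on $\partial N_{n+1}^\pm$ gives $\tb(K_\pm)=rs-|\frac sr\cdot(n+1)|=rs-|r(n+1)-s|$, and the same Euler-class computation gives $\rot(K_\pm)=\pm rn=\pm(s-r+|r(n+1)-s|)$; non-destabilizability is Proposition~\ref{curvesonpt2}. To show every other Legendrian knot destabilizes to some $L^j_\pm$ or $K_\pm$, I would put such a $K$ on a convex torus $T=\partial S$ in the knot type $\K$, made convex rel $K$ since its twisting is negative, and run the case analysis of Proposition~\ref{prop:classify2} on the dividing slope $t$ of $T$: if $t=\frac sr$ then $K$ meets $\Gamma_T$ inefficiently and a bypass destabilizes it; if $t\neq\frac sr$, then Theorem~\ref{thm:partially} and Lemma~\ref{lem:basicrange} produce a parallel convex torus of slope $\frac sr$ (inside or outside $S$) against which an annulus and Imbalance Principle argument yields a destabilizing bypass, the lone exception being when $K$ is a ruling curve on $N_{n+1}^\pm$, i.e. $K=K_\pm$.

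Finally, Step~III, which I expect to be the crux. That stabilizations of the $L^j_\pm$ (and of $K_\pm$) with equal $\tb$ and $\rot$ eventually coincide is the standard argument: once stabilized enough to lie on a torus thickening to $N_1$, they are realized as ruling curves on the boundary of one common standard neighborhood, giving item~(7). The non-isotopy assertions, items~(5) and~(6), are where the nesting must be controlled, and they come from Propositions~\ref{curvesonpt1} and~\ref{curvesonpt2} once one evaluates the thresholds with the trefoil data $e_m^a=\infty$ and $\frac sr\cdot e_m^a=r$: for a divide this gives the cutoff $|\frac sr\cdot e_n^a|=r$, hence $c=r-1$, and for the ruling curve $K_\pm$ it gives $(\frac sr\cdot e_{n+1}^a)-(\frac sr\cdot e_{n+1})=r-|r(n+1)-s|$, hence $c'=r-|r(n+1)-s|-1$. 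The hard part will be justifying that, in the nested regime, a partially-stabilized $L^j_\pm$ cannot slip onto a torus belonging to a \emph{different} $N_{j'}^\pm$ before it has acquired enough positive stabilizations to pass out to $N_1$; this is exactly the content obtained by applying Proposition~\ref{curvesonpt1} inside each $N_j^\pm$ and invoking the rigidity of the non-thickenable tori, so that the $2n$ stabilization towers over the $L^j_\pm$, together with the two over $K_\pm$, stay mutually disjoint below the stated thresholds.
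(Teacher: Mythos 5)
Your proposal is correct and follows essentially the same route as the paper: the three-step strategy with Legendrian divides on the $2n$ convex tori of slope $\frac sr$ inside $N_1,N_2^\pm,\dots,N_n^\pm$, ruling curves on $\partial N_{n+1}^\pm$ for $K_\pm$, the relative Euler class computation for rotation numbers, and Propositions~\ref{curvesonpt1} and~\ref{curvesonpt2} (with $e_k^a=\infty$, so $|\frac sr\cdot e_k^a|=r$) to pin down the stabilization thresholds $c=r-1$ and $c'=r-|r(n+1)-s|-1$. The point you flag as the crux --- that partially stabilized $L^j_\pm$ stay confined to partially/non-thickenable tori in $N_j^\pm$ and so cannot migrate between the nested towers --- is exactly how the paper handles Step~III.
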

\begin{proof}
We follow the standard approach to classifying Legendrian knots used
above.

\medskip

\noindent
{\bf Step I ---} {\em Identify the maximal Thurston-Bennequin
invariant of the knot type and classify Legendrian knots realizing
this:} The computation of the maximal Thurston-Bennequin invariant is
done in Lemma~\ref{lem:maxtb}.

\smallskip

\noindent
{\em $\bullet$ Construction of maximal Thurston-Bennequin invariant
knots in $\mathcal{L}(\K_{(r,s)})$:} This is identical to the
construction from the last section. Let $N_1$ be a standard
neighborhood of the maximal Thurston-Bennequin invariant Legendrian
positive trefoil knot.  Inside $N_1$ there are two solid tori $S^\pm$
that come from positively or negatively stabilizing the Legendrian
knot corresponding to $N_1$. In the thickened torus $N_1-S^\pm$ there
is a unique convex torus $T^\pm$ with dividing slope $\frac sr$. Let
$L^1_\pm$ be a Legendrian divide on $T^\pm$.  We clearly have that
$\tb(L_i)=rs$ and the computation in the proof of Lemma~3.8 in
\cite{EtnyreHonda05} gives that $\rot(L_i)=\pm(s-r)$.

Now consider the non-thickenable tori $N_j^\pm$ from
Theorem~\ref{thm:partially}. For $j\leq n$ we can find a convex torus
$T_j^\pm$ with dividing slope $\frac sr.$ Let $L^j_\pm$ be a
Legendrian divide on $T_j^\pm$. Again it is clear that
$\tb(K_\pm)=rs.$ Recall that from Lemma~\ref{rcomp} we know that
\[
\rot(L^j_\pm)=r\rot(\partial D) + s\rot(\partial \Sigma)
\]
where $D$ is a meridional disk for $T_j^\pm$ with Legendrian boundary
and $\Sigma$ is a surface outside the solid torus $T_j^\pm$ bounds
with Legendrian boundary on $T_j^\pm.$ If $D'$ and $\Sigma'$ are the
corresponding surfaces for $\partial N_j^\pm$ then we know from
Lemma~\ref{candidates} that $\rot (\partial D')=\pm(j-1)$ and
$\rot(\partial \Sigma')=0.$ Thus the rotation number of an
$(r,s)$-ruling curve on $\partial N_n^\pm$ is $\pm r(j-1)$. To compute
the rotation number for the Legendrian divide on $T_j^\pm$ we use the
classification of tight contact structures on thickened tori, as given
in \cite{Honda00a}, and the fact that $N_j^\pm$ is universally
tight. In particular, we can compute the relative Euler class $e$ of
the thicken torus cobranded by $N_j^\pm$ and $T^\pm$:
\[
P.D.(e)=\pm( (r,s) -(1, j)) \in H_1(T^2\times I;\Z),
\]
where $P.D.$ stands for the Poincar\'e Dual and we are using the basis
for $H_1$ given by the meridian and longitude.  We can use this to
compute the difference between the rotation number of the $(r,s)$
curve on $\partial N_j^\pm$ and on $T_j^\pm$ which is
$\pm(r(s-j)-s(r-1)$. Thus we have that $\rot(L^j_\pm)=\pm (s-r)$.

\smallskip

\noindent
{\em $\bullet$ Classification of maximal Thurston-Bennequin invariant
knots in $\mathcal{L}(\K_{(r,s)})$:} If $K\in \mathcal{L}(K_{(r,s)})$
with $\tb(K)=rs$ then $K$ sits on a convex torus with dividing slope
$\frac sr$. Theorem~\ref{thm:partially} and Corollary~\ref{cor:tori}
say that such a torus is one of the ones considered when constructing
the $L^j_\pm$. Thus, a by now standard argument, see
\cite{EtnyreHonda01b}, says the torus must be isotopic to one of the
ones used in those constructions from which we can also conclude that
$K$ is isotopic to one of the $L^j_\pm$.

\medskip

\noindent
{\bf Step II ---} {\em Identify and classify the non-destabilizable, non-maximal
Thurston-Bennequin Legendrian knots in $\mathcal{L}(\K_{(r,s)})$ and
then show the rest destabilize to one of these or a maximal
Thurston-Bennequin Legendrian knot:} Let $N_m^\pm$ be the
non-thickenable solid tori representing $\K$ that were constructed in
Subsection~\ref{ss:nont}.

\smallskip

\noindent
$\bullet$ {\em Constructing the non-destabilizable Legendrian knots:}
If $\frac sr=n$ then there are no non-destabilizable knots. Otherwise
consider the two tori $N_{n+1}^\pm$. Let $K_\pm$ be a ruling curve of
slope $(r,s)$ on $\partial N_{n+1}^\pm$. It is clear that the twisting
of the contact planes along $K_\pm$ with respect to the framing of
$K_\pm$ coming from $\partial N_n^\pm$ is
\[
-\frac 12\left|K_\pm\cdot \Gamma_{\partial N_{n+1}^\pm}\right|= -\left|\frac sr \cdot (n+1)\right|.
\]
Thus the Thurston-Bennequin invariant (that is the twisting with
respect to the Seifert surface for $K_\pm$) is
\[
\tb(K_\pm)=rs-\left|\frac sr \cdot (n+1)\right|.
\]
Just as in the proof of Proposition~\ref{prop:classify1} we compute
\[
\rot(K_\pm)=\pm \left(s-r-\left|\frac sr \cdot (n+1)\right|\right),
\]
or, more simply, $\rot(K_\pm) = \pm rn$.

\noindent
$\bullet$ {\em Proving all non-maximal Thurston-Bennequin invariant
knots either destabilize or have $\tb=rs-|\frac sr \cdot (n+1)|$ and
sit as a ruling curve on $\partial N_{n+1}^\pm$:} Assume that $\frac
sr\not=n$ (since otherwise there are no-non-destabilizable knots). Let
$L$ be a Legendrian knot in $\L(\K_{(r,s)})$ with $\tb(L)<rs$. Let $S$
be a solid torus representing the knot type $\K$ that contains $L$ in
its boundary. We know that the twisting of the contact planes with
respect to $\partial S$ is negative so we can make $\partial S$ convex
without moving $L$. If $L$ does not intersect the dividing curves
$\Gamma_{\partial S}$ minimally (for curves in their homology classes)
then we will see a bypass for $L$ on $\partial S$ and hence $L$
destabilizes. So we can assume that $L$ intersect $\Gamma_{\partial
S}$ minimally.

Now if the dividing slope $t$ of $\partial S$ is not $n+1$ then there
are three cases to consider. If $t<0$ then $S$ thickens to $N_1$ and
in particular there is a convex torus with dividing slope $\frac sr$
either inside or outside $S$. We may use an annulus between $L$ and a
dividing curve on this torus to destabilize $L$. If $t>n+1$, then $S$
contains a solid torus $S'$ with convex boundary having infinite
dividing slope. Lemma~\ref{lem:basicrange} guarantees that $|t\cdot
\frac sr|$ is greater than $|\frac 10\cdot \frac sr|$. Thus we may
take a convex annulus from $L$ to a ruling curve on $\partial S'$ and
use the Imbalance Principle to find a bypass, and hence a
destabilization, for $L$. Finally if $t\in(n,n+1)$, then there is a
torus with dividing slope $\frac sr$ either inside or outside of $S$,
and we may use an annulus between $L$ and a dividing curve on this
torus to destabilize $L$.

If $t=n+1$ then $L$ is a ruling curve on $\partial S$. If $S$ is not
$N_{n+1}^\pm$ then $S$ will thicken to $N_k$ for some $k\geq n$ and
thus we can again destabilize $L$ as in case one of the previous
paragraph. So we see that $L$ will destabilize unless it is a ruling
curve on $N_{n+1}^\pm$. Of course in this case $\tb(L)=rs-|\frac sr
\cdot (n+1)|$.

\smallskip

\noindent
$\bullet$ {\em Proving the knots $K_\pm$ do not destabilize:} If
$K_\pm$ destabilized then by the above work they would be
stabilizations of one of the $L^j_\pm$. Thus $K_\pm$ could be put on
some convex torus other than $\partial N_n^\pm$, but this contradicts
Proposition~\ref{curvesonpt2}.

\smallskip

\noindent
$\bullet$ {\em Proving any Legendrian knots with $\tb=rs-|\frac sr
\cdot (n+1)|$ either destabilize or are isotopic to $K_\pm$:} This is
immediate from the work above and Corollary~\ref{cor:tori}.

\medskip

\noindent
{\bf Step III ---} {\em Determine which stabilizations of the $K_\pm$
and $L_i$ are Legendrian isotopic:} The stabilizations of the $L_i$
are shown to be Legendrian isotopic when they have the same classical
invariants in the usual fashion as discussed in the proof of
Proposition~\ref{prop:classify1}.

From Proposition~\ref{curvesonpt1} we know that
$S_\pm^xS_\mp^y(L^j_\pm)$, for any $y\in\N \cup \{0\}$ and $x\leq c$,
can be put only on the convex torus $\partial N_j^\pm$. Thus it is
clear that $S_\pm^xS_\mp^y(K_\pm)$ is not isotopic to any
stabilization of any of the other $L^j_\pm$, $K_\pm$ or
$K_\mp$. Similarly if $\frac sr\not=n$ then for $x\leq c'$,
Proposition~\ref{curvesonpt2} says that $S_\pm^xS_\mp^y(K_\pm)$ can
only be put on the convex torus $\partial N_{n+1}^\pm$ and hence is
not isotopic to any stabilization of the $L^j_\pm$ or to $K_\mp$.

We also know from Propositons~\ref{curvesonpt1} and ~\ref{curvesonpt2}
that $S_\pm^{c+1}S_\mp^y(L^j_\pm)$ and $S_\pm^{c'+1}S_\mp^y(K_\pm)$
can be put on a convex torus that bounds a solid torus that thickens
to $N_1$ and thus is a stabilization of the $L^1_\pm$.
\end{proof}

\begin{proof}[Proof of Theorem~\ref{qual1} and Theorem~\ref{main23}]
Theorem~\ref{main23} simply collects the results from
Proposition~\ref{class23} and Theorems~\ref{poscableclass}
and~\ref{negcableclass}. For Theorem~\ref{qual1} we can choose $\frac
sr= \frac{kn+m(n-1)}{k+m}$. One may easily check using
Theorem~\ref{main23} that $\mathcal{L}(\K_{(r,s)})$ contains $n-1$
Legendrian knots $L_1,\ldots, L_{n-1}$ with maximal Thurston-Bennequin
invariant (which will be $rs$ in this case) and rotation number
$s-r$. It also contains one non-destabilizable knot $L'$ with $\tb=
rs-|\frac sr\cdot n|=rs-m$ and rotation number $s-r+m$. Moreover, one
must stabilize $L'$ positively $k$ times before it becomes isotopic to
a stabilization of one of the $L_i$.
\end{proof}

\begin{proof}[Proof of Theorem~\ref{qual2} and Theorem~\ref{main23t}]
Theorem~\ref{tequivaletsl} tells us that the classification of
transverse knots is equivalent to the classification of Legendrian
knots up to negative stabilization. Thus the Theorem~\ref{main23t} is
a corollary of Theorem~\ref{main23}. Turning to Theorem~\ref{qual2} we
see that choices similar to those in the previous proof yield the
desired result.
\end{proof}

\def\cprime{$'$} \def\cprime{$'$}

\end{document}